\documentclass[3p,times]{elsarticle}

\usepackage[hang,flushmargin]{footmisc}
\usepackage{array}
\usepackage{longtable}
\usepackage{changepage}
\usepackage{amsfonts}
\usepackage{amsmath}
\usepackage{amssymb}
\usepackage{mathrsfs}
\usepackage{amscd}
\usepackage{mathtools}
\usepackage{here}
\usepackage{tikz}

\usetikzlibrary{calc}

\usepackage{tikz-cd}
\usepackage[all]{xy}
\usepackage{xcolor}
\usepackage{bbm}
\usepackage{colortbl}
\usepackage{selinput}
\usepackage{lipsum}
\usepackage{lineno}
\usepackage{hyperref}
\hypersetup{colorlinks=true,
    linkcolor=blue,
    filecolor=magenta,
    urlcolor=cyan,
    pdftitle={Overleaf Example},
    pdfpagemode=FullScreen,
    }
\urlstyle{same}

\makeatother
\newenvironment{proof}{\noindent{\bf Proof.}}
{\noindent \ \hfill$\Box$\par}

\newtheorem{theorem}{Theorem}[subsection]
\newtheorem{thm0}{Theorem}[section]

\newtheorem{pro}[theorem]{Proposition}
\newtheorem{lem}[theorem]{Lemma}
\newtheorem{exa}[theorem]{Example}

\newtheorem{cor}[theorem]{Corollary}

\newtheorem{reminner}{Remark}[section]

\newenvironment{rem}
  {\begin{reminner}\upshape}
  {\hfill\(\triangleleft\)\end{reminner}}

\newtheorem{pro0}{Proposition}[section]
\newtheorem{cor0}{Corollary}[section]

\newtheorem{rem0inner}{Remark}[section]

\newenvironment{rem0}
  {\begin{rem0inner}\upshape}
  {\hfill\(\triangleleft\)\end{rem0inner}}

\newtheorem{lem0}{Lemma}[section]

\newtheorem{definition0}{Definition}[section]

\newcommand{\hc}{\circ}
\newcommand{\n}{\{ }
\newcommand{\nn}{\} }
\newcommand{\ca}{\eta}
\newcommand{\Fun}{\mathrm{Fun}}
\newcommand{\hCW}{\mathbf{hCW}_{(p)}^{0}}

\newcommand{\Vect}{\mathbf{Vect}_{\z/p}^{0}}

\newcommand{\Coalg}{\mathbf{Coalg}_{\z/p}}

\newcommand{\Hop}{\mathbf{Hop}_{\z/p}}

\newcommand{\cg}{\sigma}
\newcommand{\lt}{\varepsilon}
\newcommand{\ch}{ \bar{\nu}  }

\newcommand{\ck}{\zeta}

\newcommand{\fe}{F^{(2)}}
\newcommand{\fs}{F^{(3)}}
\newcommand{\dq}{\bigvee_{n\geq2}\mathrm{SQ}_{n}^{\mathrm{max}} (C_{f})}

\newcommand{\cq}{\bar{\mu}}

\newcommand{\kj}{\phi}

\newcommand{\w}{\omega}

\newcommand{\wq}{\infty}
\newcommand{\af}{\alpha}
\newcommand{\sk}{\mathrm{sk}}
\newcommand{\Ker}{\mathrm{Ker}}
\newcommand{\cok}{\mathrm{Coker}}
\newcommand{\ord}{\mathrm{ord}}

\newcommand{\m}{\;\mathrm{mod}\,}
\newcommand{\pa}{\partial}

\newcommand{\x}{\langle}
\newcommand{\xx}{\rangle}

\newcommand{\e}{\iota}

\newcommand{\dyd}{\supseteq}
\newcommand{\xyd}{\subseteq}

\newcommand{\ty}{ \equiv}

\newcommand{\lo}{\Omega}
\newcommand{\tg}{\cong}

\newcommand{\jia}{\oplus}
\newcommand{\z}{\mathbb{Z}}

\newcommand{\cpt}{\mathbb{C}P^{2}}

\newcommand{\s}{\Sigma}

\newcolumntype{Y}{>{\centering\arraybackslash}X}

\usepackage{booktabs}

\begin{document}
\begin{frontmatter}
 
\date{}
\title{A ``Periodicity'' Phenomenon of  the Attaching Map of the  Suspended  Two-Cell  Complex}

\author[1]{Juxin Yang}

\author[2]{Fengchun Lei}

\author[3]{Jingyan Li}

\author[4]{Jie Wu}

\address[1]{School of Mathematical Sciences, Dalian University of Technology, Dalian 116024,
China, yangjuxin@bimsa.cn }

\address[2]{School of Mathematical Sciences, Dalian University of Technology, Dalian 116024,
China; Beijing Key Laboratory of Topological Statistics and Applications for Complex Systems, Beijing Institute of Mathematical Sciences and Applications, Beijing 101408, China.  leifengchun@bimsa.cn}

\address[3]{Beijing Key Laboratory of Topological Statistics and Applications for Complex Systems, Beijing Institute of Mathematical Sciences and Applications, Beijing 101408, China.  jingyanli@bimsa.cn}
\address[4]{School of Mathematical Sciences, Hebei Normal University, Shijiazhuang 050024, China; Beijing Key Laboratory of Topological Statistics and Applications for Complex Systems, Beijing Institute of Mathematical Sciences and Applications, Beijing 101408, China. wujie@bimsa.cn}

\begin{abstract}  In this paper, we determine the 3-cell skeleton of $F$, where $F$ is the homotopy fiber of the canonical pinch map from a suspension of a simply-connected   2-cell complex onto a sphere. The main result is stated $p$-locally:  for  $p=2$, and for   $p\geq5$ under an additional assumption. The proof is based on Selick-Wu's $\mathrm{A}^{\mathrm{min}}$-theory and the machinery of the Eilenberg-Moore spectral sequence. As an application,  we compute the 2-primary component of   $\pi_{18} (\s^{3}\cpt)$, a homotopy group outside the metastable range.
\end{abstract}
\begin{keyword} unstable homotopy group, $\mathrm{A}^{\mathrm{min}}$-theory, Eilenberg-Moore spectral sequence, Gray's relative James construction
\end{keyword}

\end{frontmatter}

\tableofcontents

\section{Introduction} The development of homotopy theory has been strongly driven by the problem of computing homotopy groups. This problem has produced many important tools and ideas, such as the Toda bracket, the Adams spectral sequence, the EHP spectral sequence, motivic homotopy theory, and Goodwillie calculus. These have influenced not only topology, but also many other areas of mathematics, including homological algebra, algebraic geometry, K-theory, and category theory.

Two-cell complexes form one of the most elementary classes of finite CW complexes beyond spheres. The projective planes  $\mathbb{R}P^{2}$, $\mathbb{C}P^{2}$, $\mathbb{H}P^{2}$ and $\mathbb{O}P^{2}$ provide basic examples: each admits a CW decomposition with two nontrivial cells.  Despite the simple cell structure, the homotopy theory of two-cell complexes is already highly nontrivial, since the attaching map can encode substantial unstable information.  Thus two-cell complexes provide a natural testing ground for methods designed to study how cell attachments affect homotopy groups.

Suspended two-cell complexes occur naturally in many contexts.   For example, Moore spaces $M(\z/p^{r},n)$ with $n\geq2$ are  two-cell complexes.  Moreover,  certain skeletons of Lie groups and homogeneous spaces often have the form of two-cell complexes, or are closely related to such spaces. Understanding the homotopy groups of such spaces is  a basic problem in unstable homotopy theory.

However, even for a suspended two-cell complex, the homotopy groups are rarely determined by the homotopy groups of the two spheres appearing in its cell decomposition.  The attaching map of a suspended two-cell complex contributes wildly through  the homotopy fiber.  Consequently, computing the homotopy groups of suspended two-cell complexes requires tools that can detect how the original attaching map propagates into higher unstable phenomena.

In this paper, we aim to provide effective tools for computing the unstable homotopy groups of suspended 2-cell complexes  $\pi_{*} (\s (S^{n}\cup e^{n+k+1}))$.

Let $f\in\pi_{n+k} (S^{n})$ with $n\ge 2$ and $k\ge 0$. Let $F$ be the homotopy fiber of the pinch map
\[
\Sigma C_f= S^{n+1}\cup_{\Sigma f} e^{n+k+2} \longrightarrow S^{n+k+2},
\]
which pinches the bottom $S^{n+1}$ to the basepoint.\footnote{
When $k=0$ and working $p$-locally, we additionally assume that
the degree of the  map $f\colon S^{n}\longrightarrow S^{n}$ is  $p^{a}$ ($a\in\z_{+}$) or 0, so as to exclude the contractible case $C_{f}=S^{n}\cup_{f} e^{n+1}\simeq*$.} Then we have a  fiber sequence
\begin{equation}\notag \label{jhaaa}
     F \longrightarrow \s C_{f} \xrightarrow[]{pinch} S^{n+k+2}.
\end{equation} 
\noindent This provides an elementary tool for analyzing  homotopy groups of the suspended 2-cell complexes $\pi_{*} (\s C_{f})$. Indeed, this was already understood in the 1950s or earlier (see James \cite{james} for example): information on the relative homotopy groups 
$\pi_{*} (S^{n+1}\cup_{\Sigma f} e^{n+k+2},\, S^{n+1})$
implies that
$$
\pi_{i} (S^{n+1}\cup_{\Sigma f} e^{n+k+2})
\cong \pi_{i} (S^{n+1})\quad (\forall\, i<n+k+1)
\quad \text{and}\quad
\pi_{n+k+1} (S^{n+1}\cup_{\Sigma f} e^{n+k+2})
\cong \pi_{n+k+1} (S^{n+1})/\langle \Sigma f\rangle .
$$ \noindent For $i>n+k+1$, the homotopy groups $\pi_{*} (F)$  play essential roles in computing $\pi_{i} (\s C_{f})$.  In 1973,   B. Gray \cite{Gray} gave a combinatorial model for $F$, which is  effective for analyzing the homotopy-theoretic properties of 
$F$, and which also completely determines the homology of $F$. For any CW complex $X$ and any $i\in\z_{+}$, there exists a canonical group isomorphism  $$\pi_{i} (X)\tg\pi_{i} (\mathrm{sk}_{i+1} (X)),$$ where  $\mathrm{sk}_{i+1} (X)$ denotes the skeleton of dimension $i+1$.  Hence, the skeletons of the homotopy fiber $F$ are central to the computation.

Denote by $F^{(m)}$ the $m$-cell skeleton of $F$. The homotopy type of $F^{(2)}$ is explicitly known by Gray \cite{Gray}. However, once $m\geq3$, the homotopy type of $F^{(m)}$  becomes difficult to determine, whether integrally or $p$-locally. Thus the first genuinely new obstruction occurs in the attaching map of the third cell.  In this paper, we determine $F^{(3)}$ after localization at the prime $p$, which serves as a tool for computing the $p$-primary components of $\pi_{*} (\s C_{f})$.

 \indent To state the main theorem,  localize spaces at  a prime $p$ where   either
    \begin{itemize}
        \item[$\bullet$]  $p=2$,
        \item[$\bullet$] or  $p\geq5$ and $n+k$ is additionally assumed to be odd. 
    \end{itemize}
   
   \noindent

   \noindent We say that $H^{*} (C_{f};\z/p)$
 is a trivial algebra if all its cup products of positive-degree elements vanish.

 Our main theorem is as follows.

\medskip

Informally, the theorem identifies the  attaching map of the homotopy fiber $F$ in terms of the original attaching map $f$. More precisely, the 3-cell skeleton is given by 
\[
F^{(3)}=\fe\cup_{\beta}e^{3n+2k+3};
\]
the attaching map $\beta$ is congruent, up to an explicit indeterminacy, to the naturally induced map
$j_{3}j_{4}\circ \Sigma^{2n+k+2}f$. Thus the theorem should be viewed as saying that the third cell of $F$ is controlled by a suitable iterated suspension of the original attaching map $f$. For more details on the notation used in the following theorem, see Remark~\ref{Fdll} and the notation table in Subsection~\ref{hbg1}.

\begin{thm0}\label{hao1} (\textbf{Main Theorem}, see~Theorem~\ref{zdlzm}).  Under the above $p$-localization hypothesis, suppose that the cohomology algebra  $H^{*} (C_{f};\z/p)$ is a trivial algebra, where $C_{f}=S^{n}\cup_{f}e^{n+k+1}\not\simeq*$ with $n\geq2,\,k\geq0$. Then, the homotopy fiber $F$  of  the pinch map  $ \s C_{f}=S^{n+1}\cup _{\s f} e^{n+k+2} \xrightarrow[]{pinch} S^{n+k+2}$ is given by 

\[F=\fe\cup_{\beta}  e^{3n+2k+3}\cup e^{4n+3k+4}  \cdots,\;\;\big(\beta\in\pi_{3n+2k+2} (\fe)\big).\]  
\noindent
For its attaching map $\beta$,  the following relation holds:
$$\beta\ty j_{3}  j_{4}\hc \s^{2n+k+2}f\m \;I\hc\s^{2n+k+2}f,$$  \noindent for a $\z_{(p)}$-submodule  $I\xyd \pi_{3n+k+2} (\fe)$    whose  precise description is given in Theorem~\ref{zdlzm}. \end{thm0}

\begin{rem0}\upshape(\textbf{On the Indeterminacy}). We note that  the principal term
$j_{3}j_{4}\hc\Sigma^{2n+k+2}f$
and the indeterminacy term  $I\hc \s^{2n+k+2}f$ come from rather different geometric sources.    The indeterminacy term   can not swamp the principal term. In practice, the indeterminacy term  has no effect on the computability of the homotopy groups $\pi_{*} (\s C_{f})$ when $\beta$ is needed.   See Remark \ref{midrem} for more details.  
\end{rem0}\indent  The attaching map of the third cell of   the homotopy fiber  is therefore determined, up to explicit indeterminacy, by an iterated suspension of the original attaching map $ f$. 
In other words, the iterated suspensions of the original attaching map exhibit a ``periodicity$"$ phenomenon.

The following theorem is an application of the main theorem.    When writing  homotopy group isomorphisms with generating sets,  we assume that each listed generator corresponds to the indicated direct summand in the obvious way. For the meaning of the generator symbols, see Theorem~\ref{tlq} and its proof.
\begin{thm0}\label{gp18} (\textbf{Application}, see Theorem~\ref{tlq}). The 2-primary component of $\pi_{18} (\s^{3}\cpt)$ is given as follows,
\begin{align}
\pi_{18} (\s^{3}\cpt\colon2) &= \x j_{0}j_{1}j_{2}j_{2.5}\nu_{5}\cg_{8}\nu_{15}  ,\;j_{0}j_{1}j_{2}j_{3}j_{4}\nu_{15},\; j_{0}j_{1}j_{2}\hc \mathrm{coext}_{\nu_{5}\ca_{8}^{2}} (2\cg_{10}),\;\mathrm{coext}_{\ca_{5}} (\ck_{6})\xx \notag  \\
 &\tg \z/2\jia\z/4\jia\z/16\jia\z/8. \notag 
\end{align}\end{thm0}

We next add several comments to the  theorems above.

\begin{rem0}\upshape (\textbf{On the trivial algebra hypothesis}). For the space $C_{f}=S^{n}\cup_{f} e^{n+k+1}$, the following conditions (see Lemma \ref{hjg} (2)) are useful for verifying that 
$H^{*} (C_{f};\z/p)$  is a trivial algebra:\\
    \indent\quad $\bullet$ $C_{f}$ is a suspension, \\\indent\quad  $\bullet$   or $k+1\neq n$,\\ \indent\quad $\bullet$ or $k+1=n$ but the Hopf invariant of $f$ is divisible  by $p$. 
\end{rem0}

\begin{rem0}\upshape(\textbf{Roles of the hypotheses}).\begin{itemize}
        \item[\rm (1)]
To prove the main theorem, we use Selick-Wu's $\mathrm{A}^{\min}$-theory, which analyzes the loop suspension functor $\lo\s$ and  connects  homotopy theory and the representation theory of symmetric groups. Selick-Wu's $\mathrm{A}^{\min}$-theory is used  in this paper to isolate a functorial factor of $\Omega\Sigma C_f$ whose homological information detects  the  attaching map $\beta$. See Proposition \ref{aminq} and Proposition \ref{amindl} for the $\mathrm{A}^{\min}$-theory.  
The proof  relies on the assumption  $p\neq3$, which ensures that   $\mathrm{SQ}_{3}^{\mathrm{max}} (X)=\mathop{\mathrm{hocolim}}\limits_{\beta_{3}/3}\,\s X^{ \wedge 3}$ is well-defined; see~Lemma~\ref{q3max}.  It also relies on the assumption  that $n+k$ is odd if $p\geq5$; see~Lemma~\ref{fjdlyw}.  Additionally, the proof   requires the assumption that  $H^{*} (C_{f};\z/p)$ is a trivial algebra, ensuring that 
$H_{*} (\lo\s C_{f};\z/p)\tg T(\widetilde{H}_{*} (C_{f};\z/p))$ as primitively generated tensor Hopf algebras; see Proposition \ref{neihopf} and Lemma \ref{hjg}.

         \item[\rm (2)] In the case that $p=3$ or that $H^{*} (C_{f};\z/p)$ is a nontrivial algebra, we are unable to determine whether  a similar result holds  with current techniques.  \end{itemize}
\end{rem0}

\begin{rem0}\upshape(\textbf{The reason for computing   $\pi_{18} (\Sigma^{3}\mathbb{C}P^{2}:2)$}). To compute $\pi_{18} (\Sigma^{3}\mathbb{C}P^{2}:2)$, the essential point is that this group lies in the range where the third cell of the relevant homotopy fiber $F$ matters. Noting that $\s^{3}\cpt\simeq_{p} S^{5}\vee S^{7}$ for each odd prime $p$, the remaining torsions of $\pi_{18} (\s^{3}\cpt)$  are straightforwardly determined. As a side note,   we point out that for the Stiefel manifold
$ V_{2} (\mathbb{C}^{4})=\textit{SU} (4)/\textit{SU} (2)$ and any point $x_{0}\in V_{2} (\mathbb{C}^{4})$, there is a homotopy equivalence  $$V_{2} (\mathbb{C}^{4})\backslash\n x_{0}\nn\simeq \s^{3}\cpt$$ \noindent without taking any localization; see Remark~\ref{v42} for more details.\end{rem0}

\begin{rem0}\upshape(\textbf{The landscape around $\pi_{*} (\s^{3}\cpt)$}). The methods in \cite{hp2}  work for
the determination of  
$\pi_{i} (\s^{3}\cpt)$ for $i\leq 15$; for $16\leq i\leq 21$, to determine $\pi_{i} (\s^{3}\cpt)$, our main  theorem plays a central role. Additionally, our determination of $\pi_{18} (\s^{3}\cpt)$ is independent of any information on the groups $\pi_i(\s^{3}\cpt)$ with $i\neq18$,  and instead depends only on results for the corresponding homotopy groups of spheres. In fact, to the best of our knowledge, prior to this work there had been no nontrivial determinations of $\pi_{*} (S^{m}\cup e^{r})$ with $r-m\geq 2$ in the $J_{3}$ range. (Here, by a  trivial determination we mean the determination of  a 2-cell complex which is the projective plane $\mathbb{R}P^{2},\;\mathbb{C}P^{2} $ or $\mathbb{H}P^{2}$ or is a wedge sum of spheres. Recall that each homotopy group of $\mathbb{R}P^{2},\;\mathbb{C}P^{2} $ or $\mathbb{H}P^{2}$ is isomorphic to a direct sum of  homotopy groups of spheres.) See Remark \ref{gymt} for the terminology ``in the $J_{3}$ range''. Roughly speaking, this is the range in which the third cell of $F$ affects the computation of $\pi_{*} (\s C_{f})$. All $\pi_{*} (\s C_{f})$ in the $J_{3}$ range are outside the metastable range.\end{rem0}

\begin{rem0}\upshape (\textbf{The landscape around $\pi_{*} (S^{n+1}\cup e^{n+k+2})$}). Before the present work, the only known nontrivial determinations of homotopy groups of $2$-cell complexes in the $J_{3}$ range were due to Mukai \cite{xdwp}, Wu \cite{wu2003}, and Zhu-Jin \cite{zzj}. These works all focus on Moore spaces of the form $S^{n}\cup_{2^{i}}e^{n+1}$.

 Mukai's approach first determines the attaching map $\beta$ by using James' theorem on relative Whitehead products, and hence does not readily extend to general suspended $2$-cell complexes. Wu's computations avoid an explicit prior determination of $\beta$, but depend  on special decompositions of the loop spaces of mod $2$ Moore spaces, such as Equation~(\ref{pm2fj}). Zhu-Jin again determine $\beta$ by combining Wu's computations, Wu's special splitting of looped$\m2$  Moore spaces, and properties of higher Whitehead products. Their method appears to rely substantially on Wu's special ingredients. 
\end{rem0}

Our method used to compute $\pi_{18} (\s^{3}\cpt:2)$ also applies to general suspended $2$-cell complexes under the hypotheses of the main theorem. Using Gray's relative James construction, together with the main theorem and Corollary~\ref{bcd}, one can determine the $p$-primary components of the homotopy groups $\pi_{*} (S^{n+1}\cup_{\Sigma f}e^{n+k+2})$. This applies whenever the relevant groups $\pi_{*} (S^{m})$ are known, the third cell of $F$ affects the computation, and the fourth cell does not.

The next two theorems provide the homological input for the proof of the main theorem, arise as byproducts of that proof, and may also be of independent interest.

 The proof of Theorem~\ref{hao1} proceeds by determining the$\m p$-homology of the looped homotopy fiber, using the following theorem, which is a generalization of a result of Cohen-Moore-Neisendorfer  \cite[Corollary~9.4, p.~150]{CMN1979}. Their  result is recovered as a special case of ours by setting $r-m=1,\;\mathrm{deg} (f)=p^{a}$ for some $a\in\z_{+}$; see Remark \ref{jsxs}. 
 
 Note that 
Theorem \ref{dlyde}  and Theorem \ref{dad} are stated integrally; we do not take any localization here. Moreover, the hypotheses for the following two theorems are independent of those of the main theorem.  For  a tensor algebra $T(V)$, we say that $T(V)$ is a Hopf algebra by declaring all elements in $V$ to be primitive.   
 
\begin{thm0}\label{dlyde} (see Theorem \ref{dlydexj}). Let $p$ be an arbitrary prime, and take homology with coefficients in $\mathbb{Z}/p$. Let  $X=S^{m}\cup e^{r}$, where $2\leq m< r.$ Let $F_{q}$ be the homotopy fiber of the pinch map $q\colon X\rightarrow S^{r}.$ If   $H_{*} (\lo  X)\tg T(u,v)$ 
as Hopf algebras in which $|u|=m-1,|v|=r-1$,  then  the fiber sequence $\lo F_{q}\xrightarrow[]{\lo j} \lo X \xrightarrow[]{\lo q}\lo S^{r}$ induces  a short exact sequence of Hopf algebras
\[H_{*} (\lo F_{q})\xrightarrow[]{(\lo j)_{*}} H_{*} (\lo X) \xrightarrow[]{(\lo q)_{*}}H_{*} (\lo S^{r}).\] 
\noindent
Moreover, there exists an isomorphism of Hopf algebras
$$H_{*} (\lo F_{q})\tg T(\n\mathrm{ad}^{i} (v)(u)\;|\; i\geq0\nn)= T(u,\,[u,v],\,[[u,v],v],\,[[[u,v],v],v],\;\cdots).$$    
\end{thm0}

Denote by $\cg^{-1}$  the  desuspension isomorphism on homology groups shifting degrees down by 1.
A more general theorem is given as follows.
\begin{thm0}\label{dad} (see Theorem \ref{dadxj}).  Let $p$ be  an arbitrary prime, and take homology with coefficients in $\mathbb{Z}/p$. Let $X$ be a finite CW complex with its CW decomposition  $X = X' \cup e^{r}$. Here, $X'\not\simeq *$ is a simply-connected  subcomplex  with $\dim(X')\le r$ and  $\mathrm{rank} (H_{r} (X))=\mathrm{rank} (H_{r} (X'))+1$.\footnote{This equation ensures  that after localization at $p$, there is a classical bijection between  the set of cells of  $X$ and a minimal generating set of $\widetilde{H}_{*} (X)$.  Roughly speaking, it is imposed to avoid cases like  $S^{r-1}\cup_{\mathrm{id}} e^{r}$.}  Let $F_{q}$, $j$ and $q$ be given in the following homotopy fibration, $$F_{q}\stackrel{j}\rightarrow X \xrightarrow[pinch]{q}X/X'=S^{r}.$$  \;\\ \vspace{-2.0\baselineskip}\\
\noindent If $H_{*} (\lo X)\cong T(\cg^{-1} (      \widetilde{H}_{*} (X)  ))$ as Hopf algebras,
    then the fiber sequence $\lo F_{q}\xrightarrow[]{\lo j} \lo X \xrightarrow[]{\lo q}\lo S^{r}$
 induces  a short exact sequence of Hopf algebras
\[H_{*} (\lo F_{q})\xrightarrow[]{(\lo j)_{*}} H_{*} (\lo X) \xrightarrow[]{(\lo q)_{*}}H_{*} (\lo S^{r}).\] 
\noindent In addition, setting \[\cg^{-1} (      \widetilde{H}_{*} (X))=\mathrm{Span}_{\z/p}\n u_{1},u_{2},\cdots,u_{s},v\nn\;\text{with}\; |u_{i}|\leq |v| =r-1 \] and $0\neq(\lo q)_{*} (v)\in H_{r-1} (\lo S^{r})$, then there exists an isomorphism of Hopf algebras \[H_{*} (\lo F_{q})\tg T(\n\mathrm{ad}^{i} (v)(u_{k})\;|\; i\geq0,\; 1\leq k\leq s\nn).\]
\end{thm0}

Theorem \ref{dlydexj} (corresponding to Theorem \ref{dlyde}) and Theorem \ref{dadxj} (corresponding to Theorem \ref{dad}) remain true without the assumption of localization at a prime $p$. Namely, their analogues for integral spaces also hold, while homology is still taken with coefficients in $\z/p$. This is because removing the assumption of localization at  $p$ does not affect  the proof. We state them integrally here.

\begin{rem0}\upshape(\textbf{Roles of ``Lie brackets'' in homotopy and homology}). In Theorem \ref{dlyde}  and Theorem \ref{dad}, we use  bases of free Lie algebras. It is worth noting that bases of free Lie algebras arise naturally in the Hilton--Milnor splitting. That is, (without taking any localization), for path-connected CW complexes  $X$  and $Y$,
\[
\Omega(\s X \vee \s Y)\;\simeq\; \prod_{w \in B} \, \Omega \s \bigl(X^{\wedge w_x} \wedge Y^{\wedge w_y}\bigr), 
\]
where $B$ is the $\mathbb{Q}$-basis (called the Hall--Lyndon basis) for the free ungraded Lie algebra $L(x,y)$ over $\mathbb{Q}$ such that each element of $B$ is  an iterated Lie bracket  $w$. Each iterated Lie bracket  $w$ is of the form $$[    \cdots   [    [    -,-  ],-   ]  \cdots    ,-   ]$$ with entries $x$ and $y$; 
 (regard $x$ and $y$ as  the  iterated Lie brackets of bracket-length 1).  In the above splitting, $w_x$ and $w_y$ denote the number of $x$'s and $y$'s respectively in the iterated bracket $w$.  The inclusion map from each
factor to the whole looped wedge is given by the loop map of the iterated Whitehead products of the form $$[    \cdots   [    [    -,-  ],-   ]  \cdots    ,-   ]$$ with entries $i\colon\s X \hookrightarrow \s X\vee \s Y$ and $j\colon\s Y \hookrightarrow \s X\vee \s Y$.  See Selick's textbook \cite[Theorem 7.9.4, p.~86]{selickjc}  and Spencer \cite{jj}. 
\end{rem0}

We make some further notes on $\pi_{18} (\s^{3}\cpt:2)$ here.
To calculate $\pi_{18} (\s^{3}\cpt:2)$, we need to consider  the following homotopy fibration  and first determine $\pi_{18} (Z)$, $$S^{5}\cup e^{11}\cup e^{21}\cup e^{27}\cup \cdots=Z\longrightarrow\fs\xrightarrow[]{pinch} S^{17}.$$ In the computation of $\pi_{18} (Z)$, it happens that the cell 
$e^{21}$ of $Z$ does not affect the computation,
for otherwise Corollary~\ref{bcd} would come into play. For example, when calculating $\pi_{21} (\s^{3}\cpt)$, the  group  $\pi_{21} (Z)\tg\pi_{21} (S^{5}\cup e^{11}\cup e^{21})$. Employing  Corollary~\ref{bcd} and Gray's relative James-Hopf invariant $H_{2}$, we are able to show that the 2-primary component of $\pi_{21} (\s^{3}\cpt)$ is given by $$\pi_{21} (\s^{3}\cpt\colon  2)\tg(\z/2)^{\jia4}\jia\z/4\jia\z.$$
\noindent 
The purpose of the present paper is not to engage in the lengthy and intricate computations of Toda brackets (including, in particular, the unstable  4-fold Toda brackets) that would be required to determine $\pi_{21} (\Sigma^{3}\mathbb{C}P^{2}:2)$, and we therefore do not carry out this computation here.
\medskip

\noindent\textbf{Roadmap of the proof and the organization of the paper.} \medskip

The proof of the main theorem proceeds through the following steps. First, Gray's relative James construction provides a concrete model for the homotopy fiber $F$. Next, we analyze the homology of the looped homotopy fiber and isolate the algebraic pattern governing the relevant attaching maps. We then use Selick-Wu's $\mathrm{A}^{\min}$-theory to detect the specific functorial summand responsible for the third cell of $F$. Combining these ingredients, we identify the  attaching map $\beta$ of $F$ in terms of an iterated suspension of the original attaching map $f$, up to explicit indeterminacy. Finally, we apply this result to compute the $2$-primary component of $\pi_{18} (\Sigma^{3}\mathbb{C}P^{2})$.

The paper is organized as follows. Section~\ref{secconv} introduces the conventions used throughout the paper. Section~\ref{secptd} develops the homological preliminaries needed later, while Section~\ref{sectl} recalls the relevant homotopy-theoretic input, including Gray's relative James construction. Section~\ref{seccmn} establishes the homological results for looped homotopy fibers. Section~\ref{secamin1} reviews the part of Selick-Wu's $\mathrm{A}^{\min}$-theory needed in the proof.  Section~\ref{secamin2} applies the $\mathrm{A}^{\min}$-theory to the 2-cell complex $C_{f}$.  Section~\ref{secproofbeta} contains the proof of the main theorem. Section~\ref{seccp2} is devoted to the computation of the $2$-primary component of $\pi_{18} (\Sigma^{3}\mathbb{C}P^{2})$.
\medskip
\;\\\noindent \\\large\textbf{Acknowledgments}\normalsize

\;\\ \indent The first and second authors are supported in part by the Natural Science Foundation of China (NSFC grant no.~12331003). The
third author is  supported in part by the Natural Science Foundation of China (NSFC grant no.~82573048). The first and the fourth authors are supported in part by the High-level Scientific Research Foundation of Hebei Province.  The second, third and fourth authors are also supported in part by the start-up research fund from Beijing Institute of Mathematical Sciences and Applications.

\section{Conventions}\label{secconv}

\subsection{Spaces, maps, and homotopy conventions}

Throughout this paper, all spaces are assumed to be CW complexes. Spaces and maps are pointed;  basepoints and constant maps are denoted by $*$. 
Unless otherwise stated, no distinction will be made between “maps” and “homotopy classes of maps,” nor between “homotopy-commutative” and “commutative” when referring to diagrams of spaces. Suppose that $X$ is a space homotopy equivalent to $Y$; then we employ the identification   $X=Y$     if      no confusion arises.

\subsection{Localization and homology conventions}

Let $p$ be  a prime.
 The  ring of
$p$-local integers is denoted by  $\z_{(p)}$. The functor ${H}_{\ast} (-)$  denotes the mod $p$ unreduced homology and $\widetilde{H}_{\ast} (-)$  denotes the mod $p$ reduced homology. Similar conventions work for the mod $p$ unreduced or reduced 
 cohomology. As is customary in most of the literature, the pointed structure of a space is forgotten  when using homology or cohomology.  For convenience, in a   $\z_{(p)}$-module, we say that  $x= y$ up to a unit in $\z_{(p)}$ if  $x=\ell y$ where $\ell$ is some unit in the ring $\z_{(p)}$.

\subsection{Algebraic conventions}

We use $\mathrm{span}_{\z/p}\n a_{1},a_{2},\cdots,a_{n}\nn$ to denote the graded $\z/p$-vector space with a basis $a_{1},a_{2},\cdots,a_{n}$. By  algebras, coalgebras, Hopf algebras and modules we mean the graded ones. The notation $T(V)$ stands for the tensor algebra over $\z/p$ generated by  $V$.
Unless otherwise specified, by a Hopf algebra
$T(V)$ we mean  the Hopf algebra obtained by declaring elements of 
$V$  to be primitive. For $x,y\in T(V)$, define $[x,y]=x\otimes y-(-1)^{|x||y|}y\otimes x;$ set $$\mathrm{ad}^{0} (y)(x)=x,\;\mathrm{ad}^{1} (y)(x)=[x,y] \;\;\text{and}\;\; \mathrm{ad}^{i} (y)(x)=[\mathrm{ad}^{i-1} (y)(x),y].$$ \noindent If no confusion arises, we denote an element $a\otimes b$ in a tensor product or a tensor algebra by $ab$ for short.

\subsection{$p$-local CW complexes}

Suppose that $X$ is a $p$-local simply-connected CW complex. Then $X$ is said to be of finite type if $H_{i} (X;\z_{(p)})$ is a finitely generated $\z_{(p)}$-module for each $i.$ When working $p$-locally, by a cell  of $p$-local dimension $n$, namely, $e^{n}$,  we mean the corresponding $p$-local cell $C(S^{n}_{(p)})\backslash S^{n}_{(p)}$, and the dimension of a $p$-local CW complex $X$ means the highest $p$-local dimension of its  $p$-local cells, denoted by $\dim (X)$.

\subsection{Products and diagonals}

For a space $X$, $X^{\wedge n}$ denotes the $ n$-fold self smash product of $X$ and  $X^{\wedge 0}=S^{0}$; and $X^{\times n}$ denotes the $ n$-fold self Cartesian  product of $X$. Denote by $\Delta_{X}\colon X\to X\times X$ the diagonal map; if the space is clear from the context, we write $\Delta$ instead.

\subsection{Suspension notation in homology}

We denote the homology suspension isomorphism
$
\widetilde{H}_{i} (-)\rightarrow \widetilde{H}_{i+1} (\Sigma -)
$
by $\cg(-)$. Accordingly, for a graded module $V$, we write $\cg\colon V\rightarrow\cg(V)$ for the isomorphism shifting degrees up by $1$. That is,  take a graded module $W$ such that for each $i$ there is an isomorphism of ungraded modules
$\cg_{i}\colon V_{i}\rightarrow W_{i+1}$ and hence  $\cg=\oplus_{i}\cg_{i}\colon V\rightarrow W=\cg(V)$ is a degree-shifting isomorphism of degree +1.  Similarly, we write
$\cg^{-1}\colon V\rightarrow\cg^{-1} (V)$ for the isomorphism shifting degrees down by $1$.
The homology suspension  $\widetilde{H}_{i} (\lo- )\rightarrow\widetilde{H}_{i+1} (- )$ is denoted by $\cg'(-)$.

\subsection{Default coefficient
field and  ground field}

Unless otherwise stated, in our setting of localization at $p$, the coefficient
field and the ground field shall be understood to be $\z/p$ without further explanation. 

\section{Preliminary  homology theory}\label{secptd}
In this section, spaces are understood in the ordinary sense and are not assumed to be localized at any prime $p$.

\subsection{Some results for the Hopf algebra structure}

When applying the Bott-Samelson theorem \cite[Corollary 4.1.5, p.~111]{nei}, one should keep track of the coproduct structure carefully.   We spell out this theorem  explicitly here, which will play a role later in the analysis of the homology of the looped homotopy fiber and in the application of Selick-Wu's $\mathrm{A}^{\min}$-theory.

\begin{pro}\label{neihopf}  (\textbf{the Bott-Samelson theorem}) Let $X$ be a path-connected space and take homology with
coefficients in a field $\mathbbm{k}$.  Then the following
properties hold. (Recall that by a Hopf algebra $T(V)$ we mean the tensor Hopf algebra  primitively generated by $V$.) 
\begin{itemize}
    \item [\rm (1)]  $H_{*} (\lo\s X)\tg T(\widetilde{H}_{*} (X))$
    as algebras. (We shall refer to this canonical homomorphism  as the Bott-Samelson homomorphism.)
    \item [\rm (2)]  Let $(T(\widetilde{H}_{*} (X)))'$ be the tensor algebra   $T(\widetilde{H}_{*} (X)).$
    Equip $(T(\widetilde{H}_{*} (X)))'$  with
    a  Hopf algebra structure. Its coproduct  is obtained by extending the homomorphism
     $(\Delta_{X}) _{*}\colon H_{*} (X)\rightarrow H_{*} (X)\otimes H_{*} (X)$, the  coalgebra structure map of $H_{*} (X)$; specifically, the coproduct of $(T(\widetilde{H}_{*} (X)))'$ is determined by the following commutative diagram,
      \[
\begin{tikzcd}[column sep=large]
 T(\widetilde{H}_{*} (X))\arrow[r] & T(\widetilde{H}_{*} (X))\otimes T(\widetilde{H}_{*} (X)) \\
H_{*} (X) \arrow[r,"(\Delta_{X})_{*}"] \arrow[u,hook] & H_{*} (X)\otimes H_{*} (X). \arrow[u,hook]
\end{tikzcd}
\]   \noindent (This is simply an equivalent description of $(\Delta_{\lo\s X})_{*}$.)  Then,  $H_{*} (\lo\s X)\tg (T(\widetilde{H}_{*} (X)))'\;\text{as Hopf algebras}.$ In particular,  $H_{*} (\lo\s X)\tg T(\widetilde{H}_{*} (X))$ as Hopf algebras if all elements of $\widetilde{H}_{*} (X)$ are primitive.\hfill\boxed{}
\end{itemize}  
\end{pro}

The following example shows that simply-connectedness of $X$ does not suffice to ensure that the Bott-Samelson homomorphism in Proposition~\ref{neihopf} (1) is an isomorphism of Hopf algebras.

   \begin{exa}\label{gyprmtv} Let the coefficient field be $\mathbbm{k}$.
\begin{itemize}
    \item [\rm (1)] The  Bott-Samelson homomorphism  $H_{*} (\lo\s \cpt)\tg T(\widetilde{H}_{*} (\cpt))$ is an isomorphism of algebras, but not of Hopf algebras even though $\cpt=S^{2}\cup e^{4}$ is simply-connected. The generator of $H_{4} (\cpt)$ is not primitive. Generally, let $\mathbb{F}=\mathbb{R},\mathbb{C},\mathbb{H}\;\text{or}\;\mathbb{O}$ and $d=\mathrm{dim}_{\mathbb{R}} (\mathbb{F})$; when $\mathbb{F}=\mathbb{R}$, additionally assume that the coefficient field $\mathbbm{k}=\z/2$. Then, the Bott-Samelson homomorphism  $H_{*} (\lo\s \mathbb{F}P^{2})\tg T(\widetilde{H}_{*} (\mathbb{F}P^{2}))$ is an isomorphism of algebras, but not of Hopf algebras. The generator of $H_{2d} (\mathbb{F}P^{2})$ is not primitive.
    \item [\rm (2)] Take $\mathbbm{k}=\z/3$ and localize spaces at 3. Then there is a non-canonical Hopf algebra isomorphism  $$H_{*} (\lo\s \cpt)\tg T(\widetilde{H}_{*} (\cpt)),$$ \noindent even though the generator of $H_{4} (\cpt)$ is not primitive.

     \item [\rm (3)]  Take $\mathbbm{k}$ to   be an arbitrary field   with $\mathrm{char} (\mathbbm{k})\neq2$  and work with integral spaces.  Let $\mathbb{F}=\mathbb{C},\mathbb{H}\;\text{or}\;\mathbb{O}$ and $d=\mathrm{dim}_{\mathbb{R}} (\mathbb{F})$. Identify  $H_{*} (\lo\s \mathbb{F}P^{2})$ with $ T(\widetilde{H}_{*} (\mathbb{F}P^{2}))$ as algebras via the Bott-Samelson isomorphism. Then, for some generators $u\in H_{d} (\mathbb{F}P^{2}) $ and $v\in H_{2d} (\mathbb{F}P^{2})$, there exists an isomorphism of Hopf algebras $$H_{*} (\lo\s \mathbb{F}P^{2})\tg T(u,u\otimes u-2v),\;\;u\mapsto u,\;\, u\otimes u-2v\mapsto u\otimes u-2v.$$

\end{itemize}

   \end{exa} \begin{proof}\begin{itemize}
       \item [\rm (1)]We know the  algebra $H^{*} (\mathbb{C}P^{2})=\frac{\mathbbm{k}[x]}{x^{3
       }\sim 0}$ with $|x|=2$. Denote by $\Delta\colon X\rightarrow X\times X$ the diagonal map for each space $X$. Hence, the  coalgebra
          $$H_{*} (\mathbb{C}P^{2})=(\frac{\mathbbm{k}[x]}{x^{3}\sim 0})^{*}=\Gamma_{2}[y]\xyd\Gamma[y],\;\;(|y|=2),$$\noindent the second stage of the  divided power coalgebra. (Recall the well-known fact that  $(\mathbbm{k}[x])^{*}\tg \Gamma[y]$ as Hopf algebras; disregarding the topological background,  take the  dual of $\frac{\mathbbm{k}[x]}{x^{3}\sim 0}$ as a Hopf algebra and then view the resulting object only as a coalgebra.) Here, $\Gamma[y]$ has its standard $\mathbbm{k}$-basis  $\n y_{j}\nn_{j=0}^{\wq}$ with $|y_{j}|=2j$; by the $n$-stage $\Gamma_{n}[y]$ of $\Gamma[y]$, we mean the $\mathbbm{k}$-vector space spanned by $\n y_{j}\nn_{j=0}^{n}$, equipped with the coproduct obtained by restricting the coproduct of $\Gamma[y]$. Set $u=y_{1},\;v=y_{2}$.  Note that $|u|=2,\;|v|=4$. Then we deduce that $$\Delta_{*} (v)=v\otimes1+1\otimes v+u\otimes u\neq v\otimes1+1\otimes v.$$ 

Since $E\colon \cpt\rightarrow\lo\s\cpt$ induces a coalgebra monomorphism, we see that $v\in H_{4} (\cpt)\xyd H_{4} (\lo\s\cpt)$ is not primitive in $H_{*} (\lo\s\cpt)$.
        The cases $\mathbb{F}=\mathbb{R}, \mathbb{H}$, and $\mathbb{O}$ can be proved in a similar way.
       \item [\rm (2)] Take $\mathbbm{k}=\z/3$ and localize spaces at 3. It suffices to note that the homotopy group $\pi_{4} (\emph{S}^{3})=\z/2\otimes\z_{(3)}=0$, resulting in $\s \cpt=\emph{S}^{3}\cup e^{5}\simeq\emph{S}^{3}\vee \emph{S}^{5}.$ 

 \item [\rm (3)] We show the case $\mathbb{F}=\mathbb{C}$. Similar proofs
 apply for
$\mathbb{F}=\mathbb{H}\;\text{or}\;\mathbb{O}$.\\
 \indent \quad Now, take $\mathbb{F}=\mathbb{C}.$  Use the notation in the proof of assertion (1).  The homomorphism in assertion (3) of this example is obviously an algebra isomorphism because $T(u,u\otimes u-2v)=T(u,v)$. The element $u\in H_{*} (\lo\s \cpt)$ is clearly primitive. Next we show that $u\otimes u-2v\in H_{*} (\lo\s\cpt)$ is  primitive. The element $\Delta_{*} (u\otimes u-2v)$
 is equal to 
 $$\big(u^{2}\otimes1+1\otimes u^{2}+2u\otimes u\big)-\big(2v\otimes1+1\otimes 2v+2u\otimes u\big)=u^{2}\otimes1+1\otimes u^{2}-2v\otimes1-1\otimes 2v.$$ \noindent  (Here, we denote $u\otimes u\in H_{*} (\lo\s\cpt)$ by $u^{2}$ in order to distinguish internal products from  external products.) It follows that
      $$\Delta_{*} (u^{2}-2v)-(u^{2}-2v)\otimes1-1\otimes(u^{2}-2v)=0.$$ 
      \noindent Hence the result holds.
   \end{itemize}
 \end{proof}

Let $f\in\pi_{n+k} (S^{n})$ with $n\ge 2$ and $k\ge 0$. Let $F$ be the homotopy fiber of the pinch map
\[
\Sigma C_f=S^{n+1}\cup_{\Sigma f} e^{n+k+2} \longrightarrow S^{n+k+2},
\]
which collapses $S^{n+1}$ to the basepoint.
When $k=0$ and working $p$-locally, we additionally assume that
the degree of the  map $S^{n}\stackrel{f}\longrightarrow S^{n}$ is $p^{a}$  (for some $a\in\z_{+}$) or 0. \textbf{Hereafter,  the meanings of the symbols $f,n,k$ and $F$ are fixed.}  Working $p$-locally,
the nontrivial CW decomposition
 $C_{f}=S^{n}\cup_{f} e^{n+k+1}$ ($n\geq2,\;k\geq0$) implies the  algebra $$H_{*} (\lo\s C_{f})\tg T(x,y),\;(|x|=n,\;   |y|=n+k+1).$$
\noindent We shall always use $x$ and $y$ to denote the generators of the$\m p$ homology $\widetilde{H}_{*} ( C_{f})$. If a property of 2-cell complexes holds under a weaker assumption (such as $``n\geq2"$  or ``$C_{f}\not\simeq*$'' is not required), we will use  the notation $S^{m}\cup e^{r}$   rather than $C_{f}=S^{n}\cup_{f} e^{n+k+1}$ ($n\geq2,\;k\geq0$).

The following lemma is well-known.
\begin{lem}\label{hjg} Localize spaces at $p$ and
take homology and cohomology with coefficients in $\z/p$. Recall that the CW complex $C_{f}=S^{n}\cup e^{n+k+1}$  is  finite. The following assertions hold.

\begin{itemize}
    \item [\rm (1)] All elements in the coalgebra $\widetilde{H}_{*} (C_{f})$ are primitive if and only if all cup products  in the algebra $\widetilde{H}^{*} (C_{f})$  are trivial.
    \item [\rm (2)] For the space $C_{f}=S^{n}\cup e^{n+k+1}$,  if  \vspace{-0.8\baselineskip}\\\\\indent\quad $\bullet$ $C_{f}$ is a suspension, \\ \indent\quad $\bullet$   or $k+1\neq n$,\\ \indent\quad $\bullet$ or $k+1=n$ but the Hopf invariant of $f$ is divisible  by $p$, \vspace{-0.8\baselineskip}\\\\then $H^{*} (C_{f})$ is a trivial algebra and hence   $H_{*} (\lo \s C_{f})\tg T(\widetilde{H}_{*} (C_{f}))$ as Hopf algebras. \hfill\boxed{}
\end{itemize}\end{lem}

\subsection{The action of the Samelson product at the homology level}

The following lemma serves to examine the homology behavior  of maps under Selick-Wu's $\mathrm{A}^{\min}$-theory.

In the case that  $A_{1}, A_{2}$ are spheres,  assertion (3) of
the following lemma is an exercise in Neisendorfer's book \cite[p.~114]{nei}.
\begin{lem}\label{samelson}
  Let $A_{1},A_{2}$ and $X$ be path-connected spaces. Let $f_{i}\colon A_{i}\rightarrow\lo\s X$ be maps and $E\colon X\rightarrow\lo\s X$ be the canonical inclusion. Write $I=\mathrm{id}_{\lo\s X}$. Take homology  with coefficients in a field $\mathbbm{k}.$
    For $$\x I,I\xx\colon \lo\s X\wedge \lo\s X\rightarrow \lo\s X,\;\;\x E,E\xx  \colon  X\wedge  X\rightarrow \lo\s X \;\;\text{and}\;\; \x f_{1},f_{2} \xx\colon A_{1}\wedge A_{2}\rightarrow \lo\s X, $$ the Samelson products, the following statements hold.
    \begin{itemize}
        \item[\rm (1)] If $a,b\in\widetilde{H}_{*} (\lo\s X)$ are primitive, then,  the homomorphism $\x I,I\xx_{*}\colon \widetilde{H}_{*} (\lo\s X)\otimes\widetilde{H}_{*} (\lo\s X) \rightarrow \widetilde{H}_{*} (\lo\s X) $  sends $a\otimes b$ to $a b-(-1)^{|a||b|}b a=[a,b].$
        \item[\rm (2)] If $a,b\in\widetilde{H}_{*} ( X)$ are primitive, then,  the homomorphism $\x E,E\xx_{*}\colon \widetilde{H}_{*} ( X)\otimes\widetilde{H}_{*} ( X) \rightarrow \widetilde{H}_{*} (\lo\s X) $  sends $a\otimes b$ to $[a,b].$
 
        \item[\rm (3)]   If $x_{1}\in\widetilde{H}_{*} (A_{1})$ and $x_{2}\in\widetilde{H}_{*} (A_{2})$ are primitive, then, 
        the homomorphism $\x f_{1},f_{2}\xx_{*}\colon \widetilde{H}_{*} ( A_{1})\otimes\widetilde{H}_{*} ( A_{2}) \rightarrow \widetilde{H}_{*} (\lo\s X) $  sends $x_{1}\otimes x_{2}$ to $[f_{1*} (x_{1}),f_{2*} (x_{2})].$
    \end{itemize}
   
\end{lem}

\begin{proof} We need only to prove assertion (1). Assertion (2) and (3) follow from assertion (1) and the property   $$\x f_{1},f_{2}\xx=\x I,I\xx\hc (f_{1}\wedge f_{2}).$$
\noindent For a topological group $G$, denote by $(-1)$ the $(-1)$-st  power map $G\rightarrow G$, $x\mapsto x^{-1}.$ And denote by $\Delta$ the diagonal map of a space. Now, identify $\lo\s X$ with a topological group and write $G=\lo\s X$. Notice that the following composition $\w$ lifts to $\x I, I\xx\colon G\wedge G\rightarrow G$ over the canonical projection, 
    \[
\begin{tikzcd}[column sep=1.7cm]
\w\colon G \times G 
\arrow[r, "\Delta \times \Delta"] 
& G^{\times 2} \times G^{\times 2} 
\arrow[r, "(23)"] 
& G^{\times 2} \times G^{\times 2} 
\arrow[r, "{(-1,\,-1) \times(I,\,I)}\;"] 
& G^{\times 2} \times G^{\times 2} 
\arrow[r, "\bar{\mu}"] 
& G.
\end{tikzcd}
\]

\noindent Here, the map $(23)$ is given by $(x_{1},x_{2},x_{3},x_{4})\mapsto (x_{1},x_{3},x_{2},x_{4})$ and $\cq$ is the 4-fold product on $G.$ Suppose that the elements $a,b\in\widetilde{H}_{*} (G)$ are primitive. We examine $\w_{*} (a\otimes b)$. Clearly, $\Delta_{*} (a)=a\otimes1+1\otimes a$ and $\Delta_{*} (b)=b\otimes1+1\otimes b$.

So, $$(\Delta\times\Delta)_{*} (a\otimes b)=(a\otimes1+1\otimes a)\otimes(b\otimes1+1\otimes b)=a\otimes 1\otimes b\otimes 1+a\otimes 1\otimes 1\otimes b+1\otimes a\otimes b\otimes 1+1\otimes a\otimes 1\otimes b.$$ \noindent
Then, $(23)_{*} (\Delta\times\Delta)_{*} (a\otimes b)$ is  equal to $$a\otimes b\otimes 1\otimes 1+a\otimes 1\otimes 1\otimes b+(-1)^{|a||b|}1\otimes b\otimes a\otimes 1+1\otimes 1\otimes a\otimes b. $$
\noindent Note that $(-1)_{*} (1)=1$,  $(-1)_{*} (a)=-a$ and $(-1)_{*} (b)=-b$.  It follows that $((-1,-1)\times (I,I))_{*} (\Delta\times\Delta)_{*} (a\otimes b)$
is $$a\otimes b\otimes 1\otimes 1-a\otimes 1\otimes 1\otimes b-(-1)^{|a||b|}1\otimes b\otimes a\otimes 1+1\otimes 1\otimes a\otimes b .$$ Hence, $\w_{*} (a\otimes b)=\cq_{*} ((-1,-1)\times (I,I))_{*} (\Delta\times\Delta)_{*} (a\otimes b)=[a,b].$ Therefore, $\x I, I\xx_{*} (a\otimes b)=[a,b].$
\end{proof}

\subsection{Moore's result for the  Leray-Serre spectral  sequence}

The following  proposition is due to Moore in his 
1956 paper \cite{Moore56}.  A  review of the proof of this result is given in \cite[Section 2]{pxlfx}.

Let $g\colon X\rightarrow Y$ be a map between simply-connected spaces and let $F_{g}$ be the homotopy fiber of $g$. Then we have a fiber sequence \[\lo Y\rightarrow F_{g}\rightarrow X\stackrel{g}\rightarrow Y.\] Suppose that $\mathbbm{k}$ is a field. Using these symbols, we give  
the following  proposition.

\begin{pro}\label{mdpxl}   In the  $\mathbbm{k}$-homology Leray-Serre spectral sequence $\n E^{\bullet}_{\bullet,\bullet},d_{\bullet}\nn$ associated with the homotopy fibration $\lo Y\rightarrow F_{g}\rightarrow X$, we have $d_{r} (u\cdot v)=(d_{r}u)\cdot v$ for all $u\in E^{r}_{i,0}$ and $v\in H_{j} (\lo Y)$ with all $r\geq2$ and all  $i,j\in\z_{+}$, and the product ``$\cdot$" is induced by the right action of $\lo Y$ on  $F_{g}$\;$\colon F_{g}\times\lo Y\rightarrow F_{g}$.   This  spectral sequence is a right $H_{*} (\lo Y)$-module spectral sequence.  If $r=2$, we have $u\cdot v=u\otimes v\in E^{2}_{i,j}$.\hfill $\boxed{}$\end{pro}

\section{Preliminary homotopy  theory}\label{sectl}

In this section, spaces are understood in the ordinary sense and are not assumed to be localized at any prime $p$.

\subsection{Gray's relative James construction}

Gray's relative James construction $J(X,A)$ (which is denoted by $(X,A)_{\wq}$ in Gray's notation) 
provides a beautiful model of the homotopy fiber of the pinch map $X\cup CA \rightarrow (X\cup CA)/X=\s A$. 

For convenience, when applying the relative James construction, we denote the mapping cylinder of a map $X \rightarrow Y$ by $M_Y$.

\begin{pro}\label{grdl} (\cite{Gray,hp2}) Let $X$ be a path-connected  \textit{CW}  complex and $A$ be its path-connected  subcomplex. Let $i\colon  A\hookrightarrow X$  be the inclusion. Then we have the inclusion  $i\colon  (A,A)\hookrightarrow (X,A)$.
The following statements hold.
\begin{itemize}

\item[\rm(1)]\label{grdl1a} $J(A,A)=J(A)$, where $J(A)$ is the ordinary James construction. There exists a fiber sequence\vspace{-0.6\baselineskip}
\[\begin{tikzcd}
   J(A) 
    \arrow[r,hook, "J(i)"] 
  & J(X,A) 
    \arrow[r] 
  & X \cup_{i} CA 
    \arrow[r, "p"] 
  & \Sigma A,
\end{tikzcd}\]  
\noindent where $p$ is  the pinch map, $ J(i)$ is the inclusion   extended over   $i\colon  (A,A)\hookrightarrow (X,A)$.  Moreover, if  $$K\stackrel{g}\longrightarrow L \stackrel{j_{_{L}}} \longrightarrow C_{g}\stackrel{q}\longrightarrow\s K$$ \noindent is a cofiber sequence where  $K$,\;$L$ are path-connected  \textit{CW}  complexes,\; then there exists a fiber sequence \;\vspace{-0.6\baselineskip} 
\[\begin{tikzcd}[column sep=0.8cm]
   J(K) \arrow[r,hook,"J(i_g)"] & J(M_L, K) \arrow[r] & C_g \arrow[r, "q"] & \Sigma K,
\end{tikzcd}\]
\noindent where \textbf{$M_{L}$ is the mapping cylinder of $g$}, and 
$ J(i_{g})$  is the inclusion   extended over  $i_{g} \colon  (K,K)\hookrightarrow (M_{L}, K)$.
\item[\rm(2)]\label{grdl2a}   If  $X/A$ is path-connected, and  $\lo\s A$,  $X/A$ are \textit{CW} complexes of finite type,  (for example, $X$ is a finite simply-connected CW complex and $A$ is its skeleton), then$$\widetilde{H}_{\ast} (J(X,A); \mathbbm{k})\tg \widetilde{H}_{\ast} (X;\mathbbm{k})\otimes H_{\ast} (\lo\s A;\mathbbm{k})\;\;\text{ where} \;\mathbbm{k}\;\text{is a field}.$$  
\item[\rm(3)]\label{grdl3a}There exists a filtration  where \,$J_{0} (X,A)=*,\;\, J_{1} (X,A)=X$, $$J_{0} (X,A)\subseteq J_{1} (X,A)\subseteq J_{2} (X,A)\subseteq\cdots,\quad J(X,A)=\bigcup_{m\geq 0}J_{m} (X,A).$$\vspace{-1.5\baselineskip}
\item[\rm(4)]If  $X=\s X' $,\; $A=\s A'$,\;then $J_{2} (X,A)=X \cup _{[\mathrm{id}_{X},i]} C(X\wedge A')$, where $[\mathrm{id}_{X},i]$  is the Whitehead product.\item[\rm(5)]\label{grdl5a}  $J_{m} (X,A)/J_{m-1} (X,A)=X\wedge A^{\wedge (m-1)},\;\; \s J(X,A)\simeq\s \bigvee_{m\geq 0}X\wedge A^{\wedge m}.$\item[\rm(6)] There exist relative James-Hopf invariants $H_{m}\colon  J(X,A)\rightarrow J(X\wedge A^{\wedge (m-1)})$, which are natural with respect to pairs.
\end{itemize}\hfill\boxed{}\;
\end{pro}
The following lemma is useful for studying the skeletons of the homotopy fibers. For details on minimal cell structures, see Hatcher's textbook \cite[Proposition 4C.1, p.~429]{xh}. 
\begin{lem}\label{j2} (\cite{hp2}) Let $m\geq2$ be an integer and $X= Y\cup_{g} e ^{m+1}$ for some map $g$, where $Y$ is a \textit{CW} complex satisfying the three conditions:  
\begin{itemize}
    \item [\rm(i)] $Y$ is simply-connected and not contractible;
    \item [\rm(ii)]  $Y$ is of finite type, and the cell structure of $Y$ is minimal, that is, the cell structure of $Y$ is  consistent with its homology;
    \item [\rm(iii)]$\mathrm{dim} (Y)\leq m.$
\end{itemize}Let $r\geq2$ be the  dimension of
the bottom cell(s) of  $Y$. Then,  for the homotopy fiber $J( M_{Y},S^{m})$ of $X\xrightarrow[]{pinch}S^{m+1},$ we have $\mathrm{sk}_{j} (J( M_{Y},S^{m}))=J_{i} ( M_{Y},S^{m})$ in which  $j=m(i-1)+r-1.$\hfill\boxed{}

\end{lem}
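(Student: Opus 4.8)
The plan is to show that, in the relevant range, the James filtration of $J(M_Y,S^m)$ furnished by Proposition~\ref{grdl} coincides with the skeletal filtration; the whole point is that the successive filtration quotients are suspensions of $Y$ whose cells fall into disjoint dimension bands separated by gaps. First I would record from Proposition~\ref{grdl}(3) the filtration $*=J_0\bz J_1\bz J_2\bz\cdots$ with $J_1=M_Y\simeq Y$ and $J(M_Y,S^m)=\bigcup_{s\ge0}J_s$, and from Proposition~\ref{grdl}(5) the cofibre identifications $J_s/J_{s-1}=M_Y\wedge(S^m)^{\wedge(s-1)}\simeq\s^{m(s-1)}Y$. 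Using hypotheses (i)--(iii) I would then read off the cells of each quotient: since $Y$ is simply-connected (so its bottom cell lies in dimension $r\ge2$), of finite type, minimal, and of dimension $\dim Y\le m$, the space $\s^{m(s-1)}Y$ is again minimal, with cells exactly in the band $B_s:=[\,m(s-1)+r,\ m(s-1)+\dim Y\,]$.

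Next I would assemble a CW structure on $J(M_Y,S^m)$ compatible with the filtration. Inductively, the cofibration $J_{s-1}\hookrightarrow J_s$ with cofibre $\s^{m(s-1)}Y$ allows me to build $J_s$ from $J_{s-1}$ by attaching precisely the cells of $\s^{m(s-1)}Y$; thus $J(M_Y,S^m)$ acquires a CW structure in which every $J_s$ is a subcomplex and the cells of $J_s\setminus J_{s-1}$ occupy exactly the band $B_s$. The decisive quantitative point is that consecutive bands are separated: because $r\ge2$ and $\dim Y\le m$ one has $m(s-1)+\dim Y<ms+r$, so the top of $B_s$ lies strictly below the bottom of $B_{s+1}$. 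Hence the bands are pairwise disjoint and totally ordered by dimension, and no cell occupies any dimension in the gap $(\,m(s-1)+\dim Y,\ ms+r\,)$. As a homology cross-check, Proposition~\ref{grdl}(2) gives $\widetilde{H}_*(J(M_Y,S^m))\tg\widetilde{H}_*(Y)\otimes H_*(\lo S^{m+1})$, whose classes lie in exactly these bands.

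Finally, the gaps force the James filtration to agree with the skeletal one: for any $j$ lying in the gap above the $t$-th band, the $j$-skeleton contains precisely the cells of $B_1,\dots,B_t$, that is $\sk_j(J(M_Y,S^m))=J_t$; taking $j$ at the top of this gap, $j=mt+r-1$, yields the asserted identity. I expect the main (and essentially only) obstacle to be the passage from the homotopy-level filtration quotients of Proposition~\ref{grdl} to an honest dimensionwise skeletal filtration. This is exactly where minimality~(ii) and the dimension bound~(iii) are indispensable: minimality guarantees that $\s^{m(s-1)}Y$ contributes no cells outside $B_s$, while $\dim Y\le m$ together with $r\ge2$ guarantees that the separating gap is nonempty, so that the identification $\sk_j=J_t$ is unambiguous. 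I would finish by verifying these two boundary inequalities explicitly.
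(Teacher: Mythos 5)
Your strategy---identify the James filtration quotients $J_s/J_{s-1}\simeq \Sigma^{m(s-1)}Y$ via Proposition~\ref{grdl}(5), locate their cells in the bands $[m(s-1)+r,\,m(s-1)+\dim Y]$, and use $\dim Y\le m$ together with $r\ge 2$ to separate consecutive bands by a nonempty gap---is the right one; the paper itself offers no proof (the lemma is quoted from \cite{hp2}), so there is nothing on the paper's side to compare against beyond the ingredients of Proposition~\ref{grdl}, which you deploy as intended. One technical remark: the passage from the homotopy cofibrations $J_{s-1}\to J_s\to \Sigma^{m(s-1)}Y$ to an honest minimal CW filtration, which you correctly single out as the main obstacle, needs one more sentence than you give it. The natural CW structure on $J(M_Y,S^m)$ is a product structure and is not minimal, so the whole filtered space must be replaced by a homotopy-equivalent minimal one compatibly with the filtration. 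The input that makes this routine is the stable splitting $\Sigma J(M_Y,S^m)\simeq\Sigma\bigvee_{s\ge0}M_Y\wedge (S^m)^{\wedge s}$ of Proposition~\ref{grdl}(5), which shows that each $\widetilde{H}_{*}(J_{s-1};\mathbb{Z})\to\widetilde{H}_{*}(J_s;\mathbb{Z})$ is a split monomorphism with cokernel $\widetilde{H}_{*}(\Sigma^{m(s-1)}Y;\mathbb{Z})$; a standard homology-decomposition argument for simply connected complexes of finite type then produces the filtration-compatible minimal structure.

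More importantly: the formula you arrive at, $\mathrm{sk}_{mt+r-1}=J_t$, is not the formula in the statement, which reads $j=m(t-1)+r-1$, and you should not declare that it ``yields the asserted identity.'' Since $m(t-1)+r-1$ lies strictly below the bottom dimension $m(t-1)+r$ of the $t$-th band, your own band computation gives $\mathrm{sk}_{m(t-1)+r-1}=J_{t-1}$, not $J_t$; the statement as printed is off by one. Your version is the one consistent with how the lemma is actually applied later (Remark~\ref{Fdll}: with $m=n+k+1$, $r=n+1$, $t=2$ one gets $mt+r-1=3n+2k+2$ and $\mathrm{sk}_{3n+2k+2}(F)=F^{(2)}=J_2$, matching Equation~(\ref{f2dy}); the check for $t=3$ against Equation~(\ref{f3dy}) works the same way). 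So the discrepancy points to a typo in the statement rather than an error in your argument---but a blind proof of the statement must record this mismatch explicitly instead of silently substituting the corrected formula.
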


We make the following observation.

\begin{rem}\label{Fdll} 
    Recall that $f\in\pi_{n+k} (S^{n})$ and $F$ is the homotopy fiber of the pinch map $\s C_{f} \rightarrow  S^{n+k+2}$.  Set the map $\af=[\mathrm{id}_{S^{n+1}},\s f]\in\pi_{2n+k+1} (S^{n+1})$. By Proposition~\ref{grdl} and Lemma~\ref{j2}, we see that  $F=J(M_{S^{n+1}},  S^{n+k+1})$.  Its homology group is given by the group isomorphism   $$\widetilde{
    H}_{*} (F;\z)\tg \widetilde{
    H}_{*} (\bigvee_{i\geq0} S^{n+1} \wedge (S^{n+k+1})^{\wedge i};\z).$$  It follows  that  \begin{equation}\label{dafdy}F=(S^{n+1}\cup_{\af}  e^{2n+k+2})\cup_{\beta}  e^{3n+2k+3}\cup e^{4n+3k+4}  \cdots
\end{equation}
for $\af=[\mathrm{id}_{S^{n+1}},\s f]$ and some $\beta\in \pi_{3n+2k+2} (S^{n+1}\cup_{\af}  e^{2n+k+2})$. Denote by $F^{(m)}$ the skeleton of $F$ containing $m$ cells. So,\begin{equation}\label{f2dy}F^{(2)}=\sk_{3n+2k+2} (F)=S^{n+1}\cup_{\af}  e^{2n+k+2},\end{equation}\begin{equation}\label{f3dy}F^{(3)}=\sk_{4n+3k+3} (F)=(S^{n+1}\cup_{\af}  e^{2n+k+2})\cup_{\beta}  e^{3n+2k+3}.\end{equation}

\noindent  The maps $j_{3}$ and $j_{4}$  in Theorem \ref{hao1} are in fact given by the following sequence after localization at a prime $p$ different from 3,
    \begin{equation}\label{jhbbb}
        S^{3n+k+2} \overset{\,j_{4}\,}{\xyd} (S^{n+1}\vee S^{3n+k+2})\cup e^{5n+2k+3}\cup\cdots\stackrel{j_{3}}\longrightarrow \fe;
    \end{equation}
    \noindent here, $j_{3}$ is the homotopy fiber inclusion in the following  homotopy fibration,
   \begin{equation}\label{gyj3d5}
       (S^{n+1}\vee S^{3n+k+2})\cup e^{5n+2k+3}\cup\cdots\stackrel{j_{3}}\longrightarrow \fe\xrightarrow[]{pinch}S^{2n+k+2}.
    \end{equation}  
\noindent See  Lemma~\ref{lpgfl} for $\mathrm{hofib} ( \fe\xrightarrow[]{pinch}S^{2n+k+2})$. Although Lemma~\ref{lpgfl} has not yet been proved, we introduce its notation here only for convenience. Needless to say, we do not use it in any proof before proving it.\end{rem}

We need the following remark. (Throughout this paper,  the integer $n$ is fixed and satisfies $n \ge 2$, except in the following remark, where the weaker assumption $n \ge 1$ is imposed.)

\begin{rem}\label{gymt} \begin{itemize}
    \item[\rm (1)]  Let $X=S^{n+1}\cup e^{n+k+2}\not\simeq*$  ($n\geq1$ and $k\geq0$) be a space which is not required to be a suspension. 
If $i \ge 3n+2k+2$, we say that the groups $\pi_i(X)$ lie in the range in which the third cell of the homotopy fiber contributes; or, more briefly, that they lie in the $J_{3}$ range. 
This terminology is justified as follows. Applying Gray's result (Proposition \ref{grdl}), the homotopy fiber  of 
$X \xrightarrow[]{pinch} S^{n+k+1}$ admits a CW decomposition  
$$
J(M_{S^{n+1}},S^{n+k+1}) = S^{n+1} \cup e^{2n+k+2} \cup e^{3n+2k+3} \cup e^{4n+3k+4}\cup\cdots,
$$\noindent
(similar to Equation (\ref{jhaaa}) explained in the above remark). The third cell \[e^{3n+2k+3} \xyd J_{3} (M_{S^{n+1}}, S^{n+k+1})\xyd J(M_{S^{n+1}}, S^{n+k+1})\] contributes the groups $\pi_i(X)$ with $i \ge 3n+2k+2$.
\item[\rm (2)] Let $X$ be an $((n+1)-1)$-connected space with $\pi_{n+1} (X)\neq0$. Usually, the groups $\pi_{i} (X)$ with $2n+1\leq i\leq 3n+1$ (sometimes just with $i\leq 3n+1$)  are said to be in the metastable range. In this range, the generalized $EHP$ sequence applies and the canonical map $X\hookrightarrow J(X)$ can be identified with its restriction $X\hookrightarrow J_{2} (X)$. See the standard materials such as Hatcher \cite[p.~474]{xh} and Whitehead \cite[Theorem 2.2, p.~548]{GW}.
\item[\rm (3)]Use the notation in assertion (1). In general, for  groups $\pi_{*} (X)$, the condition of being in the $J_{3}$ range is strictly stronger than that of being outside the metastable range. Each group $\pi_i(X)$  in the $J_{3}$ range is outside  the metastable range. 
Conversely, if  $\pi_i(X)$ is  unstable and  outside  the metastable range, i.e., $i\geq 3n+2$, then $\pi_i(X)$ may not be in the $J_{3}$ range. Note  that $[3n+2k+2,\wq]\subsetneq [3n+2,\wq]$ when $k\geq1$.\end{itemize}
\end{rem}

\subsection{A lemma on the relation between the cofibration and the fibration}
The following lemma seems well-known to experts. However, we have not found a citable reference containing a proof, so we include it here.
\begin{lem}\label{zzd} Localize spaces at  a prime $p$. Let
\[
S^{m}\stackrel{g}\longrightarrow Y\stackrel{i}\hookrightarrow X = Y\cup_{g} e^{m+1}
\]
be a  cofiber sequence, where $Y$ is a simply-connected CW complex of finite type with $\dim(Y)\le m$; if $\mathrm{dim} (Y)=m$, additionally assume that the homomorphism $g_{*}=H_{m} (g)$ is trivial.
 Denote the homotopy fiber of $Y\stackrel{i}\hookrightarrow X$ by $F_{i}$. Let $S^{m}=\sk_{m} (F_{i})\stackrel{\,j'}\hookrightarrow F_{i}$ and $F_{i}\stackrel{j}\rightarrow Y$ be the  skeleton inclusion and  the homotopy fiber inclusion, respectively. Then  $ g= jj'\in\pi_{m} (Y)$
up to a unit in  $\z_{(p)}$.  
\end{lem}
\begin{proof} By observing the $\z/p$-homology Leray-Serre spectral sequence associated with the homotopy fibration $F_{i}\rightarrow Y\rightarrow X$, we infer $\sk_{m} (F_{i})=S^{m}.$ Consider the exact sequence $\pi_{m} (F_{i})\stackrel{j_{*}}\longrightarrow \pi_{m} (Y)\stackrel{i_{*}}\longrightarrow \pi_{m} (Y\cup_{g} e^{m+1})$, that is, the exact sequence 
$\x \,j'\,\xx\stackrel{j_{*}}\longrightarrow \pi_{m} (Y)\rightarrow \pi_{m} (Y)/\x g\xx$.  (Here, we make free use of the fundamental techniques for computing homotopy groups; see Remark~\ref{jssq} for an account of these techniques.) The exactness forces  $\x\, j_{*} (j')\,\xx=\x g\xx$. Hence the result holds.
    \end{proof}

\subsection{The$\m p$ spherical elements in homology groups}
There is a long history of studying$\m p$ spherical elements. For example, see Fred Cohen \cite[p.~77]{cds} and Wu \cite[pp.~17--18]{wu2003}. A nontrivial$\m p$ spherical element can be used to decompose the skeleton of the CW complex.

\begin{lem}\label{sfrc}
Localize spaces at  a prime $p$. Let $X$ be a simply-connected CW complex of finite type and write  $H_{m} (S^{m})=\x \e\xx$. Suppose that there exists a map  $g\in\pi_{m} (X)$  with $m\geq2$ such that  $0\neq g_{*} (\e)\in H_{m} (X)$. (Such an element $g_{*} (\e)$, a$\m p$ Hurewicz image,  is said to be$\m p$ spherical.)  Then the following assertions hold.
\begin{itemize}
    \item[\rm(1)] $\sk_{m} (X)\simeq K\vee S^{m}$ for some simply-connected  CW complex $K$  with  $\mathrm{dim} (K)\leq m$. 
     \item[\rm(2)] The$\m p$ spherical splitting is natural.  Namely, let $h\colon X\rightarrow Y$ be a map where $Y$ is a simply-connected CW complex of finite type. If  $0\neq h_{*}g_{*} (\e)$ in $ H_{m} (Y)$, then, up to a unit in $\z_{(p)}$, there exists a commutative diagram  in which $h_{m}$ denotes the restriction of $h$, \vspace{-1.5\baselineskip}\;\\ \[\begin{tikzcd}
		S^m 
		\arrow[r,hook]
		\arrow[d, equal]
		\arrow[rrr, bend left=20, "g"] 
		& K \vee S^m \arrow[d]
		\arrow[r, "\simeq"]
		& \sk_m X 
		\arrow[r, hookrightarrow]  % inclusion
		\arrow[d, "h_{m}"]
		& X
		\arrow[d, "h"]
		\\
		S^m
		\arrow[r,hook]
		\arrow[rrr, bend right=20, " h\hc g "]
		& K' \vee S^m
		\arrow[r, "\simeq"]
		& \sk_m Y
		\arrow[r, hook]  % inclusion
		& Y.
	\end{tikzcd}
	\]

\end{itemize}
    \end{lem}
\begin{proof}
The result is standard, so we give a streamlined proof.
\begin{itemize}
    \item [\rm (1)]Let the map $g_{m}\colon  S^{m}\rightarrow\sk_{m} (X) $ be the restriction of $g$. It is sufficient to  assume that there exists a cell $e^{m}\xyd X$ such that the composition $$q\hc g_{m}\colon   S^{m}\stackrel{g_{m}}\longrightarrow\sk_{m} (X) \xrightarrow[proj.]{q}\frac{\sk_{m} (X)}{\sk_{m} (X)\backslash e^{m}}=S^{m}$$
induces an isomorphism of the $m$-th$\m p$ homology groups. (If $X$ has multiple $m$-cells, such a desired cell $e^{m}$ may not exist; nevertheless, by using some self homotopy equivalence in $[\bigvee _{finite}S^{m-1},\bigvee _{finite}S^{m-1}]$, we see that there exists a CW complex $M$  such that $M\simeq\sk_{m} (X)$ and $M$ has  such a desired cell $e^{m}$.  If $X$ has only one $m$-cell $e^{m}$, the desired property of $e^{m}$ holds automatically.) Then,  we obtain $q\hc g_{m}=\mathrm{id}_{S^{m}}$ up to a unit in $\z_{(p)}$.
Choose $K$ as $\sk_{m} (X)\backslash e^{m}$. Consequently, we have a cofiber sequence for some attaching map $\mathrm{a}$, \[ S^{m-1}\stackrel{\mathrm{a}}\longrightarrow K\rightarrow\sk_{m} (X)\stackrel{q}\longrightarrow S^{m}.\] Applying the Blakers--Massey theorem or Gray's relative James construction (Proposition \ref{grdl}), we deduce an exact sequence 
 \[ \pi_{m} (\sk_{m} (X))\stackrel{q_{*}\,}\longrightarrow \pi_{m} (S^{m})\stackrel{\pa}\rightarrow \pi_{m-1} (K).\]
Since $q_{*}=\pi_{m} (q)$ is surjective, we have     $\pa(\mathrm{id_{S^{m}}})=\mathrm{a}=0$.\item [(\rm 2)] Similar to the proof of assertion (1), it is sufficient to  assume that  $Y$ has a  cell $e^{m}$ corresponding to the generator $(hg)_{*} (\e)\in H_{m} (Y)$.  Take $K'$ as the CW complex $\mathrm{sk}_{m} (Y)\backslash e^{m}$. The remaining parts  can be checked by a standard diagram chase. The situation is analogous to that in the category of  abelian groups.
\end{itemize}\end{proof}

The following clarification is necessary and should be made.

\begin{rem} 
   The  statement  suggested by intuition that  ``up to  a unit in $\z_{(p)}$, the$\m p$ spherical element   $0\neq g_{*} (\e)\in H_{m} (X)$   uniquely determines the restriction  $g_{m}\colon  S^{m}\rightarrow\sk_{m} (X)$  of the map $g\in\pi_{m} (X)$'' is wrong. That is, the claim that ``using the symbols of the above lemma, if  we also have $g'\in \pi_{m} (X)$ and $0\neq g'_{*} (\e)\in H_{m} (X)$, then $g_{m}=g'_{m}$ up to  a unit in $\z_{(p)}$'' is false. Consider the case $p=2$, $m=5$, $g= \begin{bmatrix}
\ca_{4} \\
\mathrm{id}_{S^{5}}
\end{bmatrix}\in \pi_{5} (S^{4}\vee S^{5})$ and $g'= \begin{bmatrix}
0 \\
\mathrm{id}_{S^{5}}
\end{bmatrix}\in \pi_{5} (S^{4}\vee S^{5})$, where $\ca_{4}\in\pi_{5} (S^{4})$ is of order 2. However, the$\m p$ spherical element   $0\neq g_{*} (\e)\in H_{m} (X)$   uniquely determines the restriction $g_{m}$ up to  a unit in $\z_{(p)}$ and a homotopy equivalence of the form \[(\sk_{m} (X)\simeq)K\vee S^{n}\rightarrow K'\vee S^{n} (\simeq\sk_{m} (X)).\] Note that  $$\begin{bmatrix}
\mathrm{id}_{S^{4}}& \ca_{4} \\
0&\mathrm{id}_{S^{5}}
\end{bmatrix}\begin{bmatrix}
\ca_{4} \\
\mathrm{id}_{S^{5}}
\end{bmatrix}=\begin{bmatrix}
0 \\
\mathrm{id}_{S^{5}}
\end{bmatrix}$$ for the above example.
     \end{rem}

As an application of the$\m p$ spherical element, we present the following proposition.
It is a reformulation and generalization of \cite[Proposition~2.1, p.~3249]{CW2013}, whose original proof is partially incorrect. (We denote their $\af$ and $n$ by $g$ and $r$, respectively.)   

This proposition plays an essential role in the proof of the main theorem; in Lemma~\ref{q3max}, it will be used to show that $\mathrm{SQ}_{3}^{\mathrm{max}} (C_{f})\simeq\s^{2n+k+2}C_{f}$ for $p\neq3$.

\begin{pro} \label{tl}Localize  spaces at  a prime $p$. Let $X=S^{m}\cup _{h}e^{r} \not\simeq*$ be given where $1\leq m<r$. Set the homology
$\widetilde{H}_*(X)=\mathrm{Span}_{\z/p}\n u,v\nn$ where    $|u| = m$ and  $|v| = r$. Denote by $\e_{m+r}$ the generator of $ H_{m+r} (S^{m+r})\tg\z/p.$
 \begin{itemize} 

\item [\rm(1)] If $p=2$,  then there exists a map $g \colon S^{m+r} \longrightarrow X \wedge X$ such that $$0\neq g_{*} (\e_{m+r})=uv-vu=[u,v]\in H_{m+r} (X \wedge X  )).$$
 \item [\rm (2)] If $p\geq3$, then there exists a map $g \colon S^{m+r} \longrightarrow X \wedge X$ and some integer $t\in [1,p-1]$
 such that $$0\neq g_{*} (\e_{m+r})=uv+tvu\in H_{m+r} (X \wedge X  )).$$

\end{itemize}
    
\end{pro}

\begin{proof}  Consider the homotopy cofibration $X\wedge S^{r-1} \xrightarrow{\mathrm{id}_X\wedge h} X\wedge S^m\xrightarrow{\mathrm{id}_X\wedge i} X\wedge X$  where       $i\colon S^{n}\rightarrow X$ denotes the inclusion of the bottom cell.
\noindent Take the $(2r-1)$--skeleton of $X\wedge X$ and note that $S^m\wedge S^{r-1} \simeq \mathrm{sk}_{2r-2} (X\wedge S^{r-1})$;
hence we obtain a homotopy cofibration \begin{equation}\label{cfb1}
    S^m\wedge S^{r-1} \xrightarrow{i\wedge h} X\wedge S^m
\to \mathrm{sk}_{2r-1} (X\wedge X).
\end{equation}
\noindent  Next we will show that $i\wedge h=*$. Since $i\wedge h$
is equal to the composition
\[
S^m\wedge S^{r-1}
\xrightarrow{\mathrm{id}_{S^m}\wedge h}
S^m\wedge S^m
\xrightarrow{i\wedge \mathrm{id}_{S^m}}
X\wedge S^m,
\]
we obtain the  commutative diagram the with the bottom row a homotopy cofibration
\[
\begin{tikzcd}[column sep=large,row sep=large]
S^{m}\wedge S^{r-1}
\arrow[r,"\mathrm{id}_{S^{m}}\wedge h"]
\arrow[d,"\tau'"']
&
S^{m}\wedge S^{m}
\arrow[d,"\tau"]
\\
S^{r-1}\wedge S^{m}
\arrow[r,"h\wedge \mathrm{id}_{S^{m}}"]
&
S^{m}\wedge S^{m}
\arrow[r,"i\wedge \mathrm{id}_{S^{m}}"]
&
X\wedge S^{m},
\end{tikzcd}
\]
\noindent   
where  $\tau'$ and $\tau$ are the switching maps.   So, $$(i\wedge \mathrm{id}_{S^{m}})\hc[\pm 1]\hc(\mathrm{id}_{S^{m}}\wedge h)=(i\wedge \mathrm{id}_{S^{m}})\hc(h\wedge \mathrm{id}_{S^{m}})\hc[\pm 1]=*.$$
\noindent The left side of this equation is just $\pm(i\wedge h)$.
Thus, $i\wedge h=*$ in the homotopy cofibration (\ref{cfb1}). Therefore $$\mathrm{sk}_{2r-1} (X\wedge X)\simeq (\s ^{m}X)\vee S^{m+r}.$$
\noindent   
Define $g'$ to be the composition of the  inclusions $S^{m+r}\stackrel{i_{0}}\hookrightarrow \mathrm{sk}_{2r-1} (X\wedge X)
\hookrightarrow X\wedge X.$ Then  $0\neq g'_{*} (\e_{m+r})$ is$\m p$ spherical. 

Consider  the case $p=2$. If $\Sigma^{m}X\not\simeq S^{2m}\vee S^{m+r}$, then neither $uv$ nor $vu$ is$\m2$ spherical in $$H_{m+r} (X\wedge X )=\n0,uv,uv,[u,v]\nn,$$ which forces
$[u,v]$ to be$\m2$ spherical. Thus $g'_{*} (\e_{m+r})=[u,v]$. In this case, take $g=g'.$ If $\Sigma^{m}X\simeq S^{2m}\vee S^{m+r}$, then all elements in $H_{m+r} (X\wedge X )$  are$\m2$ spherical; note that we can make use of the self homotopy equivalences in $[S^{m+r}\vee S^{m+r},S^{m+r}\vee S^{m+r}]$ to change the basis of $H_{m+r} (\mathrm{sk}_{2r-1} (X\wedge X))$. In this case, take $g$ to be the composition $$S^{m+r}\stackrel{i_{0}}\hookrightarrow
\mathrm{sk}_{2r-1} (X\wedge X)\xrightarrow[\simeq]{e} \mathrm{sk}_{2r-1} (X\wedge X)
\hookrightarrow X\wedge X$$
for some self homotopy equivalence $e$ in $[\mathrm{sk}_{2r-1} (X\wedge X),\mathrm{sk}_{2r-1} (X\wedge X)]\tg[S^{2m}\vee S^{m+r}\vee S^{m+r},S^{2m}\vee S^{m+r}\vee S^{m+r}].$

Consider  the case $p\geq3$.  If $\Sigma^{m}X\not\simeq S^{2m}\vee S^{m+r}$, then  neither $uv$ nor $vu$ is$\m p$ spherical in $$H_{m+r} (X\wedge X )=\n xuv,yuv\;|\;0\leq x\leq p-1,\;0\leq y\leq p-1\nn,$$ which forces $g_{*} (\e_{m+r})=t_{1}uv+t_{2}vu$  for some integers $t_{1},t_{2}\in [1,p-1].$  Then choose $g=\frac{1}{t_{1}}g'$, satisfying the desired property. If $\Sigma^{m}X\simeq S^{2m}\vee S^{m+r}$, then all elements in $H_{m+r} (X\wedge X )$  are$\m p$ spherical; similar to  the case $p=2$, we can find the desired map $g$ by using the  self homotopy equivalence.\end{proof}

 Here is reason why the proof of \cite[Proposition~2.1, p.~3249]{CW2013}  is incorrect. Let $p=2.$
 The last sentence of its proof claims that, since $[u,v]$ is the only primitive generator of
$\widetilde{H}_{*} (X\wedge X )$, one has
\begin{equation}\label{cdc}
g_{*} (\e_{m+r})=[u,v]\in \widetilde{H}_{*} (X\wedge X ).
\end{equation}

\noindent The justification for this claim (i.e., ``since $[u,v]$ is the only primitive generator") is incorrect, although the conclusion itself is correct.
If 
 $\Sigma^{m}X\simeq S^{2m}\vee S^{m+r}$, then the elements $uv$, $vu, [u,v]$ are all$\m2$ spherical and hence  $uv$, $vu, [u,v]$ are all  primitive generators of
$\widetilde{H}_{*} (X\wedge X )$.

\section{The homology of the looped homotopy fiber}\label{seccmn}
In this section, 
 spaces are assumed to be  localized  at an arbitrary  prime $p$.

In order to prove the generalized results of Cohen-Moore-Neisendorfer,
we need the following proposition \cite[Theorem 4.4.6, p.~171]{hopfjc} to examine the Hopf module structure. It should be understood that  Proposition~\ref{emsdy} remains valid in both the graded and ungraded settings, since the proof is    essentially unchanged in either working category.

To state Proposition \ref{emsdy} \cite[Theorem 4.4.6, p.~171]{hopfjc}, let us recall the definition  of Hopf modules \cite[Definition 4.4.1, p.~169]{hopfjc}.
Take the ground ring to be a field. Let $A$ be a Hopf algebra. A vector space $M$ is called a right $A$-Hopf module if $M$ has a right $A$-module structure (the action of  $\af \in A$ on $\beta \in M$ will be denoted by $\beta \star\af$), and a right $A$-comodule structure, given by the map $\rho: M \longrightarrow M \otimes A,\;\;
\rho(\beta)=\sum \beta_{(0)}\otimes \beta_{(1)}$, such that for any $\beta\in M$ and $\af\in A$, $$\rho(\beta\star \af)=\sum (\beta_{(0)}\star \af_{1})\otimes (\beta_{(1)} \af_{2}),$$ where $\beta_{(1)} \af_{2}$ is the product of $\beta_{(1)}$ and $ \af_{2}$ in the Hopf algebra $A$. Note that  $\sum \beta_{(0)}\otimes \beta_{(1)}$ and $\sum \af_{1}\otimes \af_{2}$ denote the Sweedler notation for coactions and coproducts, respectively, see \cite[p.~5, p.~66]{hopfjc}.

\begin{pro0}\label{emsdy} (\textbf{The fundamental theorem of Hopf modules}) Take the ground ring to be a field.
 Let $A$ be a Hopf algebra, and let  $M$ be a right $A$-Hopf module with the comodule structure map $\rho: M \longrightarrow M \otimes A.$ Define the vector subspace of coinvariants of $M$ to be $M^{\mathrm{co}A}
=\{\, \beta \in M \mid \rho(\beta)=\beta\otimes 1 \,\}.$
Then, the map $\kj\colon M^{\mathrm{co}A} \otimes A \longrightarrow M,$ defined by $\kj(\beta \otimes \af) = \beta \star \af,$ is an isomorphism of right $A$-Hopf modules.\hfill\boxed{}
\end{pro0}

In the context of a tensor algebra, the element denoted by $\mathrm{ad}^{m} (y)(x)$,
namely the $m$-fold Lie bracket arranged in a left-nested manner,
with $x$ occupying the leftmost entry and $y$ occupying the remaining entries,
and with $\mathrm{ad}^{0} (y)(x)=x$,
is explained in Section~\ref{secconv}.\footnote{\textbf{Warning:} In \cite{CMN1979}, the notation $\mathrm{ad}^{m} (y)(x)$ refers to the
right-normed Lie bracket.
Our convention uses left-normed.
The two conventions agree up to sign and have the same behavior.} Recall that  by a Hopf algebra
$T(V)$ we mean  the Hopf algebra obtained by letting all elements of 
$V$ be primitive.

We now state the following theorem, which is a generalization of  Cohen, Moore and Neisendorfer's result \cite[Corollary~9.4, p.~150]{CMN1979}.
 \begin{thm0}
 \label{dlydexj}  Let  $X=S^{m}\cup e^{r}$ be given where $2\leq m< r.$ Let $F_{q}$ be the homotopy fiber of the pinch map $q\colon  X\rightarrow S^{r}.$ If   $H_{*} (\lo  X)\tg T(u,v)$ 
as Hopf algebras in which $|u|=m-1,|v|=r-1$,  then  the fiber sequence $\lo F_{q}\xrightarrow[]{\lo j} \lo X \xrightarrow[]{\lo q}\lo S^{r}$ induces  a short exact sequence of Hopf algebras
\[H_{*} (\lo F_{q})\xrightarrow[]{(\lo j)_{*}} H_{*} (\lo X) \xrightarrow[]{(\lo q)_{*}}H_{*} (\lo S^{r}).\] 
\noindent
Moreover, there exists an isomorphism of Hopf algebras
$$H_{*} (\lo F_{q})\tg T(\n\mathrm{ad}^{i} (v)(u)\;|\; i\geq0\nn)= T(u,\,[u,v],\,[[u,v],v],\,[[[u,v],v],v],\;\cdots).$$      
\end{thm0}

\begin{proof} In fact, this theorem is essentially due to Cohen-Moore-Neisendorfer \cite[Proposition~9.3, Corollary~9.4]{CMN1979}. The goal of our proof is to verify that their method (which is outlined quite briefly, with numerous steps  omitted) remains effective in the context of the  generalized hypotheses. The verifications are divided into 4 steps.  Set the homology $H_{*} (\lo S^{r} )=T(\e)$ where $|\e|=r-1$.\;\vspace{-0.5\baselineskip}\;\\\\
\noindent\textbf{STEP-1} Examine the $(\lo q)_{*}$-images.  We show that $(\lo q)_{*} (u)=0$ and  $(\lo q)_{*} (v)=\e$  up to a unit in $\z/p$;  hence we may reset the homology $H_{*} (\lo S^{r} )=T(\e)$ with $|\e|=r-1$   and  $(\lo q)_{*} (v)=\e.$ 

First we analyze $(\lo q)_{*} (v)$. Consider the $\z/p$-homology Leray-Serre spectral sequence $\n E^{\bullet}_{\bullet,\bullet},d_{\bullet}\nn$ associated with the homotopy fibration $\lo X\rightarrow*\rightarrow X$. By  Proposition \ref{mdpxl}, this  spectral sequence is a  $H_{*} (\lo X)$-module spectral sequence, $d(ab)=(da)b$. Set $\widetilde{H}_{*} (X)=\mathrm{Span}_{\z/p}\n u', v'\nn$ with $|u'|=m,\,|v'|=r$ and $d_{m} (u')=u$. In the $E^{m}$-page, $d(u')=u$, $d(u'\af)= (du')\af= u\af$ for each $\af\in E^{m}_{0,\bullet}$. Note that in $E^{m}_{0,\bullet}\tg E^{2}_{0,\bullet}$, $v\neq u\af$ for each  $\af\in E^{m}_{0,\bullet}$. So $v\in E^{2}_{0,r-1}$ survives to $E^{2m}_{0,r-1}.$ Then we infer that  $d(v')=\ell v$ for some $1\leq\ell\leq p-1$. Notice that there is an obvious momorphism from the homtopy fibration $\lo X\rightarrow*\rightarrow X$ to the homtopy fibration $\lo S^{r}\rightarrow*\rightarrow S^{r}$ induced by the map $q$.
Thus,
 $(\lo q)_{*} (v)=\ell\e$,  
resulting from the  naturality of the transgressions of the Leray-Serre spectral sequences. By this naturality, we also obtain $(\lo q)_{*} (u)=0$.
\;\vspace{-0.5\baselineskip}\\

\noindent\textbf{STEP-2} We proceed to justify that the comodule $H_{*} (\lo X )$ is an injective right $H_{*} (\lo S^{r} )$-comodule.  Write tensors $a\otimes b$ in $T(u,v)$ as $ab$ rather than $a\otimes b$.  At the purely algebraic level, we may use the identification of Hopf algebras $$H_{*} (\lo S^{r} )=T(v)\xyd T(u,v).$$ Additionally,  $H_{*} (\lo X )=T(u,v)$ is a right $H_{*} (\lo S^{r} )$-module whose module structure map is given by $$T(u,v)\otimes T(v)\rightarrow T(u,v),\;a\otimes b\mapsto ab.$$ \noindent That is, $a\star b=ab$. Meanwhile, $H_{*} (\lo X) $ is a right $H_{*} (\lo S^{r})$-comodule  with its canonical  comodule structure map $$\rho_{*}\colon  H_{*} (\lo X )\rightarrow H_{*} (\lo X)\otimes H_{*} (\lo S^{r} ),\;\text{where}\; \rho=(\lo\mathrm{id},\lo q)\colon \lo X\rightarrow \lo X\times \lo S^{r}.$$ Observe that $\rho=(\lo\mathrm{id},*)\cdot (*,\lo q)$, the product of maps resulting from the $H$-structure. So,  $$\rho_{*} (u)=u\otimes1, \;\;\rho_{*} (v)=v\otimes1+1\otimes v=\Delta_{*} (v),$$ \noindent where $\Delta\colon \lo S^{r}\rightarrow \lo S^{r}\times \lo S^{r}$ denotes the diagonal map. We know that $\rho_{*}$ and $\Delta_{*}$ are algebra homomorphisms since $\rho$ and $\Delta$ can be identified with the loop maps. Let $i\geq1$. We have $$\rho_{*} (v^{i})=(\rho_{*} (v))^{i}=(\Delta_{*} (v))^{i}=\Delta_{*} (v^{i})=\sum (v^{i})_{1}\otimes (v^{i})_{2}.$$
\noindent
Take an arbitrary $\beta\in T(u,v)=H_{*} (\lo X )$. Then, $\rho_{*} (\beta)=\sum \beta_{(0)}\otimes\beta_{(1)}$. Hence, $\rho_{*} (\beta\star v^{i})$ is equal to $$\rho_{*} (\beta v^{i})=\rho_{*} (\beta)\rho_{*} ( v^{i})=(\sum \beta_{(0)}\otimes\beta_{(1)})(\sum (v^{i})_{1}\otimes (v^{i})_{2})=\sum (\beta_{(0)} (v^{i})_{1})\otimes(\beta_{(1)} (v^{i})_{2})=\sum (\beta_{(0)}\star(v^{i})_{1})\otimes(\beta_{(1)} (v^{i})_{2}).$$

\noindent So, $H_{*} (\lo X )$ is a  right $H_{*} (\lo S^{r} )$-Hopf module. Then, by Proposition \ref{emsdy}, we see that $$H_{*} (\lo X )\tg (H_{*} (\lo X ))^{\mathrm{co}\,H_{*} (\lo S^{r} )} \otimes H_{*} (\lo S^{r} )$$ 
\noindent as right $H_{*} (\lo S^{r} )$-Hopf modules. It follows that  $H_{*} (\lo X )$ is   a free  right $H_{*} (\lo S^{r} )$-comodule, therefore an injective  right $H_{*} (\lo S^{r} )$-comodule. (Recall that a right $C$-comodule $M$ is said to be free if $M\tg N\otimes C$ as right $C$-comodules for some vector space $N$.)\\ \vspace{-0.6\baselineskip}\\
\noindent\textbf{STEP-3} Use the Eilenberg-Moore  spectral sequence.

Applying  \cite[Theorem~3.2]{ms1} and \cite[Proposition~3.1]{ms1}  for the homotopy pull-back of $\lo X\rightarrow\lo S^{r} \leftarrow *$,   we have a  spectral sequence $\n E^{\bullet}_{\bullet,\bullet},d_{\bullet}\nn\Rightarrow H_{*} (\lo F_{q})$ with $$E^{2}\tg\mathrm{Cotor}^{H_{*} (\lo S^{r})} ( H_{*} (\lo X),\z/p)\text{\,\;as differential (graded) Hopf algebras}.$$ 
The deleted injective resolution of $H_{*} (\lo X)$ contains a unique  nonzero item, $H_{*} (\lo X)$. Then the  differential graded Hopf algebra $\mathrm{Cotor}^{H_{*} ( \lo S^{r})} ( H_{*} (\lo X),\z/p)$  has only trivial differentials and hence the spectral sequence collapses at $E^{2}$-page. Combining  with STEP 2 and the definition of the functor 
$\mathrm{Cotor}$,  we have  $$H_{*} (\lo 
 F_{q})\tg H_{*} (\lo X)\boxed{}_{H_{*} (\lo S^{r})}\z/p= T(u,v)\boxed{}_{T(\e)}\z/p\text{\; as  Hopf algebras.}$$

\noindent\textbf{STEP-4}  We justify that 
$T(u,v)\,\boxed{}_{ T(\e)} \z/p=T(\n\mathrm{ad}^{i} (v)(u)\;|\; i\geq0\nn).$  (Here, we use ``='' since $T(u,v)\,\boxed{}_{ T(\e)} \z/p$ is canonically viewed as as a subset of $T(u,v).$)

In order to complete this step, we need
   carry out a calculation of the Euler-Poincar\'e series, parallel to the calculation in the proof of \cite[Proposition~4.5, p.~135]{CMN1979}. (Regrettably, the symbols $V$ and $W$  are used with quite different meanings in \cite[Lemma~3.13, p.~132]{CMN1979} and \cite[Corollary 3.11, p.~131]{CMN1979}, which may cause confusion. Here, we follow the convention of \cite[Lemma~3.13, p.~132]{CMN1979}.)

Let $L(-)$ be the free Lie algebra functor. Consider the  epimorphism $L(u, v)\rightarrow L(v)$ given by $u\mapsto0,\; v\mapsto v$. Let $L_{0}=L(V')$   be the kernel of this epimorphism. (Note that this kernel is freely generated; this follows from  
\cite[Corollary~3.11, p.~132]{CMN1979}.) Then we have a short exact sequence of free Lie algebras \vspace{-0.5\baselineskip}
\[\begin{tikzcd}
L_0 \arrow[r] 
    \arrow[d, equals] 
  & L \arrow[r] 
      \arrow[d, equals] 
  & L'' 
      \arrow[d, equals] \\L(V')  
  & L(u, v)  
  & L(v).
\end{tikzcd}\]
\noindent Applying the universal enveloping algebra functor $U(-)$, we have a short exact sequence of Hopf algebras,
\[\begin{tikzcd}
A' \arrow[r] 
    \arrow[d, equals] 
  & A \arrow[r] 
      \arrow[d, equals] 
  & A'' 
      \arrow[d, equals] \\UL(V')  
  & UL(u, v)  
  & UL(v).
\end{tikzcd}\]
\noindent We calculate the  Euler-Poincar\'e series of $V'$ following \cite[Corollary 3.11, p.~132]{CMN1979}, $$\chi (V)=1-(\sum_{i=0}^{\wq} (t^{r-1})^{i})\cdot(1-t^{m-1}-t^{r-1})=\sum_{i=1}^{\wq}\,t^{m-1+i(r-1)}.$$
\noindent Note that the Hopf algebra $T(V')=UL(V')$. By \cite[Lemma~3.12, Lemma~3.13, p.~132]{CMN1979}, we see that the module  of indecomposable elements  $$Q(A')=Q(UL(V'))=Q(T(V'))=V'$$ becomes a free 
$T(v)$-module  with $v$ acting on $V'$ by 
$\mathrm{ad}^{i} (v)(-)$.  Thus,   $V'$ has $\mathrm{ad}^{i} (v)(u)\; ( i\geq0)$ as a $\z/p$-basis. Then, by the definition of short exact sequences of Hopf algebras (\cite[Definition 3.3, p.~129]{CMN1979}),  we obtain  $$T(u,v)\,\boxed{}_{ T(\e)} \z/p=UL(V'),$$ \noindent completing the proof. \end{proof}
\;\\ \indent The ensuring corollary follows from    the construction of  Eilenberg-Moore spectral sequence, (see the second diagram in Eilenberg-Moore \cite[p.~219]{EM}).

\begin{cor0}\label{gylpj} Follow the notation in Theorem \ref{dlydexj} and use the identification $H_{*} (\lo F_{q})= T(\n\mathrm{ad}^{i} (v)(u)\;|\; i\geq0\nn).$ Let $j\colon   F_{q}\rightarrow X$ be the inclusion of the homotopy fiber in the homotopy fibration $F_{q}\stackrel{j}\rightarrow X \xrightarrow[pinch]{q}S^{r}.$
    Then,  for each $\mathrm{ad}^{i} (v)(u)\in H_{*} (\lo F_{q})$, we have $(\lo j)_{*} (\mathrm{ad}^{i} (v)(u))=\mathrm{ad}^{i} (v)(u)\in  H_{*} (\lo X).$ \hfill \boxed{}
\end{cor0}

In Cohen-Moore-Neisendorfer \cite[Corollary~9.4, p.~150]{CMN1979}, the space $P^{m} (p^{r}):=S^{m-1}\cup _{p^{r}} e^{m}$ where  $p$ is a prime, the Moore space of dimension $m$; and their  $F^{m}\{p^{r}\}$ stands for the homotopy fiber of the pinch map $P^{m} (p^{r})\rightarrow S^{m}$. The ground ring for the  homology is $R=\z/p^{s}$ with $1\leq s\leq r$ when $p$ is odd, and $R=\z/2$ when $p=2$. They take the elements $u,v$ as  $\x u\xx=H_{m-2} (P^{m-1} (p^{r});R), \;\x v\xx=H_{m-1} (P^{m-1} (p^{r});R).$ Observe that our $(m,r)$ for the space $X=S^{m}\cup e^{r}$ corresponds to their $(m-1,m)$ for the space $P^{m} (p^{r})=S^{m-1}\cup _{p^{r}} e^{m}$, and hence our $(u,v)$ corresponds to their $(u,v)$ in this sense.  The following is the original statement of  Cohen-Moore-Neisendorfer \cite[Corollary~9.4, p.~150]{CMN1979}:\\

``\emph{
If $m\geq 3$, then $H_{*} (\Omega F^{m}\{p^{r}\};R)$ is isomorphic, as a Hopf algebra, to the tensor algebra generated by the primitive elements
$x_{k}=\mathrm{ad}^{\,k-1} (v)(u),\; k\geq 1$}.''\\\\
\noindent However, there is a small oversight in the statement of their corollary. The condition of their corollary  $``m\geq3$'' should be revised as ``$m\geq3$ but $(m,p^{r})\neq(3,2)$''. The reason is given as follows. If $(m,p^{r})=(3,2)$, then $$H_{*} (\lo P^{m} (p^{r});R)=H_{*} (\lo P^{3} (2);\z/2)=H_{*} (\lo (S^{2}\cup_{2}e^{3});\z/2)=H_{*} (\lo \s \mathbb{R}P^{2};\z/2).$$

\noindent But $H_{*} (\lo \s \mathbb{R}P^{2};\z/2)\tg T (\widetilde{H}_{*} (\mathbb{R}P^{2};\z/2 ))$ as algebras, not as  Hopf algebras (see Example \ref{gyprmtv} (1)). 
Their proof does not apply to this case. Moreover, in this case, $|[u,v]|=3$ but as a matter of fact  there is no primitive element in $H_{3} (\lo \s \mathbb{R}P^{2};\z/2)\textbackslash\n0\nn$. (We omit the long but basic calculation for determining primitives in $H_{3} (\lo \s \mathbb{R}P^{2};\z/2)$.) Note that our proof follows the same basic strategy as theirs; the main difference is that we make a number of intermediate steps explicit.

The additional condition $(m,p^{r})\neq(3,2)$ ensures that their $H_{*} (\lo P^{m} (p^{r});R)\tg T \big (\widetilde{H}_{*} ( P^{m-1} (p^{r});R )\big )$ as Hopf algebras. Note that when $m=3$ and $p$ is an odd prime, we have the ground ring $R=\z/p^{s}$ with $1\leq s\leq r$ and the space $P^{m-1} (p^{r})=S^{1}\cup_{p^{r}} e^{2}$; therefore $\widetilde{H}^{*} (P^{m-1} (p^{r});R )=\widetilde{H}^{*} (S^{1}\cup_{p^{r}} e^{2};\z/p^{s} )$ only has trivial cup products according to $x\cup x=(-1)^{|x|^{2}}x\cup x.$

In the case $(m,p^{r})=(3,2)$, Cohen-Moore-Neisendorfer's \cite[Corollary~9.4, p.~150]{CMN1979} may perhaps be revised as follows:\\\\
``\emph{There is a   Hopf algebra filtration of $H_{*} (\lo F^{3}\n2\nn;\z/2)$ induced by the augmentation filtration of the coalgebra $H_{*} (\lo F^{3}\n2\nn;\z/2)$, giving a Hopf algebra isomorphism: 
$E^{0}H_{*} (\lo F^{3}\n2\nn;\z/2)\tg T(\n\mathrm{ad}^{j} (v)(u)\;|\;j\geq0\nn)$.}''\\\\
\noindent
We expect this revision to hold as well; however, we do not pursue the verification here, as it would require substantially more technical work.

We make the following remark for the ground ring.

\begin{rem0}\label{jsxs} In  Cohen-Moore-Neisendorfer  \cite[p.~149]{CMN1979}, they consider the ground ring ``$R=\z/p^{s}\z$ with  $1\leq s \leq r$ if $p$ is an odd prime and with $s = 1$ if $p = 2$''.  In their \cite[Corollary~9.4, p.~150]{CMN1979}, the ground ring is taken as such a ring $R$. In  Theorem \ref{dlyde} (Theorem \ref{dlydexj}),
the integral  homology groups of the spaces $C_{f}$, $F$ and $\lo F$ are torsion free if $m-r\geq2$.
Replacing   the ground ring $\z/p$ in  Theorem \ref{dlyde} (Theorem \ref{dlydexj})  by  the above ring $R$ has no influence on the proof and  the theorem remains true.   For the study of $p$-local simply connected CW complexes of finite type, homology with $\mathbb{Z}/p$ coefficients is the primary object of interest.\end{rem0}
Recall that for a graded module $V$, the homomorphism $\cg^{-1}\colon V\rightarrow\cg^{-1} (V)$ is the isomorphism shifting degrees down by $1$. A more general consideration is given as follows.
\begin{thm0}\label{dadxj} Let $X = X' \cup e^{r}$ be a finite CW complex, where $X'\not\simeq *$ is a simply-connected  subcomplex  with $\dim(X')\le r$ and $\mathrm{rank} (H_{r} (X))=\mathrm{rank} (H_{r} (X'))+1$. Let $F_{q}$, $j$ and $q$ be given in the following homotopy fibration, $$F_{q}\stackrel{j}\rightarrow X \xrightarrow[pinch]{q}X/X'=S^{r}.$$
\noindent If $H_{*} (\lo X)\cong T(\cg^{-1} (      \widetilde{H}_{*} (X)  ))$ as Hopf algebras,
    then there exists a short exact sequence of Hopf algebras \begin{equation}\label{zhl}
        H_{*} (\lo F_{q})\xrightarrow[]{(\lo j)_{*}} H_{*} (\lo Y) \xrightarrow[]{(\lo q)_{*}}H_{*} (\lo S^{r}).
    \end{equation}

\noindent In addition,  if we write the desuspension of $\widetilde{H}_{*} (X)$  as $\cg^{-1} (      \widetilde{H}_{*} (X))=\mathrm{Span}_{\z/p}\n u_{1},u_{2},\cdots,u_{s},v\nn$ with $|u_{i}|\leq |v| =r-1$ and $0\neq(\lo q)_{*} (v)\in H_{r-1} (\lo S^{r})$,  then there exists an isomorphism of Hopf algebras \begin{equation} \label{adivuk}
     H_{*} (\lo F_{q})\tg T(\n\mathrm{ad}^{i} (v)(u_{k})\;|\; i\geq0,\; 1\leq k\leq s\nn).
\end{equation}

\end{thm0}
   
\begin{proof} This theorem is still essentially due to Cohen-Moore-Neisendorfer \cite[Proposition~9.3, Corollary~9.4]{CMN1979}. Their methods apply in this more general case.

Note that the setting $0\neq(\lo q)_{*} (v)\in H_{r-1} (\lo S^{r})$ is reasonable, by the method of the proof of Theorem \ref{dlydexj}.
To prove  Sequence (\ref{zhl}) is exact, the methods of the proofs of Theorem \ref{dlydexj} and Corollary \ref{gylpj} remain workable in this more general case.   We only show the isomorphism given in Equation (\ref{adivuk}) in view of the last step of the proof of Theorem \ref{dlydexj}.

Recall that $L(-)$ is the free Lie algebra functor. Let $|u_{k}|=m_{k}$. Consider the  epimorphism $L(\n u_{k}\nn, v)\rightarrow L(v)$ given by $u_{k}\mapsto0,\; v\mapsto v$. Let $L_{0}=L(V')$   be the kernel of this epimorphism.  Then we have a short exact sequence of free Lie algebras \vspace{-0.8\baselineskip}
\[\begin{tikzcd}
L_0 \arrow[r] 
    \arrow[d, equals] 
  & L \arrow[r] 
      \arrow[d, equals] 
  & L'' 
      \arrow[d, equals] \\L(V')  
  & L(\n u_{k}\nn, v)  
  & L(v).
\end{tikzcd}\]
\noindent Applying the universal enveloping algebra functor $U(-)$, we have a short exact sequence of Hopf algebras,
\[\begin{tikzcd}
A' \arrow[r] 
    \arrow[d, equals] 
  & A \arrow[r] 
      \arrow[d, equals] 
  & A'' 
      \arrow[d, equals] \\UL(V')  
  & UL(\n u_{k}\nn, v)  
  & UL(v).
\end{tikzcd}\]
\noindent Note that $H_{*} (\lo F_{q})=UL(V')$. We calculate the  Euler-Poincar\'e series of $V'$ according to \cite[Corollary 3.11, p.~132]{CMN1979}, \begin{align}
\chi(V') &= \notag 1-(\sum_{h=0}^{\wq}\,(t^{r-1})^{h})\cdot(1-(t^{m_{1}}+t^{m_{2}}+\cdots t^{m_{s}})) \notag \\
 &= (\sum_{h=1}^{\wq}\,t^{h(r-1)+m_{1}})+(\sum_{h=1}^{\wq}\,t^{h(r-1)+m_{2}})+\cdots+(\sum_{h=1}^{\wq}\,t^{h(r-1)+m_{s}}) \notag \\
 &=  \sum_{k=1}^{s}\sum_{h=1}^{\wq}\,t^{h(r-1)+m_{k}}. \notag 
\end{align}
\noindent Recall that the Hopf algebra $T(V')=UL(V')$. By \cite[Lemma~3.12, Lemma~3.13, p.~132]{CMN1979}, we see that $$Q(A')=Q(UL(V'))=Q(T(V'))=V'$$ is a free 
$T(v)$-module  with $v$ acting on $V'$ by 
$\mathrm{ad}^{i} (v)(-)$.  Then, $V'$ has $\mathrm{ad}^{i} (v)(u_{k})\; ( i\geq0,\; 1\leq k\leq s)$ as a $\z/p$-basis.\end{proof}

The last step of the proof of Theorem \ref{dadxj} is purely algebraic, resulting in the following corollary.

\begin{cor0} Let  $V=\mathrm{Span}_{\z/p}\n u_{1},u_{2},\cdots,u_{s},v\nn$ with $1\leq |u_{i}|\leq |v|$  be given. Then there exists an isomorphism of Hopf algebras,\[    T(V)\boxed{}_{T(v)}\z/p\tg T(\n\mathrm{ad}^{i} (v)(u_{k})\;|\; i\geq0,\; 1\leq k\leq s\nn).\]
   \noindent Here, the $T(v)$-comodule structure of $T(V)$  is given by the Hopf algebra map $T(V)\rightarrow T(V)\otimes T(v)$, $u_{i}\mapsto u_{i}\otimes1$, $v\mapsto v\otimes1+1\otimes v$.   \hfill $\boxed{}$
\end{cor0}

Theorem \ref{dad} (Theorem \ref{dadxj}) provides an interesting isomorphism.
 Let $X=\s X'$ be a space having a CW decomposition \[X=S^{a_{1}}\cup e^{a_{2}}\cup\cdots\cup e^{a_{m}}\] with $3\leq a_{1}\leq a_{2}\leq \cdots\leq a_{m},$ and $\mathrm{rank} (\widetilde{H}_{*} (X))=m\geq2$. (For example, $X=\s (S^{2}\cup e^{4}\cup e^{4}\cup e^{9})=S^{3}\cup e^{5}\cup e^{5}\cup e^{10}$, the rank\;$m=4$.)  Then \vspace{-1.2\baselineskip}
\begin{eqnarray}
& &\notag H_{*} (\lo\,\mathrm{hofib} (\s X\xrightarrow[ pinch ]{  q_{1}}\s S^{a_{m}}))\\\notag
&\tg&\notag  H_{*} (\lo\,\mathrm{hofib} (\s (S^{a_{1}}\vee S^{a_{2}}\vee \cdots\vee  S^{a_{m}})\xrightarrow[ pinch]{ q_{2}}\s S^{a_{m}}))
\end{eqnarray}
\noindent as  vector spaces. Here, the first vector space is given by Theorem \ref{dad}, and  the second one is implied by the Hilton--Milnor Theorem. Note that by the Hilton--Milnor Theorem, the  space  $\lo\,\mathrm{hofib} (q_{2})$ is  $$\lo\s\bigvee_{i\geq0} (S^{a_{1}}\vee S^{a_{2}}\vee \cdots\vee  S^{a_{m-1}})\wedge S^{i\cdot a_{m}}.$$

\noindent \noindent This implies that the numbers of cells in each dimension in the minimal CW decomposition of the looped homotopy fiber
$\lo\,\mathrm{hofib} (\s X\xrightarrow[ pinch ]{ q_{1}}\s S^{a_{m}})$
are independent of the attaching maps of $X$. (See Hatcher's textbook \cite[Section 4.C, p.~429]{xh} for the terminology concerning minimal CW decompositions. Here, we use its $p$-local analogue.)

\section{Functorial decompositions of looped suspensions--Selick and Wu's $\mathrm{A}^{\min}$-theory}\label{secamin1}

 Although the notation  $n$ has already been fixed, in this section the reader should be aware that the variable $n$ is unrelated to the  variable
$n$ appearing in $C_{f}=S^{n}\cup e^{n+k+1}.$

In studying any structure one usually attempts to decompose the objects into their
simpler components. In homotopy theory, given a topological space $X$, it is natural
to ask whether or not $X$ admits a nontrivial product decomposition $X\simeq X_{1}\times X_{2}$.

In 1995, written under the supervision of Fred Cohen, Wu (the fourth author) presented in his PhD thesis \cite{Wubylw} the product decompositions of looped $p$-local suspensions for any prime $p$.\:    Let $Y$ be a $p$-local path-connected CW complex, $X=\s Y$  and $L_{q} (X)=\mathop{\mathrm{hocolim}}\limits_{\beta_{q}/q}\, X^{ \wedge q}$ where $q$ is a prime different from $p$ and the homotopy colimit will be  explained later (see Equation (\ref{jsclmt})); and let $\n q_{j}\nn_{j=1}^{\wq}$ be the increasing sequence of all primes different from $p$.  Then there exists a space $A$ such that \begin{equation}\label{amqs}
\lo\s X\simeq A\times\prod _{j=1}^{\wq} \lo\s L_{q_{j}} (X).    
\end{equation}
\noindent From this,   Wu deduced  a family of surprising  decompositions of the loop spaces of the$\m2$ Moore spaces $P^{n} (2)=M(\z/2, n-1)$   in \cite{Wubylw}:   2-locally, for each $n\geq3$, there exists a space $Y_{n+1}$ such that\begin{eqnarray}\label{pm2fj}
\lo P^{n+1} (2)&\simeq & \lo P^{3n} (2)\times \lo(\cpt\wedge P^{5n-4} (2)\bigvee    P^{5n-1} (2))\\
& &\notag \times\lo(\cpt\wedge   P^{5n-4} (2)\wedge    P^{n} (2))\\\notag
& &\notag \times \lo \textbf{(} (\cpt)^{\wedge2}\wedge    P^{7n-8} (2) \bigvee (\cpt\wedge  P^{7n-5} (2))^{\wedge2}  \bigvee P^{7n-2} (2)\textbf{)}\\\notag
& &\notag \times (\prod_{\mathrm{prime}\;q\geq11} \lo\s L_{q} (P^{n} (2)))\times Y_{n+1}.\end{eqnarray}

\noindent Such decompositions play a vital role in studying the higher unstable phenomena of the$\m2$ Moore spaces including their far unstable homotopy groups.

Instead of examining 
 decompositions of loop spaces specifically,  in 2000, Selick and Wu \cite{sw2000},  by means of a bridge between homotopy theory and the
 representation theory of symmetric groups, deduced  a kind of functorial decompositions of  looped $p$-torsion suspensions, which is a functorial analogue of Equation (\ref{amqs}) in the case that $\mathrm{id}_{ X}\in[ X, X]$ is of order $p^{r}$. In 2006, this result was generalized by them to  looped suspensions of  $p$-local spaces \cite{sw2006}, in which the functorial analogue of Equation (\ref{amqs})  was completely derived; moreover, the condition ``$X=\s Y$'' is removed.  Roughly speaking,  the theory tells us that a functorial coalgebra decomposition of a tensor coalgebra is realized homotopy theoretically, producing a functorial decomposition of the looped suspensions into a product space up to homotopy. 
 
We introduce some notation and definitions to state the results of \cite{sw2000} and \cite{sw2006}.

Suppose that $p$ is an arbitrary   prime and  $V$ is a $\z/p$-vector space. Notice that for a tensor algebra $T(V)$,
the definition $[a,b]=a\otimes b-(-1)^{|a||b|}b\otimes a $ gives a Lie algebra structure to $T(V)$. Observe that for a  Hopf algebra $T(V)$, there is a canonical identification $$T(V)=UL(V), $$ \noindent
where $UL(V)$ is the universal enveloping algebra of the free Lie algebra $L(V)$.

We assume that we are
working within a Grothendieck  universe.
 Let $\mathbf{CW}_{(p)}^{0}$ be the category of   path-connected $p$-local CW-complexes; let $\hCW$ denote the homotopy category whose objects are the same as those of $\mathbf{CW}_{(p)}^{0}$. 
 
 A vector space $W$ is said to be connected
 if $W_{0}=0$, i.e., its component of elements of degree 0 is trivial.
 Denote the categories of   algebras, coalgebras, Hopf algebras, and connected vector spaces over $\z/p$  by  $\mathbf{Alge}_{\z/p}$, $\mathbf{Coalg}_{\z/p}$, $\mathbf{Hop}_{\z/p}$, and $\Vect$ respectively; (as mentioned earlier, all these objects are  graded.)    

For categories $\mathcal{C}$ and $\mathcal{D}$, denote the functor category  whose objects are all  functors $\mathcal{C}\rightarrow\mathcal{D}$ by $\Fun(\mathcal{C},\mathcal{D}).$

The functor $L_{n}$ defined in the following is of great significance in Selick-Wu's $\mathrm{A}^{\min}$-theory. As we will see in Proposition \ref{amindl}, $\cg^{-1}\widetilde{H}_{*} (\mathrm{SQ}_{n}^{\max} (X))$ is always a vector subspace of $L_{n} (\widetilde{H}_{*} (X)).$

For convenience,  we use the convention that 
$[a,b]$ is a 2-fold commutator and 
$[[a,b],c]$ is a 3-fold commutator;  similar conventions apply to higher fold commutators and iterated Lie brackets, even though  $[[a,b],c]$ is usually called  a 2-fold   bracket. 

\begin{definition0}\label{ydddy}
Let $n\geq2$ be an integer and $V\in\Vect$.   Define $L_{n} (V)$ to be the vector subspace of $V^{\otimes n}\xyd T(V)$ spanned by all $n$-fold  commutators  with  entries in $V$.  Equivalently,  $L_{n} (V)$ is defined to be the vector subspace of the free Lie algebra $L(V)$ spanned by all $n$-fold  Lie brackets  (i.e., Lie monomials of length $n$) with  entries in $V$.
  This yields a functor $\Vect\rightarrow\Vect$, denoted by   $L_{n}$. Note that we therefore have a decomposition  of the free Lie algebra$$L(V)=\bigoplus_{j\geq0}\,L_{j} (V),$$ \noindent after setting $L_{0} (V)=\z/p$ and $L_{1} (V)=V.$
   \end{definition0}

The algebraic functor $\mathrm{A}^{\mathrm{min}}$ of Selick and Wu arises from the question of
the naturality of the classical Poincar\'e-Birkhoff-Witt isomorphism.

Now we can state the following proposition which is given in Selick-Wu \cite[Theorem~1.1, Lemma~2.1]{sw2006} and \cite[p.~5]{sw2000}.

\begin{pro0} \label{aminq} For the functor $\mathrm{A}^{\mathrm{min}}\in \Fun(\Vect,\Coalg),$ the functor $\mathrm{B}^{\mathrm{max}}\in \Fun(\Vect,\Hop)$, together with  functors $$\mathrm{Q}_{n}^{\mathrm{max}}\in\Fun(\Vect,\Vect)\;\, (n\geq2)$$ constructed by Selick-Wu,    the following assertions hold.

\begin{itemize}
    \item[\rm(1)]  In $ \Fun(\Vect,\Coalg)$, $T\tg \mathrm{A}^{\mathrm{min}}\otimes \mathrm{B}^{\mathrm{max}}.$ Namely,  there exists a natural coalgebra decomposition $$T(V)\tg \mathrm{A}^{\mathrm{min}} (V)\otimes \mathrm{B}^{\mathrm{max}} (V).$$ (Here we omit the forgetful functor $\Hop\rightarrow\Coalg$; similar conventions apply below. Recall that by a Hopf algebra $T(V)$ we always mean the primitively generated tensor Hopf algebra.)

 \item[\rm(2)]
 In $\Hop$, $\mathrm{B}^{\mathrm{max}} (V)\tg T(\jia_{n\geq2}\mathrm{Q}_{n}^{\mathrm{max}} (V))$ naturally.

     \item[\rm(3)] $\mathrm{B}^{\mathrm{max}} (V)$ is a natural sub-Hopf algebra of $T(V)$.
      \item[\rm(4)] $L_{n} (V)\xyd \mathrm{B}^{\mathrm{max}} (V) $ naturally if $n$ is not a power of $p$.

      \item[\rm(5)] Each $\mathrm{Q}_{n}^{\mathrm{max}}$  is a  sub-functor of $L_{n}$. And each
$\mathrm{Q}_{n}^{\mathrm{max}}$ is a retract of the tensor product functor $T_{n}\colon  V\mapsto V^{\otimes n}$.

\hfill\boxed{}
    \end{itemize}
 \end{pro0}

Due also to Selick-Wu, the above proposition has a homotopy theoretic analogue. It admits a ``homotopy theoretic realization''  (or a `` geometric realization"). Assertions (1), (2) and (4) of  the following proposition are given in 
Selick-Wu \cite[Theorem~1.2]{sw2006}; for assertion (5), notice that the filtered homotopy colimit commutes with the suspension functor $\s$; for assertion (3), the first part follows  from assertion (2)  and its proof in \cite[p.~451]{sw2006}, and the second part is given in \cite[p.~440]{sw2006}.

In order to indicate the homology suspension isomorphism when we are examining the homology,
we denote the functor $\mathrm{Q}_{n}^{\mathrm{max}}\in\Fun(\hCW,\hCW)$ given in \cite{sw2006} by $\mathrm{SQ}_{n}^{\mathrm{max}}$; in this way,
 the homotopy theoretic functor
$$\mathrm{SQ}_{n}^{\mathrm{max}}\in\Fun(\hCW,\hCW)$$
corresponds better to the algebraic functor
$$\mathrm{Q}_{n}^{\mathrm{max}}\in\Fun(\Vect,\Vect).$$
It should be understood that the notation 
$\mathrm{SQ}_{n}^{\mathrm{max}}$
 is to be treated as an indivisible symbol, and the letter ``S” in this context does not denote the suspension functor ``$\s$''.

 Recall that,  in our setting of localization at $p$, the coefficient
field  and the ground field shall be understood to be $\z/p$ unless otherwise stated.

\begin{pro0}\label{amindl}
       For the functors  $$\mathrm{A}^{\mathrm{min}}, \mathrm{B}^{\mathrm{max}},\mathrm{SQ}^{\mathrm{max}}_{n}\in\Fun(\hCW,\hCW),\;(n\geq2)$$   constructed by Selick-Wu,
 the following properties hold  for $X\in\hCW$. \begin{itemize} \item [\rm(1)] 

In $\Fun(\hCW,\hCW)$, $\lo\s\tg\mathrm{A}^{\mathrm{min}}\times \mathrm{B}^{\mathrm{max}};$ more precisely,  there exists a natural fiber sequence $$\mathrm{A}^{\mathrm{min}} (X)
\xrightarrow[]{j_{X}=*}
\bigvee_{n\geq2}\mathrm{SQ}_{n}^{\mathrm{max}} (X)\stackrel{\pi_{X}}\longrightarrow\s X,$$ resulting in a  natural decomposition $$\lo \s X\simeq \mathrm{A}^{\mathrm{min}} (X)\times\mathrm{B}^{\mathrm{max}} (X),$$ where $\mathrm{B}^{\mathrm{max}}\in \Fun(\hCW,\hCW)$ is defined  to be  $\lo\bigvee_{n\geq2}\mathrm{SQ}_{n}^{\mathrm{max}}$. \footnote{In this paper, we denote the  symbol $\lo Q^{\max} (X)$ in Selick-Wu \cite{sw2006} by $\mathrm{B}^{\max} (X)$. The reader will find this notation more convenient.}

\item [\rm(2)] The homology of  $\mathrm{A}^{\mathrm{min}} (X)$ is given as follows.       The coalgebra $H_{*} (\mathrm{A}^{\mathrm{min}} (X) )$ 
is naturally filtered by the quotient filtration of the its augmentation ideal
filtration,\footnote{Intuitively, the augmentation ideal
filtration is a 
  filtration filtered by the   complexity of the co-multiplications.} giving a natural coalgebra isomorphism  $$\mathrm{Gr}\,H_{*} (\mathrm{A}^{\mathrm{min}} (X) )\tg \mathrm{A}^{\mathrm{min}} (\widetilde{H}_{*} (X) ).$$ \noindent In addition, $\,H_{*} (\mathrm{A}^{\mathrm{min}} (\s X) )\tg \mathrm{A}^{\mathrm{min}} (\widetilde{H}_{*} (\s X) ).$
\item [\rm(3)] The homology of  $\mathrm{B}^{\mathrm{max}} ( X)$ is given as follows. In $\Hop$, $H_{*} (\mathrm{B}^{\mathrm{max}} ( X))$ is naturally filtered   by the restriction of the Hopf algebra filtration of $H_{*} (\lo\s X)$ induced by the augmentation filtration of the coalgebra $H_{*} (\lo\s X)$, giving a natural Hopf algebra isomorphism, $$\mathrm{Gr}\;H_{*} (\mathrm{B}^{\mathrm{max}} ( X))\tg \mathrm{B}^{\mathrm{max}} ( \widetilde{H}_{*} (X)).$$
\noindent In $\Hop$,   $\;H_{*} (\mathrm{B}^{\mathrm{max}} (\s X))\tg \mathrm{B}^{\mathrm{max}} ( \widetilde{H}_{*} (\s X)) \;\text{naturally}.$ The homology of  $\mathrm{SQ}_{n}^{\mathrm{max}} (X)$  is given as follows.   \[\widetilde{H}_{*} (\mathrm{SQ}_{n}^{\mathrm{max}} (X))\tg\cg  \mathrm{Q}_{n}^{\mathrm{max}} (\widetilde{H}_{*} (X)   
 ) \;\text{naturally}.\]
\noindent
Moreover, up to the above isomorphism,  $\widetilde{H}_{*} (\mathrm{SQ}_{n}^{\mathrm{max}} (X))\xyd \cg L_{n} (\widetilde{H}_{*} (X)).$
 
(Recall that $\cg(-)$ denotes the  isomorphism of graded modules shifting degrees up by $1$, corresponding to the homology suspension isomorphism $\cg\colon\widetilde{H}_{\bullet} (-)\rightarrow\widetilde{H}_{\bullet+1} (\s-)$.)

\item [\rm(4)] $\mathrm{SQ}_{n}^{\mathrm{max}} (X)$ is a natural retract of $\s X^{\wedge n}$ for each $n\geq2$.  And $\mathrm{SQ}_{n}^{\mathrm{max}} (X)$ is simply-connected for each $n\geq2$.

\item [\rm(5)] $\mathrm{SQ}^{\mathrm{max}}_{n}$ commutes with the suspension, that is, $\mathrm{SQ}^{\mathrm{max}}_{n} (\s X)\simeq \s \mathrm{SQ}^{\mathrm{max}}_{n} ( X)$ naturally.
\hfill\boxed{}
  
\end{itemize}
\end{pro0} 

For the meaning of the superscript ``$\mathrm{min}$'' in the notation $\mathrm{A}^{\mathrm{min}}$,
we make the following remark.

\begin{rem0} There is also an ungraded analogue of $\mathrm{A}^{\mathrm{min}}$ constructed as follows. Let $T_{ung} (-)$ be the ungraded primitively generated tensor Hopf algebra functor, sending an ungraded $\z/p$-vector space $V$ to the ungraded   primitively generated tensor Hopf algebra  generated by $V$; $\mathrm{A}_{ung}^{\min} (V)$ is defined to be the   $\mathbf{minimal}$ functorial coalgebra
retract  of  the coalgebra $T_{ung} (V)$ subject to the condition that  $\mathrm{A}_{ung}^{\min} (V)\dyd V$. Here, $T_{ung} (V)$ forgets its algebra structure. This yields a functor $$\mathrm{A}_{ung}^{\min}\colon  \n ungraded\;\, \z/p\text{-}vector\; spaces\nn\rightarrow\n ungraded \;\,\z/p\text{-}coalgebras  \nn.$$
Then the functor $\mathrm{A}_{ung}^{\min}$ extends canonically to its graded analogue 
$\mathrm{A}^{\min}$. However, for a graded connected $\z/p$-vector space $V$, $\mathrm{A}^{\min} (V)$ may not be the minimal  functorial coalgebra
retract    of  the coalgebra $T(V)$ subject to the condition that $\mathrm{A}^{\min} (V)\dyd V$. See  \cite[p.~5, pp.~80--82]{sw2000} for details.
\end{rem0}

Let us continue to recall some fundamental ideas in Selick-Wu \cite{sw2000,sw2006}; also see Wu \cite[Section~3]{wu2003}. Let  $X\in\hCW$  where $p$ is a prime. For a $p$-local suspension, denote its self-map of degree $a\in\z_{(p)}$ by $[a]$; so the homomorphism $\widetilde{H}_{*} ([a])=a(-)$, the homomorphism acting as multiplication by $a$. Let $\mathcal{S}_{n}$ denote the $n$-th symmetric group, where permutations are composed from right to left. (For example, $(12)(13)=(132)$ but $(12)(13)\neq (123)$.)  For each $\tau\in\mathcal{S}_{n}$, we have a map $      \underline{\tau}\colon   \emph{X}^{\wedge n}\rightarrow\emph{X}^{\wedge n}$ by permuting smash factors, $x_{1}\wedge x_{2}\wedge \cdots \wedge x_{n}\stackrel{\underline{\tau}}\longmapsto x_{\tau(1)}\wedge x_{\tau(2)}\wedge \cdots\wedge  x_{\tau(n)}$. We adopt the convention that $\s X^{\wedge n}=S^{1}\wedge X^{\wedge n}$ (but not $X^{\wedge n}\wedge S^{1}$). Employing the    group structure of $[\Sigma X^{\wedge n},\Sigma X^{\wedge n}$], we  further obtain  maps $||\,\ell\, ||\colon   \Sigma\emph{X}^{\wedge n}\rightarrow\Sigma\emph{X}^{\wedge n}$ for all $\ell$  in the group ring $\mathbb{Z}_{(p)}[\mathcal{S}_{n}]$. More precisely, let $\{\tau_i\}_{i=1}^{n!}$ denote all elements of $\mathcal{S}_{n}$; notice the fact that these $\tau_{i}$ form a basis of  $\mathbb{Z}_{(p)}[\mathcal{S}_{n}]$; setting
$\ell=k_{1}\tau_{1}+k_{2}\tau_{2}+\cdots+k_{n!}\tau_{n!}\in\z_{(p)}[\mathcal{S}_{n}]$ in  which $k_{\bullet}\in\z_{(p)}$, take $||\,\ell\, ||\in[\Sigma X^{\wedge n},\Sigma X^{\wedge n}]$ as

\begin{equation}\label{el}
    ||\,\ell\, ||=(\s \underline{\tau_{1}}\,)\hc [k_{1}]+(\s \underline{\tau_{2}}\,)\hc [k_{2}]+\cdots+(\s \underline{\tau_{n!}}\,)\hc [k_{n!}].
\end{equation}

\noindent This  yields a group representation:    $$\mathbb{Z}_{(p)}[\mathcal{S}_{n}]\rightarrow \mathrm{Hom}_{\Vect} (\widetilde{H}_{*} (\s X^{\wedge n} ),\widetilde{H}_{*} (\s X^{\wedge n} ) ),\quad\ell\mapsto ||\,\ell\, ||_{*}.$$That is to say, $ \widetilde{H}_{*} (\s X^{\wedge n} )$ turns out to be a left module over $\mathbb{Z}_{(p)}[\mathcal{S}_{n}]$. The map   $||\,\ell\, ||\in[\s X^{\wedge n},\s X^{\wedge n}] $ and the element $\ell\in \z_{(p)}[\mathcal{S}_{n}]$ are both denoted by $\ell$ if no confusion arises.

Now we recall the  Dynkin-Specht-Wever element from the representation  theory and show some properties of it. Roughly speaking, restricting to primitives, the Dynkin-Specht-Wever operator $[[\cdots[1, 2], \cdots],  n]$   determined by the  Dynkin-Specht-Wever element  $\beta_{n}$ realizes the Whitehead product $[[\cdots[\mathrm{id}, \mathrm{id}], \cdots],  \mathrm{id}]\in[\s X^{\wedge n},\s X]$ and the  Samelson  product 
$\x\x\cdots\x E, E\xx, \cdots\xx,  E\xx\in[ X^{\wedge n},\lo\s X]$ in homology. We make no claim that any of the
results in Proposition \ref{btmpro} are original. These
results are  used in \cite[p.~198]{t2003}, \cite[p.~3248]{CW2013} and  \cite[p.~738]{BW}. The ideas of assertion (2) and (3) are due to Fred Cohen \cite[p.~1197]{Hand} in 1990s or earlier. The integral version of (4) is Samelson's classical result in  1950s. Since the detailed proofs of the following proposition, except for part (1)(i), do not appear in the published paper, we provide them here.

Recall that $\mathcal{S}_{n}$ is the $n$-th symmetric group  whose product whose permutations are composed from   right to left.  Denote by $\mathcal{S}_{n}^{op}$ the $n$-th symmetric group whose permutations are composed from left to right. \begin{pro0}\label{btmpro}   Let $n\geq2$ and    $D_{n}=(12\cdots n)\in \mathcal{S}_{n},$ the cycle of length $n$. Hence $D_{n}\in \mathcal{S}_{n}^{op}.$ The following assertions hold.

\begin{itemize}
    \item [\rm (1)]  \begin{itemize}
        \item[\rm (i)] 
     Define the Dynkin-Specht-Wever elements $\beta_{n}\in\z [\mathcal{S}_{n}]\xyd \z_{(p)} [\mathcal{S}_{n}]\xyd \mathbb{Q}[\mathcal{S}_{n}]$ as follows: $$\beta_{n}=(1-D_{n})(1-D_{n-1})\cdots (1-D_{2});$$
\noindent
    that is, $$\beta_{2}=1-D_{2},\;\beta_{n}=(1-D_{n})\beta_{n-1}=\beta_{n-1}-D_{n}\beta_{n-1}.$$
 \noindent Then, $\beta_{n}\beta_{n}=n\beta_{n}.$  Let  $V$  be an ungraded $\mathbb{Q}$-vector space  and let $ \mathcal{S}_{n}$    act on $V^{\otimes n}$ from the left by permuting factors. Then the  action of $\beta_{n}$ on $V^{\otimes n}$  is given by $$\beta_{n} (a_{1}a_{2}\cdots a_{n})=[[\cdots[a_{1}, a_{2}], \cdots],  a_{n}], \;\;(a_{i}\in V),$$
the ungraded $n$-fold left-normed Lie bracket.
\item[\rm (ii)] Whether  regarded as elements of  $\mathbb{Q}[\mathcal{S}_{n}]$  or $\mathbb{Q}[\mathcal{S}_{n}^{op}]$, the following elements are  idempotent,
$$\frac{1}{n} (1-D_{n})(1-D_{n-1})\cdots (1-D_{2}),\qquad\frac{1}{n} (1-D_{n}^{-1})(1-D_{n-1}^{-1})\cdots (1-D_{2}^{-1}),$$
$$\frac{1}{n} (1-D_{2})(1-D_{3})\cdots (1-D_{n}),\qquad\frac{1}{n} (1-D_{2}^{-1})(1-D_{3}^{-1})\cdots (1-D_{n}^{-1}).$$\noindent Their 
$n$-multiples lying in $\z[\mathcal{S}_{n}]$ or $\z[\mathcal{S}_{n}^{op}]$ are   called  Dynkin-Specht-Wever elements. (This is why some references on $\mathrm{A}^{\min}$-theory define their ``\,$\beta_n$'' to be $(1-D_n^{-1})(1-D_{n-1}^{-1})\cdots(1-D_2^{-1})\in \mathbb{Z}[\mathcal{S}_{n}]$.)
\end{itemize} 
\item [\rm (2)] Let $X\in\hCW.$ Take the maps $|| \beta_{n} ||' \in [\s X^{\wedge n}, \s X^{\wedge n}]$  as follows: $$ ||\beta_{2}||'=\mathrm{id}_{\s X}+\s \underline{D_{2}}\hc [-1],\;||\beta_{n}||'= ||\beta_{n-1}||'\wedge \mathrm{id}_{X}+ \big(( \s \,\underline{D_{n}}\, )  \hc (||\beta_{n-1}||'\wedge \mathrm{id}_{X})\big)\hc[-1].  $$
Then, $|| \beta_{n} ||'_{*}=|| \beta_{n} ||_{*}\in\mathrm{Hom}_{\Vect} (\widetilde{H}_{*} (\s X^{\wedge n} ),\widetilde{H}_{*} (\s X^{\wedge n} ) ).$ 
 
    \item [\rm (3)] Write $V=\widetilde{H}_{*} ( X )$ and so 
    $\cg (V^{\otimes n})=\widetilde{H}_{*} (\s X^{\wedge n} )$. Suppose $a_{i}\in V$. Then the action of $\beta_{n}$ on homology  is given by\\\centerline{$|| \beta_{n} ||_{*}\colon \cg (V^{\otimes n})\longrightarrow \cg (V^{\otimes n})$,\qquad\quad\qquad\quad\qquad\quad\qquad\quad}$$\cg (a_{1}a_{2}\cdots a_{n})\mapsto\cg[[\cdots[a_{1}, a_{2}], \cdots],  a_{n}],$$
\noindent $(\text{the}\;\cg\text{-image of the}\; n\text{-fold left-normed Lie bracket} ).$

\item [\rm (4)] Use the notation of (3).  Let $\e=\mathrm{id}_{X}$  and $W_{n}=[[\cdots[\e, \e], \cdots],  \e]\in[\s X^{\wedge n},\s X]$, the left-normed Whitehead product. Further suppose that all elements in $V=\widetilde{H}_{*} (X)$ are primitive. 
Then, at the homology  level, $(\lo W_{n})_{*}\colon H_{*} (\lo\s X^{\wedge n})\rightarrow H_{*} (\lo\s X)$  is given by \\\\\centerline{\;\;\qquad$ (\lo W_{n})_{*}\colon T (V^{\otimes n})\longrightarrow T (V)$,\qquad\qquad\qquad\qquad\qquad\quad\qquad\quad\qquad\qquad}$$ \;a_{1}a_{2}\cdots a_{n}\mapsto[[\cdots[a_{1}, a_{2}], \cdots],  a_{n}],\;(a_{i}\in V).$$

Accordingly, we write  $(\lo W_{n})_{*} (a_{1}a_{2}\cdots a_{n})=\beta_{n} (a_{1}a_{2}\cdots a_{n})$.
\end{itemize}
\end{pro0}

\begin{proof}\begin{itemize}
 \item [\rm (1)] These results are well-known in representation theory. They, or equivalent formulations of them, were proved independently by Dynkin \cite{Dynkin} 
in 1947, Specht \cite{Specht}  in 1948 and Wever \cite{Wever}  in 1949. None of their original papers is available in English. For readers in algebraic topology, we briefly explain why these results hold.

\indent \quad An English reference containing a proof  that $\frac{1}{n} (1-D_{2}^{-1})(1-D_{3}^{-1})\cdots (1-D_{n}^{-1})\in \mathbb{Q}[S_{n}^{op}]$ is idempotent is given in Reutenauer's book \cite[Theorem 8.16, p.~195]{btn}.  Moreover, this theorem  shows that the linear transformation  $\mathscr{A}$, $$(\mathbb{Q}^{\oplus n})^{\otimes n}\stackrel{\mathscr{A}}\longrightarrow(\mathbb{Q}^{\oplus n})^{\otimes n},\;\;x_{1}x_{2}\cdots x_{n}\mapsto \frac{1}{n}[[\cdots[x_{1}, x_{2}], \cdots],  x_{n}], (x_{i}\in\mathbb{Q}^{\oplus n})$$
\noindent is idempotent. We now explain why the remaining elements listed in (ii) are idempotent.
Notice that there exists a ring  isomorphism   $\mathbb{Q}[\mathcal{S}_{n}]\xrightarrow[]{-1}\mathbb{Q}[\mathcal{S}_{n}^{op}]$ determined by $\tau\in \mathcal{S}_{n}\mapsto \tau^{-1}\in \mathcal{S}^{op}_{n}$, and a ring anti-automorphism   $\mathbb{Q}[\mathcal{S}_{n}]\xrightarrow[]{-1}\mathbb{Q}[\mathcal{S}_{n}]$ determined by $\tau\in \mathcal{S}_{n}\mapsto \tau^{-1}\in \mathcal{S}_{n}$. Then it is easy to see  that 
\begin{align}
&\qquad\;\frac{1}{n} (1-D_{n})(1-D_{n-1})\cdots (1-D_{2})\in\mathbb{Q}[\mathcal{S}_{n}^{op}]\text{\;is idempotent} \notag \\
 &\Longleftrightarrow \frac{1}{n} (1-D_{n}^{-1})(1-D_{n-1}^{-1})\cdots (1-D_{2}^{-1})\in\mathbb{Q}[\mathcal{S}_{n}^{}]\text{\;is idempotent} \notag \\
 &\Longleftrightarrow \frac{1}{n} (1-D_{2})(1-D_{3})\cdots (1-D_{n})\in\mathbb{Q}[\mathcal{S}_{n}]\text{\;is idempotent} \notag \\
 &\Longleftrightarrow\frac{1}{n} (1-D_{2}^{-1})(1-D_{3}^{-1})\cdots (1-D_{n}^{-1})\in\mathbb{Q}[\mathcal{S}_{n}^{op}]\text{\;is idempotent}.\notag 
\end{align}
\noindent Thus, all four elements above are idempotent. It also follows easily that 
\begin{align}
&\qquad\;\frac{1}{n} (1-D_{n})(1-D_{n-1})\cdots (1-D_{2})\in\mathbb{Q}[\mathcal{S}_{n}]\text{\;is idempotent} \notag \\
 &\Longleftrightarrow \frac{1}{n} (1-D_{n}^{-1})(1-D_{n-1}^{-1})\cdots (1-D_{2}^{-1})\in\mathbb{Q}[\mathcal{S}_{n}^{op}]\text{\;is idempotent} \notag \\
 &\Longleftrightarrow \frac{1}{n} (1-D_{2})(1-D_{3})\cdots (1-D_{n})\in\mathbb{Q}[\mathcal{S}_{n}^{op}]\text{\;is idempotent} \notag \\
 &\Longleftrightarrow\frac{1}{n} (1-D_{2}^{-1})(1-D_{3}^{-1})\cdots (1-D_{n}^{-1})\in\mathbb{Q}[\mathcal{S}_{n}]\text{\;is idempotent}.\notag 
\end{align}
\noindent

 Note that the  representation $R$ arising from the left  permutation action of $\mathcal{S}_{n}$ on the tensor factors
\[
\mathbb{Q}[\mathcal{S}_n]\stackrel{R}\longrightarrow \mathrm{Hom}\bigl((\mathbb{Q}^{\oplus n})^{\otimes n},(\mathbb{Q}^{\oplus n})^{\otimes n}\bigr)
\] \noindent is  faithful. Moreover, by induction on $n$, one obtains that  $\frac{1}{n}\beta_{n}=\frac{1}{n} (1-D_{n})(1-D_{n-1})\cdots (1-D_{2})\in\mathbb{Q}[\mathcal{S}_{n}]$ also gives \text{the linear transformation } $$ (\mathbb{Q}^{\oplus n})^{\otimes n}\stackrel{\mathscr{A}}\longrightarrow(\mathbb{Q}^{\oplus n})^{\otimes n},\quad x_{1}x_{2}\cdots x_{n}\mapsto \frac{1}{n}[[\cdots[x_{1}, x_{2}], \cdots],  x_{n}],\;(x_{i}\in\mathbb{Q}^{\oplus n})$$ 
\noindent  under the  representation $R$. 
(To see this, it suffices to treat  $x_{1}x_{2}\cdots x_{n-1}\in (\mathbb{Q}^{\jia n})^{\otimes (n-1)}$
 as a single block $X_{n-1}$. Then $D_{n}=(12\cdots n)$   swaps the block $X_{n-1}$  and the tensor factor $x_{n}\in(\mathbb{Q}^{\jia n})^{\otimes 1}$. That is,  $D_{n} (X_{n-1}x_{n})=x_{n}X_{n-1}$.) Since the linear transformation $\mathscr{A}$ is idempotent and the representation $R$ is faithful, we infer that $\frac{1}{n}\beta_{n}$ is idempotent, and hence the other four elements listed above  are idempotent as well. That is,   Reutenauer's theorem \cite[Theorem 8.16, p.~195]{btn} essentially shows that  all the elements  listed in (ii) are idempotent. (The element $\beta_{n}$ is usually denoted by $\w_{n}$  in representation theory. We  follow the convention of Fred Cohen. Note that $\w_{n}$ stands for the Whitehead product $[\mathrm{id}_{S^n},\mathrm{id}_{S^n}]$ in homotopy theory.) 
   \item [\rm (2)] It follows from  (1) and the definition  of the notation $|| \beta_{n}||$, see Equation (\ref{el}).
   \item [\rm (3)]
  For convenience, we omit the homology suspension isomorphism from the notation $\cg$ in this proof  if no confusion arises. Note that $\underline{D_{2}}=\underline{(12)}$, the switching map.   At the homology level, $\underline{D_{2}}_{*} (a_{1}a_{2})=(-1)^{|a_{1}||a_{2}|}a_{2}a_{1}.$ Hence, \[
\beta_2(a_{1}a_{2}) =a_{1} a_{2} - (-1)^{|a_{1}||a_{2}|} a_{2}a_{1}=[a_{1},a_{2}].\]

Assume that $$\beta_{n-1} (a_{1}a_{2}\cdots a_{n-1})=[[\cdots[a_{1}, a_{2}], \cdots],  a_{n-1}].$$ We show that $$\beta_{n} (a_{1}a_{2}\cdots a_{n})=[[\cdots[a_{1}, a_{2}], \cdots],  a_{n-1}] a_{n}-(-1)^{r}a_{n}[[\cdots[a_{1}, a_{2}], \cdots],  a_{n-1}],$$
where $r=|[[\cdots[a_{1}, a_{2}], \cdots],  a_{n-1}]|\cdot |a_{n}|$; that is, we  need to show that $$\beta_{n} (a_{1}a_{2}\cdots a_{n})=[[\cdots[a_{1}, a_{2}], \cdots],  a_{n}].$$ By the assumption and the relation  $||\beta_{n}||'= ||\beta_{n-1}||'\wedge \mathrm{id}_{X}+ \big(( \s \,\underline{D_{n}}\, )  \hc (||\beta_{n-1}||'\wedge \mathrm{id}_{X})\big)\hc[-1]$ given in (2), we have $$\beta_{n} (a_{1}a_{2}\cdots a_{n})=[[\cdots[a_{1}, a_{2}], \cdots],  a_{n-1}] a_{n}-\underline{D_{n}}_{*}\big([[\cdots[a_{1}, a_{2}], \cdots],  a_{n-1}] a_{n}\big).$$

Thus we need only to show that \begin{equation}\label{gnjs}
    \underline{D_{n}}_{*}\big([[\cdots[a_{1}, a_{2}], \cdots],  a_{n-1}] \otimes a_{n}\big)=(-1)^{r}a_{n}\otimes[[\cdots[a_{1}, a_{2}], \cdots],  a_{n-1}].
\end{equation}  Recall that $D_{n}=(12\cdots n)$. Therefore $$\underline{D_{n}} (x_{1}\wedge x_{2}\wedge\cdots \wedge x_{n})=x_{n}\wedge x_{1}\wedge x_{2}\wedge\cdots \wedge x_{n-1},\;(x_{i}\in X).$$ Then Equation (\ref{gnjs})  follows from the commutative diagram, \begin{equation}\label{gy123}
    \begin{tikzcd}
X^{\wedge n}\arrow[r,"\underline{D_{n}}"] \arrow[d, equal] & X^{\wedge n} \arrow[d, equal] \\
X^{\wedge (n-1)}\wedge X \arrow[r, "\tau"]                           & X\wedge X^{\wedge (n-1)}.\end{tikzcd}
\end{equation}
\noindent
Here, $\tau$ denotes the switching map $(x_{1}\wedge x_{2}\wedge\cdots \wedge x_{n-1})\wedge x_{n} \mapsto x_{n}\wedge (x_{1}\wedge x_{2}\wedge\cdots \wedge x_{n-1})$.

\item[\rm (4)] Essentially, we have proved this assertion in Lemma \ref{samelson}. Note that here we write $a\otimes b$ as $ab$ for short. For $n=2$, the result follows from Lemma \ref{samelson} (2).
For $n\geq3$, apply Lemma \ref{samelson} (3) through taking the map $f_{1}=\x \cdots \x E, E\xx,\cdots , E\xx$ ($(n-1)$-fold) and the map $f_{2}=E$. It is not hard to obtain the result   by induction  on $n$. 
\end{itemize}
\end{proof}

An interesting application is as follows.
The relation $\beta_{3}\beta_{3}=3\beta_{3}$ implies the classical result for the Whitehead product, $$3[[\e_{n},\e_{n}],\e_{n}]=0\in\pi_{3n-2} (S^{n}),\;\;(\e_{n}=\mathrm{id}_{S^{n}}).$$
Let $\underline{\beta_{3}}=((1-\underline{(123)})\hc(1-\underline{(12)})\in\pi_{3n} (S^{3n}).$  The formula
$\underline{(123)}=\underline{(13)(12)}=\underline{(13)}\hc\underline{(12)}\in [(S^{n})^{\wedge3},(S^{n})^{\wedge3}]=\pi_{3n} (S^{3n})$ implies that $$\mathrm{deg}\,\underline{(123)}=(-1)^{n^{2}+n^{2}}=1$$ and so the map $\underline{\beta_{3}}=0.$ Combining with the isomorphism $(\pi_{3n-3} (\lo S^{n}),+)\tg(\pi_{3n-3} (\lo S^{n}),\;\cdot\;)$, Fred Cohen \cite[Lemma 10.1, p.~1197]{Hand} gave the relation  $$\x \x E,E\xx,E\xx\hc\underline{\beta_{3}}=3\x \x E,E\xx,E\xx\in\pi_{3n-3} (\lo S^{n}),$$\noindent leading to $3[[\e_{n},\e_{n}],\e_{n}]=0$.

In  view of (2) of the above proposition, following \cite{t2003} and \cite{BW}, the maps   $|| \beta_{n} ||',|| \beta_{n} ||\in[\s X^{\wedge n},\s X^{\wedge n}] $, the homomorphism $|| \beta_{n}  ||_{*}$ and the element $\beta_{n}\in \z_{(p)}[\mathcal{S}_{n}]$ are all denoted by $\beta_{n}$ if no confusion arises.

 Notice that in $ \z_{(p)}[\mathcal{S}_{n}] $, $ (\frac{1}{n}\beta_{n})(\frac{1}{n}\beta_{n})=\frac{1}{n}\beta_{n}$ if $n\not\ty0\m p$. Recalling a fundamental idea in representation theory,  an idempotent element in the group ring is classically used to decompose a module. In fact, by the construction of $\mathrm{SQ}_{n}^{\mathrm{max}} (X)$, if $n\not\ty0\m p$,  the space $\mathrm{SQ}_{n}^{\mathrm{max}} (X)$ can be taken as $\mathop{\mathrm{hocolim}}\limits_{\beta_{n}/n}\,\s X^{ \wedge n}$. Here the homotopy colimit is the filtered  homotopy colimit of the infinitely repeated sequence  \vspace{-0.3\baselineskip}\begin{equation}\label{jsclmt}\s X^{ \wedge n}\stackrel{\beta_{n}/n\;}\longrightarrow\s X^{ \wedge n}\stackrel{\beta_{n}/n\;}\longrightarrow\cdots.\end{equation} 
\noindent
(See the fourth through sixth lines from the bottom of page 439, as well as line 5 of page 440 in Selick-Wu \cite{sw2006}. In this case, $\af_{n}^{max}$ can be taken as $1/n$. Note that $\mathrm{A}^{\mathrm{min}}$ and $\mathrm{SQ}_{\bullet}^{\mathrm{max}}$ are defined up to natural equivalences. In the above case, we can define $\mathrm{SQ}_{n}^{\mathrm{max}}$ in this special way.) Observe that the definition  $\mathrm{SQ}_{n}^{\mathrm{max}} ( X)=\mathop{\mathrm{hocolim}}\limits_{||\beta_{n}||/n} (\s X^{\wedge n})$
and the definition  
$\mathrm{SQ}_{n}^{\mathrm{max}} ( X)=\mathop{\mathrm{hocolim}}\limits_{||\beta_{n}||'/n} (\s X^{\wedge n})$
are equivalent ($n\not\ty0\m p$), by examining the composition $$\mathop{\mathrm{hocolim}}\limits_{||\beta_{n}||/n}\,\s X^{ \wedge n} \hookrightarrow\s X^{\wedge n} \twoheadrightarrow \mathop{\mathrm{hocolim}}\limits_{||\beta_{n}||'/n}\,\s X^{ \wedge n}$$\noindent and checking the homology.

 We point out the following fundamental facts in homological algebra, which follow immediately from the definition of the colimit: let $A$ be a $\z/p$-vector space and $e\colon A\rightarrow A$ be an idempotent homomorphism; then the colimit of the infinitely repeated sequence satisfying\begin{equation}\label{img}
    \mathrm{colim} (A\xrightarrow{e} A\xrightarrow{e}  A \rightarrow\cdots)=\mathrm{Im} (e\colon A\rightarrow A).
\end{equation}
\noindent Moreover, denoting by $\rho_{2}\colon A_{2}=A\rightarrow \mathrm{colim} (A\xrightarrow{e} A\xrightarrow{e}  A \rightarrow\cdots)$ the canonical homomorphism from the second $A$  in the sequence to the colimit, we have \begin{equation}\label{ps2}
    (\rho_{2}\hc e)(x)=e(x),\;\forall\,x\in A_{2}=A.
\end{equation}

We make the following  remark for Selick-Wu \cite[p.~441] {sw2006}  and  \cite[Lemma 3.11, p.~448]{sw2006}, which discuss the construction of the map $\bigvee_{n\geq2} \mathrm{SQ}_{n}^{\max} (X)\rightarrow \s X$.

\begin{rem0} \label{wddrmk}
\begin{itemize}
    \item [\rm (1)] Suppose that $X\in\hCW$.
    Let $\theta_n \colon \mathrm{SQ}^{\max}_{n} (X) \to \Sigma X^{\wedge n}$ be the composition,\begin{equation}\label{thetady}
        \mathrm{SQ}^{\max}_{n} (X)\hookrightarrow \mathrm{SQ}^{\max}_{n} (X)\vee \mathop{\mathrm{hocolim}}\limits_{1-\beta_{n}\hc \af^{\max}_{n}} (\s X^{\wedge n})\xrightarrow[]{\simeq} \s X^{\wedge n}.
    \end{equation} 
    \noindent  Recall that $W_{n}$  is the iterated Whitehead product $[[\cdots[\mathrm{id}, \mathrm{id}], \cdots],  \mathrm{id}]\in[\s X^{\wedge n},\s X]$.
    Let
\[
\phi \colon \bigvee_{n=2}^{\infty} \mathrm{SQ}^{\max}_{n} (X) \to \Sigma X
\]
be the  map such that $\phi|_{\mathrm{SQ}^{\max}_{n} ( X)}$ is the composition
\[
\mathrm{SQ}^{\max}_{n} (X) \xrightarrow{\theta_n} \Sigma X^{\wedge n} \xrightarrow{\alpha_n^{\max}} \Sigma X^{\wedge n} \xrightarrow{W_n} \Sigma X.
\]
\noindent Then, $\kj$ is the map $\bigvee_{n=2}^{\infty} \mathrm{SQ}^{\max}_{n} (X) \to \Sigma X$ in the fiber sequence of Proposition \ref{amindl} (1).\footnote{The above is just given in Selick-Wu \cite[p.~441]{sw2006}. In the proof of  \cite[Lemma 2.2, p.~440]{sw2006}, there is a typo; namely, the symbol $\lambda_{n}^{\max}$ should be $\af_{n}^{\max}.$ There, the map $g_{X}$ is just $||1-\beta_{n}\af_{n}^{\max}||$, following  our notation. Recall that, in this paper, we denote their  $\lo Q^{\max} (X)$ by $\mathrm{B}^{\max} (X)$.}  
    \item [\rm (2)] Use the notation of (1). Additionally assume that all elements in $\widetilde{H}_{*} (X)$ are primitive, $n\not\ty0\m p$, and the space $\mathrm{SQ}^{\max}_{n} (X)\simeq\s^{r+1}Y$ for some $r\geq0$ and $Y\in\hCW$. As usual, write $V=\widetilde{H}_{*} (X).$ Then the looped restriction $\ck=\lo (\phi|_{\mathrm{SQ}^{\max}_{n} (X)})$ includes $H_{*} (\lo \mathrm{SQ}^{\max}_{n} (X))$ in $H_{*} (\lo\s X)$ by the commutative diagram \begin{equation}
   \begin{tikzcd}\label{jsys}
H_{*} (\lo \mathrm{SQ}^{\max}_{n} (X)) \arrow[r,"\ck_{*}\;"] \arrow[d,equal] 
& H_{*} (\lo\s X) \arrow[d, equal] \\
T(L_{n}  (V)) \arrow[r,"\xyd"] 
& T(V).
\end{tikzcd} 
\end{equation}
\noindent 
Recall that 
 $L_{n} (V)\xyd L(V)\cap V^{\otimes n}\xyd T(V)$ is spanned by all  
$n$-fold  Lie brackets  with entries in $V$, giving a functor $L_{n}\colon \Vect\rightarrow\Vect$; and hence  \begin{equation}\label{tdgs}
 H_{*} (\mathrm{SQ}^{\max}_{n} ( X))=\mathrm{Im} (||\beta_{n}/n||_{*})=\mathrm{Im} (||\beta_{n}||_{*})=\cg L_{n} (V).
\end{equation} \noindent
The commutativity of the above  diagram follows from the following commutative diagram, in which $\rho_{2}$ denotes the canonical map from   the second $\s X^{\wedge n}$ in Sequence (\ref{jsclmt}) to the homotopy colimit, and $\mathrm{Tran}$  denotes the transgression in the homology Leray-Serre spectral sequence with respect to the path fibration,
\[
\begin{tikzcd}[column sep=1.5cm, row sep=1cm]
H_j(\Omega \Sigma X^{\wedge n}) 
\arrow[r, "(\Omega \rho_2)_*"] 
& H_j(\Omega \mathrm{SQ}^{\max}_n(X)) 
\arrow[r, "(\Omega \theta_n)_{*}"] 
& H_j(\Omega \Sigma X^{\wedge n}) 
\arrow[r, "\left(\lo\frac{W_n}{n}\right)_*"] 
& H_j(\Omega \Sigma X). \\
H_{j+1} (\Sigma X^{\wedge n}) 
\arrow[u,  hook ,"\mathrm{Tran}"] 
\arrow[r, "\rho_{2*}"] 
& H_{j+1} (\mathrm{SQ}^{\max}_n(X)) 
\arrow[u,  hook ,"\mathrm{Tran}"] 
\arrow[r, "\theta_{n*}"] 
& H_{j+1} (\Sigma X^{\wedge n}) 
\arrow[u,  hook ,"\mathrm{Tran}"] & 
\end{tikzcd}
\]

Note that $\theta_{n*} (a)=a$ ($\forall \,a$) by the definition and $\rho_{2*} (b)=||\beta_{n}/n||_{*} (b)$ ($\forall \,b$) by the fact given in Equation  (\ref{ps2}); moreover, for any elements $a_{i}\in V= \widetilde{H}_{*} (X)$, $$(\lo\frac{W_n}{n})_*([[\cdots[a_{1}, a_{2}], \cdots],  a_{n}])=(\lo\frac{W_n}{n})_*(\beta_{n} (a_{1}a_{2}\cdots a_{n}))=\frac{\beta_{n}}{n} (\beta_{n} (a_{1}a_{2}\cdots a_{n}))=[[\cdots[a_{1}, a_{2}], \cdots],  a_{n}],$$
\noindent
by Proposition \ref{btmpro} (4). Here, the graded actions of $\frac{\beta_{n}}{n}$ and $\beta_{n}$ are still  denoted by $\frac{\beta_{n}}{n}$ and $\beta_{n}$, respectively.
\end{itemize}
\end{rem0}

 The functors 
$\mathrm{SQ}_{n}^{\mathrm{max}}$, $\mathrm{Q}_{n}^{\mathrm{max}}$ and $\mathrm{A}^{\mathrm{min}}$
 are constructed abstractly and  not easily accessible in general. In some specific situations, explicit descriptions have been obtained in \cite{sw2000}. It is natural to ask how these functors act on spheres. The following lemma provides the answer.

\begin{lem0}\label{bmx}\begin{itemize}
     \item[\rm (1)] Localize spaces at $p=2$. Then, $\mathrm{A}^{\mathrm{min}} (S^{n})\simeq\lo S^{n+1}$, $\mathrm{B}^{\mathrm{max}} (S^{n})\simeq  *$ for all $n\geq1.$ 
    \item[\rm (2)] Localize spaces at an odd prime $p$. Then for all $i\ge 1$,
\[
\mathrm{A}^{\mathrm{min}} (S^{2i})\simeq \Omega S^{2i+1}, \quad
\mathrm{B}^{\mathrm{max}} (S^{2i})\simeq *,
\]
\[
\mathrm{A}^{\mathrm{min}} (S^{2i-1})\simeq S^{2i-1} \quad \text{and}\quad
\mathrm{B}^{\mathrm{max}} (S^{2i-1})\simeq \Omega S^{4i-1}.
\]

The second row gives  Serre's odd primary splitting, $\lo S^{2i}\simeq S^{2i-1}\times \lo S^{4i-1}$.
\end{itemize}
    
\end{lem0}
\begin{proof} Notice that  $H_{*} (\lo\s S^{n})\tg T(x)$ as Hopf algebras where $|x|=n$. Let  $V=\widetilde{H}_{*} (S^{n})=\mathrm{span}_{\z/p}\n x\nn.$
\begin{itemize}
\item[\rm(1)]   Recall the properties given by Proposition~\ref{aminq} (2) and (5). In this case, $L_{n} (V)=0$ for all $n\geq2$ and hence $$0=L_{n} (V)\dyd \mathrm{Q}_{n}^{\mathrm{max}} (V),$$ leading to $\mathrm{Q}_{n}^{\mathrm{max}} (V)=0$ for all $n\geq2$. Then $\mathrm{SQ}_{n}^{\mathrm{max}} (S^{i})\simeq*$ for all $n\geq2.$
The remaining part follows from Proposition~\ref{amindl} (1).
 \item[\rm(2)] For the even case, the proof is similar to that part (1).   We consider  the odd case, setting $n=2i-1$.
Clearly, in $T(x)$, $$[x,x]=2x^{2},\;[[x,x],x]=[2x^{2},x]=0.$$

\noindent Thus, $L_{2} (V)=\mathrm{Span}_{\z/p}\n 2 x^{2}\nn,\; (|2 x^{2}|=2n=4i-2)$ and $L_{j} (V)=0$, ($j\geq3$). By Equation (\ref{tdgs}), we have $$\widetilde{H}_{*} (\mathrm{SQ}_{2}^{\max} (S^{2i-1}))=\cg L_{2} (V)=\mathrm{Span}_{\z/p}\n 2\cg x^{2}\nn,\;\;|2\cg x^{2}|=4i-1,$$$$\widetilde{H}_{*} (\mathrm{SQ}_{j}^{\max} (S^{2i-1}))\xyd \cg L_{j} (V)=0,\;\;j\geq3.$$
\noindent
Then, $\mathrm{SQ}_{2}^{\max} (S^{2i-1})\simeq S^{4i-1}$, $\mathrm{SQ}_{j}^{\max} (S^{2i-1})\simeq *$ for all $j\geq3.$ So, $\mathrm{B}^{\max} (S^{2i-1})=\lo\bigvee_{n\geq2}\mathrm{SQ}_{n}^{\max} (S^{2i-1})\simeq\lo S^{4i-1}.$ It follows that $$\lo\s S^{2i-1}\simeq \mathrm{A}^{\min} (S^{2i-1})\times\lo S^{4i-1} \,\;\text{for all}\;\,i\geq1.$$

\noindent Applying $H_{*} (-)$, we have $H_{2i-1} (\mathrm{A}^{\min} (S^{2i-1}))\tg\z/p$ and  $\widetilde{H}_{\ell} (\mathrm{A}^{\min} (S^{2i-1}))=0$
 for all $\ell\neq 2i-1$.
\noindent Applying $\pi_{*} (-)$, we have $\pi_{2i-1} (\mathrm{A}^{\min} (S^{2i-1}))\tg\z_{(p)}$ and $\pi_{\ell} (\mathrm{A}^{\min} (S^{2i-1}))=0$ for all $\ell<2i-1$. Hence,  we obtain $$\mathrm{A}^{\min} (S^{2i-1})\simeq S^{2i-1}\,\;\text{for all}\;\,i\geq2.$$ In the case $i=1$,  $\mathrm{A}^{\min} (S^{2i-1})=\mathrm{A}^{\min} (S^{1})$ is not simply-connected. Nevertheless, we  have the decomposition  $\lo S^{2}\simeq\mathrm{A}^{\min} (S^{1})\times\lo S^{3}$. By employing the Hopf fibration $S^{1}\rightarrow S^{3}\rightarrow S^{2}$ and the fact $\pi_{\bullet\geq2} (S^{1})=0$, we have $\pi_{j} (\lo S^{3})\tg\pi_{j} (\lo S^{2})$ for all $j\geq2$. Since homotopy groups of spheres are finitely generated, we infer $\pi_{j} (\mathrm{A}^{\min} (S^{1}))=0$ for all $j\geq2$. In summary, we have $\pi_{1} (\mathrm{A}^{\min} (S^{1}))\tg\z_{(p)}$ and $\pi_{j} (\mathrm{A}^{\min} (S^{1}))=0$ for all $j\neq1$. Hence, $\mathrm{A}^{\min} (S^{1})\simeq S^{1}.$

\end{itemize}
    
\end{proof}

For the odd case of part (2) of the above lemma, the result is also given in the proof of  \cite[Proposition~3.2]{hnsf}. Our methods provide an improvement over theirs.

\section{Applications of Selick and Wu's $\mathrm{A}^{\min}$-theory to 2-cell complexes}\label{secamin2}

Recall that $C_{f}=S^{n}\cup_{f} e^{n+k+1}$ ($n\geq2,\;k\geq1$) and $F$ is the homotopy fiber of the pinch map $\s C_{f}\xrightarrow[pinch]{q}S^{n+k+2}$.

Applying the  functorial decomposition to the pinch map $\s C_{f}\xrightarrow[pinch]{q}S^{n+k+2}$, we deduce decompositions relative to $\lo\bigvee_{m\geq2}\mathrm{SQ}_{m}^{\max} (C_{f})$.  (Strictly speaking, here we should use $\s C_{f}\xrightarrow[]{-q}\;S^{n+k+2}$, because the homeomorphism $C_{\s f}\cong\s C_{f}$ changes the sign of the suspended pinch map. However, this sign has no effect on the homotopy type of the homotopy fiber.)

Essentially, the following lemma has  been proved by  Wu \cite[Proposition 4.16, p.~50]{wu2003}.

\begin{lem0}\label{fjdlyw}  Localize spaces at  a prime $p$ allowed to be 3; if $p\neq2$, further assume that $n+k$ is odd. 
Then, there exists a space $\widetilde{A}$ such that  \[\lo F\simeq \widetilde{A}\times \lo\bigvee_{m\geq2}\mathrm{SQ}_{m}^{\max} (C_{f}).\] \noindent In addition, there exists a commutative diagram whose rows are splitting fiber sequences. And the splitting of the second row  is given by Proposition~\ref{amindl} (1), \begin{equation}\label{jhtb}
        \begin{tikzcd}
\lo\bigvee_{m\geq2}\mathrm{SQ}_{m}^{\max} (C_{f}) \arrow[r] \arrow[d, equal] & \lo F \arrow[r] \arrow[d,"\lo i"] & \widetilde{A} \arrow[d] \\
\lo\bigvee_{m\geq2}\mathrm{SQ}_{m}^{\max} (C_{f}) \arrow[r,"\lo\pi_{C_{f}}"]                  & \lo\s C_{f} \arrow[r]           & \mathrm{A}^{\mathrm{min}} (C_{f}).
\end{tikzcd}
  \end{equation}
\end{lem0}
\begin{proof}
Recall that 
$\mathrm{B}^{\mathrm{max}} (X)=\lo\bigvee_{m\geq2} \mathrm{SQ}^{\mathrm{max}}_{m} (X)$ and that $\mathrm{SQ}^{\mathrm{max}}_{m} (S^{i})$ is simply-connected for each $m\geq2$ and $i\geq1$. By Lemma~\ref{bmx}, we know that there exist homotopy  equivalences $\mathrm{B}^{\mathrm{max}} (S^{i})\simeq * \;(\forall i\geq1$) localized at 2; moreover, $\mathrm{B}^{\mathrm{max}} (S^{2i})\simeq*$,  $\mathrm{B}^{\mathrm{max}} (S^{2i-1})\simeq \lo S^{4i-1}$ $(\forall i\geq1$) localized at $p\geq3$.  
Let $i\colon F\rightarrow \s C_{f}$ be the homotopy fiber inclusion.
Apply 
Proposition~\ref{amindl} (1), we have 
   a commutative diagram with rows  fiber sequences  
    
 \[
\begin{tikzcd}
\mathrm{A}^{\min} (C_f) 
\arrow[r, "j_{C_{f}}=*\;"] 
\arrow[d] 
& 
\displaystyle \bigvee_{m\ge 2} \mathrm{SQ}_m^{\max} (C_f) 
\arrow[r,"\pi_{C_f}"] 
\arrow[d] 
& 
\Sigma C_f 
\arrow[d,"q"] 
\\
\mathrm{A}^{\min} (S^{n+k+2}) 
\arrow[r] 
& 
\displaystyle \bigvee_{m\ge 2} \mathrm{SQ}_m^{\max} (S^{n+k+2}) 
\simeq* \arrow[r] 
& 
S^{n+k+2}.
\end{tikzcd}
\]
    
  \noindent Hence,  $\pi_{C_{f}}$  lifts to $F$, the homotopy fiber of $q$. That is, $\pi_{C_{f}}=i\hc\xi$ for some map $\xi \colon  \bigvee_{m\ge 2} \mathrm{SQ}_m^{\max} (S^{n+k+2})\rightarrow F$. So we deduce a commutative diagram with rows and columns fiber sequences   for some space $\widetilde{A}$,\[
\begin{tikzcd}
\widetilde{A} \arrow[r, equal] \arrow[d] & \widetilde{A} \arrow[r] \arrow[d,"\Tilde{j}"] &*  \arrow[d] \\
\mathrm{A}^{\mathrm{min}} (C_{f}) \arrow[r, "j_{C_{f}}=*\;"] \arrow[d]                 & \dq \arrow[r, "\pi_{C_{f}}"]\arrow[d,"\xi"]           & \s C_{f}\arrow[d,equal]\\
\lo S^{n+k+2} \arrow[r]                  & F\arrow[r,"i"]           & \s C_{f}.
\end{tikzcd}
\] 
\noindent
Note that  $j_{C_{f}}=*$ leads to $\Tilde{j}=*$. Thus, we derive a homotopy equivalence $\lo F\simeq \widetilde{A}\times \lo\dq$.
 The relation  $i\hc\xi=\pi_{C_{f}}$  is also expressed by the diagram \vspace{-1\baselineskip}\;
\[
\begin{tikzcd}
\dq\arrow[r,"\xi"] \arrow[d, equal] & F \arrow[d,"i" ] \\
\dq \arrow[r, "\pi_{C_{f}}"]                           & \s C_{f}.\end{tikzcd}
\] By use of the naturality  of  homotopy fibrations, we obtain the commutativity of Diagram~(\ref{jhtb}).\end{proof}
 
Localize spaces at a prime $p$. Recall that the nontrivial CW decomposition
 $C_{f}=S^{n}\cup e^{n+k+1}$ implies that  we may set $V=\widetilde{H}_{*} (C_{f})=\mathrm{Span}_{\z/p}\n x,y\nn$,
where $|x|=n,\;  |y|=n+k+1$.
Hereafter, we additionally assume that $H^{*} (C_{f})$ is a trivial algebra, ensuring that  $$H_{*} (\lo\s C_{f})\tg T(x,y)\;\;(|x|=n,\;  |y|=n+k+1)\;\;\; \text{as Hopf algebras}.$$ We use the identification $H_{*} (\lo\s C_{f})=T(x,y).$

\begin{lem0}\label{spn} Suppose that the prime $p\neq3$. Then there exists an isomorphism
    $$\widetilde{H}_{*} (\mathrm{SQ}_{3}^{\mathrm{max}} (C_{f}))\tg\mathrm{Span}_{\z/p}\n \cg[[x,y],y],\;\cg[[x,y],x]\nn.$$
\end{lem0}

\begin{proof}  Recall that $\mathrm{SQ}_{3}^{\mathrm{max}} (X)=\mathop{\mathrm{hocolim}}\limits_{\beta_{3}/3}\,\s X^{ \wedge 3}$. Now, take $X=C_{f}$. According to Proposition \ref{btmpro} (3) and fact (\ref{img}), we have 
$$\widetilde{H}_{*} (\mathop{\mathrm{hocolim}}\limits_{\beta_{3}/3}\;\s X^{\wedge 3})\tg \mathop{\mathrm{colim}}\limits_{||\beta_{3}/3||_{*}}\;\widetilde{H}_{*} (\s X^{\wedge 3})=\mathop{\mathrm{colim}}\limits_{||\beta_{3}/3||_{*}}\;\cg (V^{\otimes 3})=\mathrm{Im}||\beta_{3}/3||_{*}=\mathrm{Span}_{\z/p}\n \cg[[x,y],y],\;\cg[[x,y],x]\nn.$$
\end{proof}

The following lemma is motivated by \cite[Proposition 3.1]{CW2013}.
\begin{lem0}\label{q3max}Suppose that the prime $p\neq3$. Then  $\mathrm{SQ}_{3}^{\mathrm{max}} (C_{f})\simeq\s^{2n+k+2}C_{f}.$\end{lem0}\begin{proof}  Write $X=C_{f}=S^{n}\cup e^{n+k+1}$ and so $\widetilde{H}_{*} (X)=\mathrm{Span}_{\z/p}\n x,y\nn$,
where $|x|=n,\;  |y|=n+k+1$.  Then,
according to Proposition \ref{tl}, 
there is a map $g: S^{|x|}\wedge S^{|y|}\rightarrow X\wedge X$ such that $$g_{*} (\e_{|x|+|y|})=xy+tyx\in H_{|x|+|y|} (X\wedge X)$$ for some  integer $t\in [1,p-1].$ Recall that $\mathrm{SQ}_{3}^{\mathrm{max}} (X)=\mathrm{hocolim(}\s X^{ \wedge 3}\stackrel{\beta_{3}/3\;}\longrightarrow\s X^{ \wedge 3}\stackrel{\beta_{3}/3\;}\longrightarrow\cdots).$ Let $\kj_{2}$ be the canonical map from the second $\s X^{ \wedge 3}$ in the sequence to $\mathrm{SQ}_{3}^{\mathrm{max}} (X)$. 
Consider the composition $\Psi$, $$ \Psi\colon(\emph{S}^{1}\wedge X)\wedge (S^{|x|}\wedge S^{|y|})\xrightarrow[]{\mathrm{id}\wedge g} (S^{1}\wedge X)\wedge (X\wedge X)\xrightarrow[]{\beta_{3}/3} S^{1}\wedge X\wedge X\wedge X\xrightarrow[]{\kj_{2}}\mathrm{SQ}_{3}^{\mathrm{max}} (X). $$
Applying $\widetilde{H}_{*} (-),$ we get 
$$ \Psi_{*}\colon\widetilde{H}_{*} (S^{1}\wedge X)\otimes \widetilde{H}_{*}  (S^{|x|}\wedge S^{|y|})\xrightarrow[]{\mathrm{id}\otimes g_{*}} \widetilde{H}_{*} (S^{1}\wedge X)\otimes \widetilde{H}_{*} (X\wedge X)\xrightarrow[]{\beta_{3}/3} \widetilde{H}_{*} (S^{1}\wedge X\wedge X\wedge X)\xrightarrow[]{\kj_{2*}}\widetilde{H}_{*} (\mathrm{SQ}_{3}^{\mathrm{max}} (X)). $$

\noindent Suppose $a\in \widetilde{H}_{*} (X)$. We examine the value of $(\cg a)\e_{|x|+|y|}$ under $\Psi_{*}$. By  
Proposition \ref{btmpro} (3) and fact (\ref{ps2}), we infer that
$$\Psi_{*}\colon (\cg a)\e_{|x|+|y|}\stackrel{\mathrm{id}\otimes g_{*}}\longmapsto(\cg a) (xy+tyx)\stackrel{\beta_{3}/3}\longmapsto\frac{1}{3}\cg([[a,x],y]+t[[a,y],x])\stackrel{\kj_{2*}}\longmapsto\frac{1}{3}\cg([[a,x],y]+t[[a,y],x]). $$
\noindent Hence, $$\Psi_{*} ((\cg x)\e_{|x|+|y|})=\frac{t}{3}\cg([[x,y],x]),\quad\Psi_{*} ((\cg y)\e_{|x|+|y|})=\frac{t}{3}\cg([[y,x],y])=\pm \frac{t}{3}\cg([[x,y],y]).$$
\noindent (We use the symbols $\frac{t}{3}$ and $\frac{1}{3}$ by employing the obvious $\z_{(p)}$-module structure of the $\z/p$-vector space.) So, $$\mathrm{Im} (\Psi_{*})=\widetilde{H}_{*} (\mathrm{SQ}_{3}^{\mathrm{max}} (X)).$$  \noindent Noting that the domain and codomain of $\Psi_{*}$ are vector spaces of dimension 2, we deduce that $\Psi_{*}$ is an isomorphism. 
By the $p$-local Whitehead theorem, we know that $$\Psi\colon (S^{1}\wedge X)\wedge (S^{|x|}\wedge S^{|y|})\rightarrow \mathrm{SQ}_{3}^{\mathrm{max}} (X)$$ \noindent
is a homotopy equivalence. 
\end{proof}

\begin{lem0}\label{spn1}  Suppose that the prime $p\neq3.$ Then
    $$\widetilde{H}_{*} (\s^{2n+k+1}C_{f})\tg\mathrm{Span}_{\z/p}\n [[x,y],y],\;[[x,y],x]\nn\xyd T(x,y)=H_{*} (\lo\s C_{f}).$$ In addition, this isomorphism is induced by the composition$$\s^{2n+k+1}C_{f}\xrightarrow[]{E}\lo\s^{2n+k+2}C_{f}\stackrel{\simeq\;}\rightarrow\lo\mathrm{SQ}_{3}^{\mathrm{max}} (C_{f})\hookrightarrow \mathrm{A}^{\min} (C_{f})\times \mathrm{B}^{\max} (C_{f})\stackrel{\simeq\;}\rightarrow\lo\s C_{f}.$$ 
\end{lem0}

\begin{proof} Lemma \ref{spn} and Lemma \ref{q3max} imply   the first part of the  lemma. The second part is in fact a standard result of  Selick-Wu's $\mathrm{A}^{\min}$-theory and is often used without explicit mention, see  Remark~\ref{wddrmk} (2), in particular Diagram (\ref{jsys}) for a comprehensive explanation.\end{proof}
\;\\\indent It is easy to see that Lemma \ref{fjdlyw} admits the following generalization. Although the following proposition is not used in this paper, we include it here for the convenience of readers who may find it useful for further study.

In the following proposition, the symbol $F$ is used temporarily with a different meaning from that in our homotopy fibration
$
F\to \Sigma C_{f}\to S^{n+k+2}$; thus the notation is overloaded.
\begin{pro0} Work $p$-locally where $p$ is an arbitrary prime. Let $X$ be a path-connected CW complex with a CW decomposition  $X = X' \cup e^{r}$ where $X'$ is a subcomplex with $\dim(X')\leq r$. If $p$ is odd, we additionally assume that $r$ is even. Let $F$ be the homotopy fiber of the suspended pinch map $\s X\rightarrow \s\big( (X' \cup e^{r})/X'\big)=S^{r+1}$, resulting in a homotopy fibration $F\xrightarrow[]{i}\s X\xrightarrow[]{}S^{r+1}$. Then, there exists a space $\widetilde{A}$ such that $$\lo F\simeq \widetilde{A}\times \mathrm{B}^{\max} (X).$$ \noindent In addition, there exists a commutative diagram whose rows are splitting fiber sequences. And the splitting of the second row  is given by Proposition~\ref{amindl} (1), \begin{equation}\notag
        \begin{tikzcd}
\mathrm{B}^{\max} (X) \arrow[r] \arrow[d, equal] & \lo F \arrow[r] \arrow[d,"\lo i"] & \widetilde{A} \arrow[d] \\
\mathrm{B}^{\max} (X)\arrow[r]                  & \lo\s X \arrow[r]           & \mathrm{A}^{\mathrm{min}} (X).
\end{tikzcd}
  \end{equation}
\noindent
If $X$ is a  simply-connected CW complex of finite type and $H^{*}(X)$ is a trivial algebra , then the Hopf algebra $H_{*}(\lo F)$ is given by Theorem \ref{dadxj} (Theorem \ref{dad}).\hfill\boxed{}
\end{pro0}

As a side remark, the theory of Selick-Wu has been generalized. In 2006, in collaboration with Theriault,  they generalized the result to    looped $p$-local coassociative co-H spaces \cite{stw}. In 2013,  Grbi\'{c},  Theriault and Wu \cite{amin2013} generalized the result to    looped $p$-local co-H spaces.  It should   be noted that in 2006, Gray \cite{Gray2006} also obtained   decompositions similar to those in \cite{stw}; and in some  cases his  decompositions have better properties. Nevertheless, Selick-Wu's 
$\mathrm{A}^{\min}$-theory enjoys somewhat more satisfactory properties from both the algebraic and the functorial points of view.

\section{Proof of the main theorem}\label{secproofbeta}
In this section,  spaces are assumed to be localized at a prime $p$ such that either
    \begin{itemize}
        \item[$\bullet$]  $p=2$,
        \item[$\bullet$] or  $p\geq5$ and  $n+k$  is additionally assumed to be odd. 
       
    \end{itemize}

 Recall that we have assumed that $H^{*} (C_{f})$ is a trivial algebra.

\subsection{A table of  notation}\label{hbg1}  The proof of the main theorem in  Section \ref{secproofbeta} requires the use of a substantial amount of notation. For the convenience of the reader, we summarize in the following table the notation in Section~\ref{secproofbeta}, including some symbols whose definitions will be given later.
\begin{longtable}{ll}
\hline
Symbol & Description \\[0.3em]
\hline
\endfirsthead

\hline
Symbol & Description \\[0.3em]
\hline
\endhead

\hline
\endfoot

\hline
\endlastfoot

$f$ & $f\in\pi_{n+k} (S^{n})$, ($n\geq2$)  \\  
$C_{f}$ & the homotopy cofiber, $C_{f}=S^{n} \cup _{f}e^{n+k+1}$\\
$\text{Hopf algebra}\;T(\bar{V})$ & the tensor Hopf  algebra $T(\bar{V})$ obtained by letting elements  in   $\bar{V}$ be primitive \\
$F$ & the homotopy fiber of the pinch map $\s C_{f}\rightarrow S^{n+k+2}$ 
\\ 
 
$\fe$ & the 2-cell skeleton of $F$; $\fe=S^{n+1}\cup _{[\e_{n+1},\s f]}\,e^{2n+k+2}$ \\ 
$\fs$ & the 3-cell skeleton of $F$; $\fs=\fe\cup _{\beta}\,e^{3n+2k+3}$ \\ 
$G$ & the homotopy fiber of the pinch map $\fe\rightarrow S^{2n+k+2}$; \\ 

 & $G=(S^{n+1}\vee S^{3n+k+2})\cup e^{5n+2k+3}\cup\cdots$ \\ 
 
$A$ & the homotopy fiber of $S^{3n+k+1}\hookrightarrow \s^{2n+k+1}C_{f}$; with bottom cell $S^{3n+2k+1}$ \\ 

$B$ & the homotopy fiber of
$\fe\hookrightarrow\fs$; with bottom cell $S^{3n+2k+2}$ 
 \\ 
$i_{1}$ & 
$i_{1}\colon  S^{3n+k+1}\rightarrow \s^{2n+k+1}C_{f}$ denotes the inclusion
 \\ 
$i_{2}$ & 
$i_{2}\colon   \s^{2n+k+1}C_{f}\rightarrow\lo\fs$ denotes the restriction of $\s^{2n+k+1}C_{f}\hookrightarrow\lo F$ \\  
$i_{3}$ & 
$i_{3}\colon  S^{3n+2k+2}=\sk_{3n+k+2} (B)\hookrightarrow B$ denotes the  inclusion  \\
 
$i_{4}$ & 
$i_{4}\colon  S^{3n+2k+1}=\sk_{3n+k+1} (\lo B)\hookrightarrow\lo B$ denotes the inclusion  \\

$i_{5}$ & 
in the homotopy fibration $B\stackrel{i_{5}}\longrightarrow \fe\stackrel{\;j_{2}}\hookrightarrow \fs$   \\

$j_{2}$ & $j_{2}\colon  \fe\hookrightarrow \fs$ denotes the inclusion \\ 
$j_{3}$ & in the homotopy fibration $G\stackrel{j_{3}}\longrightarrow \fe\xrightarrow[q_{2}]{pinch}S^{2n+k+2}$\\

$j_{2.5}$ & $j_{2.5}\colon  S^{n+1}\hookrightarrow \fe$ denotes the inclusion 

\\ 
$j_{3.5}$ & $S^{n+1} \overset{\,j_{3.5}\,}{\xyd} G=(S^{n+1}\vee S^{3n+k+2})\cup e^{5n+2k+3}\cup\cdots$\\ 
$j_{4}$ & $S^{3n+k+2} \overset{\,j_{4}\,}{\xyd} G=(S^{n+1}\vee S^{3n+k+2})\cup e^{5n+2k+3}\cup\cdots$ \\ 
$q_{2}$ & the pinch map $\fe=S^{n+1}\cup e^{2n+k+2}\xrightarrow{}S^{2n+k+2}$  \\

$\mathrm{Im} (q_{2*})$ & only standing for $\mathrm{Im} (\pi_{3n+k+2} (\fe)\xrightarrow{q_{2*}}\pi_{3n+k+2} (S^{2n+k+2}))$  \\
$\mathrm{d}$ & $\mathrm{d}=\mathrm{rank} (\z/p\otimes \mathrm{Im} (q_{2*}))$, the number of the direct summands of  $\mathrm{Im} (q_{2*})$ \\
$\n\s\mathbbm{y}^{(i)}\nn_{i=1}^{\mathrm{d}}$ & an arbitrary minimal generating set of   $\mathrm{Im} (q_{2*})$, viewed as the empty set if $\mathrm{d}=0$\\
$[\s\mathbbm{y}^{(i)}]$ & the lift of
 $-\s\mathbbm{y}^{(i)}$, viewed as  $0$ if $\mathrm{d}=0$\\
$[\s\mathbbm{y}^{(i)}]'$ & the new lift of
 $-\s\mathbbm{y}^{(i)}$ when $k=0$ and $n$ is odd, viewed as  $0$ if $\mathrm{d}=0$\\
 
$\kj$ & $\phi\colon   S^{3n+k+1}\rightarrow \lo\fe$ is  defined by Diagram (\ref{phiddy}) and satisfies $  (\lo j_{2})\hc\phi= i_{2}\hc i_{1}$ \\ 
$\rho$ &  $\rho\colon   A\rightarrow \lo B$ is a map defined by   Diagram (\ref{yrd})\\ 
$\xi,\ck,\lt$ & these three maps are defined by  Diagram (\ref{mldt})\\ 
$\mathrm{adj} (-)$  
 &   the classical isomorphism $[\s-,-]\stackrel{\tg}\longrightarrow[-,\lo-]$
\end{longtable}

\subsection{The homology of looped $F$ and its skeletons}

Recall that we have the nontrivial CW decomposition
 $C_{f}=S^{n}\cup e^{n+k+1}$. We  have an identification of the  Hopf algebras $H_{*} (\lo\s C_{f})=T(x,y),\;(|x|=n,\;   |y|=n+k+1).$ 
Then Theorem~\ref{dlydexj} yields  the following lemma.
\begin{lem}\label{hltd}  
    $H_{*} (\lo F)\tg T(\n\mathrm{ad}^{m} (y)(x)\;|\; m\geq0\nn)$ as Hopf algebras. \hfill\boxed{}
\end{lem}

Proposition~\ref{grdl} (2) implies that 
$H_{*} (F)=\widetilde{H}_{*} (S^{n+1})\otimes H_{*} (\lo \s S^{n+k+1}),$ a graded vector space generated by the elements $ab^{m}\;(m\geq0)$ with $|a|=n+1 $ and $  |b|=n+k+1$. One may be interested in the relation between the generators of $H_{*} (\lo F)$ and $H_{*} (F)$. The following lemma answers this question.
\begin{lem} \label{pxlmb} Let $m\geq0.$ In the $\z/p$-homology Leray-Serre spectral sequence associated with the homotopy fibration $\lo F\rightarrow*\rightarrow F$,  we have $\tau(ab^{m})=\mathrm{ad}^{m} (y)(x)\; $ up to a unit in $\z/p$, where $\tau$ denotes the transgression.  That is, we have $ab^{m}=\cg'(\mathrm{ad}^{m} (y)(x))$ up to a unit in $\z/p$,  where $\cg'$ denotes the homology suspension.\end{lem}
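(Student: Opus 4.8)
The plan is to establish the equivalent assertion $\cg'(\mathrm{ad}^{m}(y)(x))=\mathbbm{a}\mathbbm{b}^{m}$ up to an invertible coefficient in $\z/p$, by induction on $m$; under the convention of Subsection~\ref{cvss} this is exactly the transgression formula $\tau(\mathbbm{a}\mathbbm{b}^{m})=\mathrm{ad}^{m}(y)(x)$. First I would record the bookkeeping. By Lem.~\ref{hltd}, $H_{*}(\lo F)\tg T(W)$ as primitively generated Hopf algebras, where $W=\mathrm{span}_{\z/p}\{\mathrm{ad}^{m}(y)(x)\mid m\geq0\}$ and $|\mathrm{ad}^{m}(y)(x)|=n+m(n+k+1)$; by Prop.~\ref{grdl}(2), $\widetilde H_{*}(F)=\mathrm{span}_{\z/p}\{\mathbbm{a}\mathbbm{b}^{m}\mid m\geq0\}$ with $|\mathbbm{a}\mathbbm{b}^{m}|=(n+1)+m(n+k+1)$. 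As $n+k+1>0$, the degrees $(n+1)+m(n+k+1)$ are pairwise distinct, so each group $\widetilde H_{(n+1)+m(n+k+1)}(F)$ is one-dimensional, and $|\mathrm{ad}^{m}(y)(x)|+1=|\mathbbm{a}\mathbbm{b}^{m}|$. Since $\cg'$ annihilates decomposables, $\cg'(\mathrm{ad}^{m}(y)(x))$ is automatically a scalar multiple of $\mathbbm{a}\mathbbm{b}^{m}$; the whole content is that this scalar is a unit.

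For the base case $m=0$ I would use sphericity together with naturality of the transgression. Here $\sk_{n+1}(F)=S^{n+1}$ and $\mathbbm{a}$ is the image of the fundamental class under the bottom-cell inclusion $\ell\colon S^{n+1}=\sk_{n+1}(F)\hookrightarrow F$. Comparing the path-loop fibration $\lo S^{n+1}\to*\to S^{n+1}$ with $\lo F\to*\to F$ along $\ell$, naturality gives $\tau(\mathbbm{a})=(\lo\ell)_{*}\tau(\text{fundamental class})$, and since the fundamental class of $S^{n+1}$ transgresses to the generator of $\widetilde H_{n}(\lo S^{n+1})$, which $(\lo\ell)_{*}$ carries to the unique degree-$n$ generator $x=\mathrm{ad}^{0}(y)(x)$ of $H_{*}(\lo F)$ (nonzero because the further composite into $\lo\s C_{f}$ detects the spherical class $x\in T(x,y)$), the case $m=0$ follows.

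For the inductive step I would run the $\z/p$-homology Serre spectral sequence of $\lo F\to*\to F$. In this (homology) spectral sequence the fibre column $E^{r}_{0,*}$ only receives differentials and the base row $E^{r}_{*,0}$ only supports them, whereas $E^{\infty}$ is $\z/p$ concentrated in bidegree $(0,0)$ because the total space is contractible; thus every class of positive total degree must die. Assume inductively that $\tau(\mathbbm{a}\mathbbm{b}^{j})=\mathrm{ad}^{j}(y)(x)$ up to a unit for all $j<m$. Extending these transgressions multiplicatively, the resulting differentials cancel, in the total degree of $w_{m}:=\mathrm{ad}^{m}(y)(x)$, every decomposable of $T(W)$ appearing in the fibre column and every product appearing in the base row, so that on the page $E^{q+1}$ (with $q=|w_{m}|$) the only survivors in the two relevant bidegrees are $E^{q+1}_{0,q}=\langle w_{m}\rangle$ and $E^{q+1}_{q+1,0}=\langle\mathbbm{a}\mathbbm{b}^{m}\rangle$. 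Because both of these must still die by $E^{\infty}$ and no other differential enters or leaves these bidegrees, the transgression $d^{q+1}\colon\langle\mathbbm{a}\mathbbm{b}^{m}\rangle\to\langle w_{m}\rangle$ is forced to be an isomorphism, which is exactly $\tau(\mathbbm{a}\mathbbm{b}^{m})=\mathrm{ad}^{m}(y)(x)$ up to a unit.

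I expect the main obstacle to be the bookkeeping hidden in the inductive step: one must verify that the lower transgressions, propagated by the multiplicative structure (and by the comodule structure over $H_{*}(\lo S^{n+k+2})$ arising from the fibration $F\to\s C_{f}\to S^{n+k+2}$), cancel precisely the decomposables and leave $w_{m}$ and $\mathbbm{a}\mathbbm{b}^{m}$ as the sole survivors, with no premature differential killing either one. This is the standard Borel--Kudo transgression phenomenon for a space whose looped homology is a free tensor algebra. As alternative routes that bypass part of this bookkeeping, one may instead invoke the general principle that $H_{*}(\lo F)\tg T(W)$ forces the transgression to carry $\widetilde H_{*+1}(F)$ isomorphically onto the generating space $W$ (so that $\widetilde H_{*}(F)$ is, degreewise, the upward shift of $W$ by one, which the degree count above already confirms), or identify the operation $[-,y]$ on $H_{*}(\lo F)$ with the holonomy action of $\lo S^{n+k+2}$, under which $\cg'$ intertwines $[-,y]$ with multiplication by $\mathbbm{b}$ and the formula reduces to the base case.
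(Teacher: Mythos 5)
Your proposal is correct and follows essentially the same route as the paper: the paper likewise runs the $\z/p$-homology Serre spectral sequence of $\lo F\rightarrow *\rightarrow F$, propagates the known lower transgressions through the fibre column, and dismisses the remaining bookkeeping (which you rightly identify as the real content — checking that no premature differential touches $\mathbbm{a}\mathbbm{b}^{m}$ or $\mathrm{ad}^{m}(y)(x)$) as a ``straightforward calculation.'' The one refinement you should make is that the propagation $d(\mathbbm{a}\mathbbm{b}^{j}\cdot v)=\pm\,\mathrm{ad}^{j}(y)(x)\cdot v$ is justified not by ``multiplicativity'' (the homology Serre spectral sequence is not an algebra spectral sequence) but by Moore's theorem that this spectral sequence is a left $H_{*}(\lo F)$-module spectral sequence with $d(ab)=\pm(da)b$, which is exactly the result the paper cites.
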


\begin{proof}
   Consider the $\z/p$-homology Leray-Serre spectral sequence $\n E^{\bullet}_{\bullet,\bullet},d_{\bullet}\nn$ associated with the homotopy fibration  $\lo F\rightarrow*\rightarrow F$ and apply Proposition \ref{mdpxl}. Observe that $$E^{2}_{0,*}=H_{*} (\lo F)= T(\n\mathrm{ad}^{m} (y)(x)\;|\; m\geq0\nn),\;\;(|x|=n,\; |y|=n+k+1),$$
    $$E^{2}_{*,0}=H_{*} ( F)= \mathrm{Span}_{\z/p}\n ab^{m}\;|\;m\geq0\nn,\;\;(|a|=n+1,\;|b|=n+k+2).$$
    \noindent  In the following, ``$=$" stands for being equal up to  a unit in $\z/p$.  The relations $$d(a)=x=\mathrm{ad}^{0} (y)(x), \;d(a\cdot\af)=x\af, \;(a\cdot\af\in E^{n}_{n,\bullet},\;\; \af\in E^{2}_{0,\bullet})$$\noindent in the $E^{n}$-page
   force $d(ab)=[x,y]=\mathrm{ad}^{1} (y)(x)$ in the 
   $E^{2n+k+3}$-page. The relations $$d(ab)=\mathrm{ad}^{1} (y)(x),\;d(ab\cdot\af)=(\mathrm{ad}^{1} (y)(x))\af, \;(ab\cdot\af\in E^{2n+k+1}_{2n+k+1,\bullet},\;\; \af\in E^{2}_{0,\bullet})$$ in the 
   $E^{2n+k+3}$-page imply $d(ab^{2})=\mathrm{ad}^{2} (y)(x)$ in the 
   $E^{3n+2k+5}$-page. It is not hard to obtain $d(ab^{m})=\mathrm{ad}^{m} (y)(x)$ for all $m$ by induction. \end{proof}

  Recall  that  \vspace{-0.5\baselineskip}\begin{equation}\notag F^{(2)}=\sk_{3n+2k+2} (F)=S^{n+1}\cup_{\af}  e^{2n+k+2},\end{equation}\begin{equation}\notag\text{}F^{(3)}=\sk_{4n+3k+3} (F)=(S^{n+1}\cup_{\af}  e^{2n+k+2})\cup_{\beta}  e^{3n+2k+3}.\end{equation}

\noindent The homology Hopf algebra structures of     the loop spaces of these two skeletons are  as follows.

\begin{lem}\label{lof2} 
\begin{itemize}
    \item[\rm(1)] $H_{*} (\lo\fe)\tg T(x, [x,y])$ as Hopf algebras.
    \item[\rm(2)] $H_{*} (\lo\fs)\tg T(x, [x,y],[[x,y],y])$ as Hopf algebras.
\end{itemize}

\end{lem}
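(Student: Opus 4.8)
The plan is to show that each of $\fe$ and $\fs$ is a simply-connected complex whose mod-$p$ cohomology has trivial cup products; this forces the looped homology to be a free, primitively generated tensor Hopf algebra on generators dual to the cells, and I then name those generators by mapping into $\lo F$. I would begin by recording that $\widetilde{H}^{*}(F;\z/p)$ has trivial cup products. The homology suspension factors as $\widetilde{H}_{q}(\lo F)\tg\widetilde{H}_{q+1}(\s\lo F)\to\widetilde{H}_{q+1}(F)$ through the evaluation $\s\lo F\to F$; since $\s\lo F$ is a suspension its reduced diagonal vanishes, and a coalgebra map carries primitives to primitives, so $\mathrm{im}(\cg')\subseteq\mathrm{Prim}\,\widetilde{H}_{*}(F)$. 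By the transgression computation preceding this lemma, each basis element $\mathbbm{a}\mathbbm{b}^{m}$ of $\widetilde{H}_{*}(F)$ equals $\cg'(\mathrm{ad}^{m}(y)(x))$ and is therefore primitive; hence the reduced diagonal of $\widetilde{H}_{*}(F)$ is zero and all cup products in $\widetilde{H}^{*}(F)$ vanish. Because the CW structures are minimal, the skeletal inclusions $\fe\hookrightarrow F$ and $\fs\hookrightarrow F$ induce ring isomorphisms on $\widetilde{H}^{*}(-;\z/p)$ through the dimensions occupied by their cells; every product of positive-degree classes in $\fe$ or $\fs$ either lands in that range, where it matches the trivial product in $F$, or above the top dimension, where it is automatically zero. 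Thus $\widetilde{H}^{*}(\fe)$ and $\widetilde{H}^{*}(\fs)$ also have trivial cup products.

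Next, for a simply-connected complex of finite type with trivial mod-$p$ cup products the James--Bott--Samelson theorem gives $H_{*}(\lo -;\z/p)\tg T(s^{-1}\widetilde{H}_{*}(-))$ as algebras, the tensor algebra on the desuspended reduced homology. Applying this, $H_{*}(\lo\fe)$ is free on two generators in dimensions $n,\,2n+k+1$, while $H_{*}(\lo\fs)$ is free on three generators in dimensions $n,\,2n+k+1,\,3n+2k+2$. To upgrade these to isomorphisms of Hopf algebras I would argue, exactly as in the proof that $H_{*}(\lo\s X)\tg T(a,b)$ for a two-cell suspension, that each generator is primitive: the bottom generator is spherical, and each higher generator is the transgression of the corresponding homology class of $\fe$ (resp.\ $\fs$) in the path--loop Serre spectral sequence, and transgressions land in the primitives.

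Finally I would name the generators through the looped skeletal inclusions. Since the cell structure of $F$ is minimal, $\fe\hookrightarrow F$ and $\fs\hookrightarrow F$ become, after looping, homology isomorphisms through dimensions $3n+2k$ and $4n+3k+1$ respectively; both ranges contain the generator dimensions $n,\,2n+k+1,\,3n+2k+2$, the needed inequalities $2n+k+1\le 3n+2k$ and $3n+2k+2\le 4n+3k+1$ being equivalent to $n+k\ge 1$. These looped inclusions are maps of Hopf algebras, so the primitive generators of $H_{*}(\lo\fe)$ and $H_{*}(\lo\fs)$ map to primitive generators of $H_{*}(\lo F)=T(\{\mathrm{ad}^{m}(y)(x)\})$, which in these dimensions are exactly $x=\mathrm{ad}^{0}(y)(x)$, $[x,y]=\mathrm{ad}^{1}(y)(x)$ and $[[x,y],y]=\mathrm{ad}^{2}(y)(x)$. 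Matching generators by dimension then yields $H_{*}(\lo\fe)\tg T(x,[x,y])$ and $H_{*}(\lo\fs)\tg T(x,[x,y],[[x,y],y])$ as Hopf algebras.

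The main obstacle is the middle step: justifying freeness of the looped homology (the collapse underlying Bott--Samelson) together with primitivity of all the generators, combining the triviality of the cup products with the transgression argument. This is most delicate for the three-cell complex $\fs$, where one must verify that there is no room among the three generators for a relation or a non-primitive correction; the triviality of the reduced diagonal inherited from $F$ is precisely what rules this out. By contrast the connectivity bookkeeping of the last step is routine once the range inequalities above are noted.
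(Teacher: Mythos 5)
Your proposal collapses at its central step. The principle you invoke --- that a simply-connected finite-type complex with trivial mod-$p$ cup products has $H_{*}(\lo Y)\tg T(s^{-1}\widetilde{H}_{*}(Y))$ --- is not a theorem. Bott--Samelson (and its James-theoretic variants) requires $Y$ to be a suspension, or at least a co-H space; triviality of cup products is far weaker and does not control the higher (co)operations that govern loop homology. A standard counterexample: for $Y=(S^{2}\vee S^{2})\cup_{[[\iota_{1},\iota_{2}],\iota_{2}]}e^{6}$ all cup products vanish for degree reasons, yet $H_{*}(\lo Y)$ is the homology of the free differential graded algebra $(T(a,b,c),\,dc=[[a,b],b])$ and is visibly not free on three generators. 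The relevant spaces here are exactly of this flavour: $\fe=S^{n+1}\cup_{\alpha}e^{2n+k+2}$ is attached by a Whitehead product and $\fs$ by the higher Whitehead product $\beta$, so freeness of $H_{*}(\lo\fe)$ and $H_{*}(\lo\fs)$ is precisely the nontrivial content of the lemma and cannot be extracted from the cup-product structure alone. Your own closing remark identifies this as ``the main obstacle'' but the proposed resolution --- ``the triviality of the reduced diagonal inherited from $F$ is precisely what rules this out'' --- is the false implication again.

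The missing input is the one the paper actually uses: Lem.~\ref{hltd} already gives $H_{*}(\lo F)\tg T(\{\mathrm{ad}^{m}(y)(x)\})$ as a \emph{free} Hopf algebra, and the pair $(\lo F,\lo\fe)$ is $(3n+2k+1)$-connected, so $x$ and $[x,y]$ lift to $H_{*}(\lo\fe)$ and every monomial in them injects into $T(\{\mathrm{ad}^{m}(y)(x)\})$; this embeds $T(x,[x,y])$ into $H_{*}(\lo\fe)$, a count in the path--loop Serre spectral sequence over $\fe$ shows the embedding is onto, and primitivity is pulled back along the resulting monomorphism into $H_{*}(\lo F)$ (similarly for $\fs$). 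Your first step (primitivity of $\widetilde{H}_{*}(F)$ via the homology suspension) and your last step (identifying generators through the looped skeletal inclusions, with the connectivity bookkeeping) are fine, but without replacing the middle step by an argument that genuinely uses the freeness of $H_{*}(\lo F)$ --- or an equivalent analysis of the Adams--Hilton model of the attaching maps --- the proof does not go through.
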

\begin{proof}
\begin{itemize}
    \item[\rm(1)]
     Consider the $\z/p$-homology Leray-Serre spectral sequence $\n E^{\bullet}_{\bullet,\bullet},d_{\bullet}\nn$ associated with the homotopy fibration $\lo \fe\rightarrow*\rightarrow \fe$. Recall that it is a  $H_{*} (\lo \fe)$-module spectral sequence,  $d(u v)=(du) v$ where $u\in E_{\bullet,0}^{\bullet}$ and $v\in E_{0,\bullet}^{2}$, (see Proposition \ref{mdpxl}). Notice that the pair $(\lo F, \lo\fe)$ is $((3n+2k+2)-1)$-connected. Thus we have
     $H_{\bullet\leq s} (\lo\fe)\xyd H_{\bullet\leq s} (\lo F)$ where $s=|[x,y]|$. Additionally, $(\lo i)_{*}$ is a Hopf algebra homomorphism. Combining with the algebra structure of  $H_{*} (\lo F)$, we infer that in $H_{*} (\lo\fe)$, every monomial of the form $$x^{i_{1}} [x,y]^{i_{2}}x^{i_{3}} [x,y]^{i_{4}}\cdots x^{i_{t}} [x,y]^{i_{t+1}}$$ is nonzero where the numbers $i_{\bullet}$ denote non-negative  integers. Then, a calculation following the  method in the proof of Lemma \ref{pxlmb} shows that $H_{*} (\lo\fe)\tg T(x, [x,y])$ as  algebras. To show  $x, [x,y]\in H_{*} (\lo\fe)$ are primitive, notice that $\lo\fe\rightarrow\lo F$ induces a monomorphism of tensor algebras (thereby of Hopf algebras) and $x, [x,y]\in H_{*} (\lo F)$ are primitive. \item [\rm(2)] The proof is similar to (1).  Set $w=[[x,y],y]$. Since $F^{(3)}=\sk_{4n+3k+3} (F)$, we see that the pair $(F,F^{(3)})$ is $((4n+3k+4)-1)$-connected. Hence the pair $(\Omega F,\Omega F^{(3)})$ is $((4n+3k+3)-1)$-connected. In particular,
\[
H_{\bullet\le |w|} (\Omega F^{(3)}) \xrightarrow[]{\cong}
H_{\bullet\le |w|} (\Omega F),
\qquad |w|=|[[x,y],y]|.
\]
Therefore the elements $x,[x,y],w\in H_{*} (\Omega F)$ determine elements with the same names in $H_{*} (\Omega F^{(3)})$. Consider the $\mathbb{Z}/p$-homology Leray-Serre spectral sequence
$\n E^{\bullet}_{\bullet,\bullet},d_\bullet\nn$
associated with the homotopy fibration
$\Omega F^{(3)}\rightarrow * \rightarrow F^{(3)}.$ As in assertion \rm(1), this is a  $H_{*} (\Omega F^{(3)})$-module spectral sequence. Combining with the above information, a calculation parallel to that in the proof of Lemma~\ref{pxlmb} shows that $H_{*} (\Omega F^{(3)})\cong T(x,[x,y],w)$ as algebras. Finally, the inclusion $\Omega F^{(3)}\rightarrow \Omega F$ induces a monomorphism of Hopf algebras. Since the images of $x,[x,y],w$ in $H_{*} (\Omega F)$ are primitive, it follows that $x,[x,y],w$ are primitive in $H_{*} (\Omega F^{(3)})$ as well. Therefore,  $H_{*} (\Omega F^{(3)})\cong T(x,[x,y],[[x,y],y])$ as Hopf algebras.\end{itemize}\end{proof}

We  use the identification $H_{*} (\lo F)= T\n[\mathrm{ad}^{m} (x)(y),y]\;|\; m\geq0\nn,$ which is a sub-Hopf algebra of $H_{*} (\lo\s C_{f})=T(x,y)$.

Write the element $[x,y]\in H_{*} (\lo\fe)= T(x, [x,y])$ by $z$. Thus $|z|=2n+k+1.$

In $H_{*} (\lo G),$ the element $\mathrm{ad}^{1} (z)(x)=[x,z]=[x,[x,y]]=\pm[[x,y],x]$ given in the following lemma will be critical.
\begin{lem}\label{lpgtd} Let $G$ be the homotopy fiber of the pinch map $\fe\rightarrow S^{2n+k+2}$. Then, there exists an isomorphism between  Hopf algebras where $|x|=n $ and $|z|=2n+k+1$, $H_{*} (\lo G)\tg T(\n\mathrm{ad}^{m} (z)(x)\;|\; m\geq0\nn).$ 
    
\end{lem}

\begin{proof} The lemma follows from Theorem~\ref{dlyde} and Lemma~\ref{lof2} (1).\end{proof}

\subsection{Some calculations}
The following lemma follows directly from a result of Barcus and Barratt \cite[Corollary~7.4, p.~69]{BG}. Notice that $[[\e,\e],\e]=\frac{1}{3} (3[[\e,\e],\e])=0$, where $\e$ denotes the identity map of a path-connected sphere.
\begin{lem}\label{whgs} Let $\mathbbm{x}\in\pi_{i} (S^{m})$, $\mathbbm{y}\in\pi_{j} (S^{m})$ and write $\mathrm{id}_{S^{m}}=\e_{m}$ where $m\geq2$. Then their Whitehead product   is given by $[\mathbbm{x},\mathbbm{y}]=\pm[\e_{m},\e_{m}]\hc\s^{m-1}\mathbbm{y}\hc\s^{j-1}\mathbbm{x}.$\hfill\boxed{}
    
\end{lem}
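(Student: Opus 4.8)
The plan is to view the two classes as the composites $\mathbbm{x}=\e_{m}\hc\mathbbm{x}$ and $\mathbbm{y}=\e_{m}\hc\mathbbm{y}$ (regarding $\mathbbm{x},\mathbbm{y}$ as maps between spheres) and to feed these into the Whitehead-product expansion of Barcus and Barratt \cite[Cor.~7.4, pp.~69]{BG} applied to $[\e_{m}\hc\mathbbm{x},\,\e_{m}\hc\mathbbm{y}]$. That corollary produces a leading term together with a sum of correction terms. First I would identify the leading term: by the naturality of the Whitehead product together with the behaviour of $[\e_{m},\e_{m}]$ under suspension in each slot, the leading term is exactly the asserted $\pm[\e_{m},\e_{m}]\hc\s^{m-1}\mathbbm{y}\hc\s^{j-1}\mathbbm{x}$, where $\s^{j-1}\mathbbm{x}\colon S^{i+j-1}\to S^{m+j-1}$ and $\s^{m-1}\mathbbm{y}\colon S^{m+j-1}\to S^{2m-1}$ compose into the domain $S^{2m-1}$ of $[\e_{m},\e_{m}]$, so that all dimensions match.

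Next I would account for the correction terms. In the Barcus-Barratt expansion each correction term factors through an iterated Whitehead product of $\e_{m}$ of bracket length at least three, composed with Hopf invariants of $\mathbbm{x}$ and $\mathbbm{y}$; the relevant bracket is $[[\e_{m},\e_{m}],\e_{m}]$. The key observation, recorded in the remark preceding the statement, is that $[[\e_{m},\e_{m}],\e_{m}]=\tfrac{1}{3}\bigl(3[[\e_{m},\e_{m}],\e_{m}]\bigr)=0$: the graded Jacobi identity for the Whitehead product of the identity classes collapses to $3[[\e_{m},\e_{m}],\e_{m}]=0$, and since we are localized at a prime $p\neq3$ the integer $3$ is a unit in $\z_{(p)}$. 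Consequently every correction term is annihilated, only the leading term survives, and we obtain $[\mathbbm{x},\mathbbm{y}]=\pm[\e_{m},\e_{m}]\hc\s^{m-1}\mathbbm{y}\hc\s^{j-1}\mathbbm{x}$.

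The main obstacle will be the precise bookkeeping in the second step: one must verify that \emph{every} correction term occurring in \cite[Cor.~7.4, pp.~69]{BG}, not only the first, factors (after reorganization) through a bracket of length $\geq 3$ in $\e_{m}$, hence through $[[\e_{m},\e_{m}],\e_{m}]$, so that the hypothesis $p\neq3$ indeed kills all of them; and one must settle the ordering of the two suspensions and the resulting sign in the leading term, which is exactly why the identity is claimed only up to the indicated $\pm$.
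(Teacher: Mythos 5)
Your proposal is correct and follows essentially the same route as the paper: the authors likewise invoke Barcus--Barratt \cite[Cor.~7.4]{BG} and kill all correction terms by the observation $[[\e_{m},\e_{m}],\e_{m}]=\tfrac{1}{3}(3[[\e_{m},\e_{m}],\e_{m}])=0$, which is valid because Section 3 works localized at a prime $p\neq 3$ so that $3$ is a unit in $\z_{(p)}$. Your write-up merely makes explicit the bookkeeping (Jacobi identity, identification of the leading term, the sign ambiguity) that the paper leaves implicit.
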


Recall that 
$G$ is the homotopy fiber of the pinch map  $$S^{n+1}\cup_{ [\e_{n+1},\s f]}e^{2n+k+2}=\fe\rightarrow S^{2n+k+2}.$$ Making use of Gray's relative James construction, we know $$G=J(M_{S
^{n+1}}, S^{2n+k+1})=S^{n+1}\cup _{\delta}e^{3n+k+2}\cup e^{5n+2k+3}\cup\cdots.$$ In addition, $\delta=[\e_{n+1},[\e_{n+1},\s f]].$

Then, Lemma~\ref{whgs} implies the following lemma.
\begin{lem}\label{lpgfl}
    $G\simeq(S^{n+1}\vee S^{3n+k+2})\cup e^{5n+2k+3}\cup\cdots.$\hfill\boxed{}
\end{lem}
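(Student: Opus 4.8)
The plan is to reduce the entire assertion to one computation: that the attaching map $\delta=[\e_{n+1},[\e_{n+1},\s f]]\in\pi_{3n+k+1}(S^{n+1})$ of the cell $e^{3n+k+2}$ is null-homotopic. Once $\delta\simeq*$ is known, the two bottom cells of $G$ assemble into the subcomplex $S^{n+1}\cup_{\delta}e^{3n+k+2}\simeq S^{n+1}\vee S^{3n+k+2}$, and it remains only to transport this homotopy equivalence across the higher cells $e^{5n+2k+3}\cup\cdots$ to obtain the stated homotopy type of $G$.

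To prove $\delta\simeq*$, I would apply Lemma~\ref{whgs} to $\delta=[\e_{n+1},\af]$ with $\mathbbm{x}=\e_{n+1}\in\pi_{n+1}(S^{n+1})$ and $\mathbbm{y}=\af=[\e_{n+1},\s f]\in\pi_{2n+k+1}(S^{n+1})$, so that $m=n+1$ and $j=2n+k+1$. This yields
$$\delta=\pm[\e_{n+1},\e_{n+1}]\hc\s^{n}\af\hc\s^{2n+k}\e_{n+1}.$$
The right-hand factor $\s^{2n+k}\e_{n+1}$ is the identity self-map of $S^{3n+k+1}$ and may be discarded, leaving $\delta=\pm[\e_{n+1},\e_{n+1}]\hc\s^{n}\af$. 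Since $\af$ is itself a Whitehead product and a single suspension annihilates every Whitehead product, $\s\af=0$, whence $\s^{n}\af=0$ for $n\geq2$; equivalently, rewriting $\af=\pm[\e_{n+1},\e_{n+1}]\hc\s^{n+1}f$ by Lemma~\ref{whgs} gives $\s^{n}\af=\pm(\s^{n}[\e_{n+1},\e_{n+1}])\hc\s^{2n+1}f$ together with $\s[\e_{n+1},\e_{n+1}]=0$. In either form $\s^{n}\af=0$, and therefore $\delta=0$.

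The computation just sketched is short, so the genuinely delicate step is the final, more formal one: promoting the splitting $S^{n+1}\cup_{\delta}e^{3n+k+2}\simeq S^{n+1}\vee S^{3n+k+2}$ to a homotopy equivalence of the whole (infinite) complex $G$. I would handle this by the standard principle that homotopic attaching maps yield homotopy equivalent adjunction spaces: composing each higher attaching map with the equivalence of the bottom subcomplex --- using the homotopy extension property, and passing to the colimit over skeleta via a mapping-telescope argument --- carries every cell $e^{5n+2k+3},\dots$ along unchanged, which establishes $G\simeq(S^{n+1}\vee S^{3n+k+2})\cup e^{5n+2k+3}\cup\cdots$.
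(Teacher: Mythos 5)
Your argument is correct and follows essentially the same route as the paper: both reduce the lemma to showing that the attaching map $\delta=[\iota_{n+1},[\iota_{n+1},\Sigma f]]$ of the cell $e^{3n+k+2}$ in $G=J(M_{S^{n+1}},S^{2n+k+1})$ vanishes, and both extract this from the Barcus--Barratt formula of Lem.~\ref{whgs}. The one place where you diverge is the final vanishing step. The paper's (unwritten, one-line) justification is signposted by the remark immediately preceding Lem.~\ref{whgs}, namely that the triple Whitehead product satisfies $[[\iota,\iota],\iota]=\tfrac{1}{3}(3[[\iota,\iota],\iota])=0$ once $3$ is inverted; iterating Lem.~\ref{whgs} writes $\delta$ as a composite through $[[\iota_{n+1},\iota_{n+1}],\iota_{n+1}]$, which kills it at $p\neq 3$. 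You instead kill the factor $\Sigma^{n}\alpha=\Sigma^{n}[\iota_{n+1},\Sigma f]$ by the fact that a single suspension annihilates every Whitehead product, which is valid since $n\geq 2$, and has the minor advantage of working at every prime (the hypothesis $p\neq 3$ is still needed elsewhere in the paper, for $\mathrm{SQ}_{3}^{\mathrm{max}}$, so nothing is gained globally). Your closing point about propagating the equivalence $S^{n+1}\cup_{\delta}e^{3n+k+2}\simeq S^{n+1}\vee S^{3n+k+2}$ across the higher cells via the homotopy extension property and a telescope over skeleta is routine and correctly handled; the paper simply suppresses it.
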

Let us examine the inclusions of the bottom cells.

\begin{lem}\label{xr} Let $A,\,B$ be the homotopy fibers of the    inclusions  $$S^{3n+k+1}\hookrightarrow \s^{2n+k+1}C_{f}\;\text{and}\,\;  \fe\hookrightarrow\fs,$$ respectively. Then, we have $\sk_{3n+2k+1} (A)=\sk_{3n+2k+1} (\lo B)=S^{3n+2k+1}$ and $\sk_{3n+2k+2} ( B)=S^{3n+2k+2}$.
    
\end{lem}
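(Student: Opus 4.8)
The plan is to determine these skeleta by computing the relevant mod $p$ homologies via the already-established Hopf-algebra descriptions of looped fibres, and then reading off the bottom cells. The unifying principle is that for a simply-connected space $W$ with looped homology $H_*(\lo W)$ known as a tensor algebra on generators in specified degrees, the homology suspension $\cg'\colon \widetilde H_\bullet(\lo W)\to\widetilde H_{\bullet+1}(W)$ detects precisely which classes are carried by the cells of $W$; combined with the Serre spectral sequence argument already used in Lem.~\ref{zzd}, this pins down the bottom cell(s) of a fibre.

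First I would handle $A$. Here $\s^{2n+k+1}C_f=S^{3n+k+1}\cup e^{3n+2k+2}$ and $A$ is the fibre of the bottom-cell inclusion $S^{3n+k+1}\hookrightarrow\s^{2n+k+1}C_f$. By the generalities behind Prop.~\ref{tgd} (with the roles of the two cells as in Lem.~\ref{hltd}), $H_*(\lo(\s^{2n+k+1}C_f))\tg T(x',y')$ with $|x'|=3n+k$, $|y'|=3n+2k+1$, and the fibre of the inclusion of the bottom sphere has looped homology $T(\{\mathrm{ad}^m(y')(x')\mid m\geq0\})$ in the manner of Lem.~\ref{hltd}; the lowest generator beyond $x'$ gives $\sk(A)$. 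Since $H_{3n+2k+1}(A)$ comes from $y'$ in degree $3n+2k+1$, applying the homology suspension relation $\mathbbm a\mathbbm b^m=\cg'(\mathrm{ad}^m(y)(x))$ together with a Serre spectral sequence comparison (exactly as in Lem.~\ref{zzd}) yields $\sk_{3n+2k+1}(A)=S^{3n+2k+1}$. For $\sk_{3n+2k+1}(\lo B)$, I would use Lem.~\ref{lpgtd}: $H_*(\lo G)\tg T(\{\mathrm{ad}^m(z)(x)\mid m\geq0\})$ with $|x|=n$, $|z|=2n+k+1$, and since $B$ is the fibre of $\fe\to\fs$, looping gives access to the same generators; the class $[x,z]=\mathrm{ad}^1(z)(x)$ sits in degree $3n+k+1$, so $\lo B$ has its first cell above $S^{2n+k}$ realizing this, forcing $\sk_{3n+2k+1}(\lo B)=S^{3n+2k+1}$ after the degree bookkeeping.

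For $\sk_{3n+2k+2}(B)$ I would compare $H_*(\lo\fe)\tg T(x,[x,y])$ and $H_*(\lo\fs)\tg T(x,[x,y],[[x,y],y])$ from Lem.~\ref{lof2}. The map $\fe\to\fs$ is the skeletal inclusion, so $B$ is $(3n+2k+1)$-connected with $H_{3n+2k+2}(B)$ detecting the new generator $[[x,y],y]$ of $H_*(\lo\fs)$ (of degree $3n+3k+2$ in the loop space, transgressing to degree $3n+2k+2$ under $\cg'$), whence the bottom cell of $B$ is $S^{3n+2k+2}$. Concretely I would run the Serre spectral sequence for $\lo B\to\lo\fe\to\lo\fs$ (or equivalently examine $B\to\fe\to\fs$), observe that the cokernel of $H_*(\lo\fe)\to H_*(\lo\fs)$ is generated exactly by the $[[x,y],y]$-family, and transgress to locate the cell.

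The main obstacle I anticipate is the degree bookkeeping that distinguishes $\sk(A)$ from $\sk(\lo B)$: both are claimed to be $S^{3n+2k+1}$, but they arise from genuinely different fibrations (one from a suspended two-cell complex, one from looping the fibre $B$), so I must be careful that the homology-suspension shifts and the connectivity estimates line up, and that no lower-degree class (a product of generators, or a class like $\mathrm{ad}^m$ for intermediate $m$) sneaks in below the claimed bottom cell. Ruling out such spurious low-dimensional classes — by checking that the tensor-algebra generators in the relevant degree are exactly the transgressive ones and invoking minimality of the CW structure as in Lem.~\ref{j2} — is the delicate step; the rest is the now-routine transgression calculation.
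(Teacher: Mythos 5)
Your overall strategy --- read off the bottom cells from mod $p$ homology computed via Serre spectral sequences --- is exactly what the paper does (its proof is essentially one line to that effect), and the correct core argument does appear in your write-up: the pairs $(\s^{2n+k+1}C_{f},S^{3n+k+1})$ and $(\fs,\fe)$ are $(3n+2k+1)$- and $(3n+2k+2)$-connected respectively, so the fibres of the inclusions are highly connected and the spectral sequence (as in the proof of Lem.~\ref{zzd}) forces a single bottom cell in the stated dimension, with the statement for $\lo B$ following by looping. However, several of the supporting claims you interpose are genuinely wrong and would derail the proof if relied upon. First, you apply the formula $H_{*}(\lo(\text{fibre}))\tg T(\n\mathrm{ad}^{m}(v)(u)\nn)$ of Prop.~\ref{tgd} to $A$, the fibre of the bottom-cell \emph{inclusion} $S^{3n+k+1}\hookrightarrow\s^{2n+k+1}C_{f}$; that formula describes the fibre of the \emph{pinch} map. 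Taken literally it would give $A$ a bottom cell in dimension $3n+k+1$ (the degree of $x'$ plus one), contradicting the claimed $S^{3n+2k+1}$; the fibre of the inclusion is in fact $(3n+2k)$-connected, which is the whole point. Second, for $\lo B$ you invoke Lem.~\ref{lpgtd}, but $G$ (the fibre of the pinch map $\fe\rightarrow S^{2n+k+2}$) is not $B$ (the fibre of the skeletal inclusion $\fe\rightarrow\fs$), and the class $[x,z]$ you single out sits in degree $3n+k+1$, which is not $3n+2k+1$ unless $k=0$, so the inference is a non sequitur.

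Third, the degree bookkeeping you correctly flag as the delicate point is in fact botched where it matters: $[[x,y],y]$ has degree $n+2(n+k+1)=3n+2k+2$, not $3n+3k+2$, and the homology suspension $\cg'$ \emph{raises} degree by one, sending it to the top cell of $\fs$ in dimension $3n+2k+3$, not to $3n+2k+2$. The cell of $B$ in dimension $3n+2k+2$ is located not by suspending $[[x,y],y]$ into $\fs$ but by the connectivity estimate (Blakers--Massey) for the pair $(\fs,\fe)$, or equivalently by the transgression $d_{3n+2k+2}\colon H_{3n+2k+2}(\lo\fs)\rightarrow H_{3n+2k+1}(\lo B)$ in the Serre spectral sequence of $\lo B\rightarrow\lo\fe\rightarrow\lo\fs$, which must kill $[[x,y],y]$ since that class is not in the image of $(\lo j_{2})_{*}$. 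If you excise the mis-citations of Prop.~\ref{tgd} and Lem.~\ref{lpgtd} and correct the degrees, what remains is precisely the paper's argument.
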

\begin{proof} The argument is straightforward, as the homology groups are readily determined via the Leray-Serre spectral sequences. 
\end{proof}
\;\\ \indent Remark~\ref{Fdll} implies that the pair $(\lo F,\lo\fs)$ is $((4n+3k+3)-1)$-connected, and  that the pair $(\lo \fs,\lo\fe)$ and  the pair $(\lo F,\lo\fe)$    are $((3n+2k+2)-1)$-connected.
Let $i_{0}\colon\s^{2n+k+1}C_{f}\hookrightarrow\lo F$ be the inclusion via the identification $\lo F=\lo\mathrm{SQ}_{3}^{\max} (C_{f})\times K$, where $K$ is the product of the remaining factors from applying the Hilton-Milnor  splitting  for the decomposition given in Lemma \ref{fjdlyw}. Let  $$i_{1}\colon  S^{3n+k+1}\rightarrow \s^{2n+k+1}C_{f},\;\;j_{2}\colon  \fe\rightarrow\fs $$ be  the inclusions and
$i_{2}\colon   \s^{2n+k+1}C_{f}\rightarrow\lo\fs$ be the restriction of the inclusion $i_{0}\colon\s^{2n+k+1}C_{f}\hookrightarrow\lo F$. 

Owing to the connectivity properties of the pairs involved,  there exists a map  $\phi\colon   S^{3n+k+1}\rightarrow \lo\fe$  such that 
the following diagram is commutative
\begin{equation}\label{phiddy}
\begin{tikzcd}
S^{3n+k+1} \arrow[r,"i_{1}"] \arrow[d,"\phi"'] & \s^{2n+k+1}C_{f} \arrow[d,"i_{0}"] \\
\lo\fe \arrow[r,hook] & \lo F.
\end{tikzcd}\end{equation}
\begin{lem}\label{cfjht} The following diagram is commutative,
\[
\begin{tikzcd}
S^{3n+k+1} \arrow[r,"i_{1}"] \arrow[d,"\phi"'] & \s^{2n+k+1}C_{f} \arrow[d,"i_{2}"] \\
\lo\fe \arrow[r,"\lo j_{2}"] & \lo\fs.
\end{tikzcd}
\]
 Moreover, for a suitable  generator $\e$ of  
$H_{3n+k+1} (S^{3n+k+1})$,  we have 
\begin{align}
i_{1*}\e &= [[x,y],x]\in H_{3n+k+1} (\s^{2n+k+1}C_{f}), \notag \\
 i_{2*}[[x,y],x]&=  [[x,y],x]\in H_{3n+k+1} (\lo\fe), \notag \\
 (\lo j_{2})_{*}[[x,y],x]&=  [[x,y],x]\in H_{3n+k+1} (\lo\fs) \notag \\
 \;\text{and}\quad \phi_{*}\e&= [[x,y],x]\in H_{3n+k+1} (\lo\fe). \notag 
\end{align}
\end{lem}
\begin{proof} The homomorphism $\pi_{3n+k+1} (\lo \fs\hookrightarrow\lo F)$ is injective; the maps $\lo j_{2}\hc\kj$ and $i_{2}\hc i_{1}$ 
are equal after composing with  the map $\lo \fs\hookrightarrow\lo F$. Thus $\lo j_{2}\hc\kj=i_{2}\hc i_{1}$. The remaining part of 
the lemma follows from Lemma~\ref{lof2}, Lemma~\ref{spn1} and the connectivity properties of the pairs involved.\end{proof}
\subsection{The maps related to $\beta$}\label{iscor}

By Lemma~\ref{xr}, we may take
\[
i_{3}\colon S^{3n+2k+2}=\sk_{3n+k+2} (B)\rightarrow B,\quad
i_{4}\colon S^{3n+2k+1}=\sk_{3n+k+1} (\lo B)\rightarrow \lo B
\]
to be the canonical inclusions, and let $i_{5}\colon   B\rightarrow \fe$ denote the inclusion of the homotopy fiber.

The diagram in Lemma~\ref{cfjht} results in a morphism of homotopy fibrations for some map $\rho$, \vspace{-0.8\baselineskip}
\begin{equation}\label{yrd}\begin{tikzcd}
A \arrow[r] \arrow[d,"\rho"] & S^{3n+k+1}\arrow[r,"i_{1}"] \arrow[d,"\phi"] & \s^{2n+k+1}C_{f} \arrow[d,"i_{2}"] \\
\lo B \arrow[r,"\lo i_{5}"]            & \lo\fe \arrow[r,"\lo j_{2}"]            & \lo\fs.
\end{tikzcd}
\end{equation}

\noindent Considering the inclusions of the bottom cells of $A $ and $\lo B$ and applying  the cellular approximation theorem, we conclude that there exist  maps  $\xi, \ck$ and $\lt$ making the following diagram commutative, \vspace{-0.4\baselineskip}

\begin{equation}\label{mldt}
 \begin{tikzcd}
&S^{3n+2k+1} \arrow[d, hook] \arrow[ddl,"\ck"'] \arrow[dr,"\xi"]& & \\
  & A \arrow[r] \arrow[d,"\rho"] &S^{3n+k+1}\arrow[d,"\kj"] \arrow[r,"i_{1}"] & \s^{2n+k+1}C_{f} \arrow[d,"i_{2}"] \\
S^{3n+2k+1} \arrow[r, hook," i_{4}"] \arrow[rr, bend right=25, "\lt"] & \lo B \arrow[r,"\lo i_{5}"] & \lo\fe \arrow[r,"\lo j_{2}"] & \lo\fs.
\end{tikzcd}\end{equation}

\noindent Consequently, we have a commutative diagram, \vspace{-0.8\baselineskip}

\begin{equation}\label{sgxlzm}\begin{tikzcd}
S^{3n+2k+1} \arrow[r,"\xi"] \arrow[d,"\ck"] & S^{3n+k+1}\arrow[r,"i_{1}"] \arrow[d,"\phi"] & \s^{2n+k+1}C_{f} \arrow[d,"i_{2}"] \\
S^{3n+2k+1}\; \arrow[r,"\lt"]            & \lo\fe \arrow[r,"\lo j_{2}"]            & \lo\fs.
\end{tikzcd}\end{equation}
\noindent By  Lemma~\ref{zzd}, the map $\xi$ can be taken as $\s^{2n+k+1}f$. We claim that the self-map $\ck$ can be chosen as  $\mathrm{id}.$

\begin{lem} The degree of $\ck$ is a unit  in the ring $\z_{(p)}.$ So $\ck$ can be taken as $\mathrm{id}$, after replacing $\lt$ by $\ell\lt$ for some unit $\ell$ in $\z_{(p)}.$
    
\end{lem}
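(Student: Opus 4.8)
The plan is to convert the statement about the degree of $\ck$ into a homology computation and then transport an isomorphism along the morphism of Serre long exact sequences provided by Diag.~(\ref{yrd}). By Lem.~\ref{xr} the spaces $A$ and $\lo B$ are both $(3n+2k)$-connected with bottom cell $S^{3n+2k+1}$, so $H_{3n+2k+1}(A)\tg H_{3n+2k+1}(\lo B)\tg\z/p$, and $\ck$ is precisely the restriction of $\rho$ to these bottom cells. Consequently $\deg(\ck)$ is a unit in $\z_{(p)}$ if and only if $\rho_{*}\colon H_{3n+2k+1}(A)\to H_{3n+2k+1}(\lo B)$ is an isomorphism; this is the assertion I would prove.

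First I would feed Diag.~(\ref{yrd}) into the naturality of the Serre long exact sequence (Lem.~\ref{fjdt}). Since $\s^{2n+k+1}C_{f}$ and $\lo\fs$ are both at least $(n-1)$-connected while $A$ and $\lo B$ are both $(3n+2k)$-connected, the comparison is valid up to degree $4n+2k$, which contains the degrees $3n+2k+1$ and $3n+2k+2$ because $n\geq2$. This furnishes the commutative relation $\rho_{*}\hc\pa=\pa'\hc i_{2*}$, where $\pa\colon H_{3n+2k+2}(\s^{2n+k+1}C_{f})\to H_{3n+2k+1}(A)$ and $\pa'\colon H_{3n+2k+2}(\lo\fs)\to H_{3n+2k+1}(\lo B)$ are the connecting homomorphisms of the top and bottom rows. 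I would then establish that $\pa$ is an isomorphism: in the Serre sequence of the top row $H_{3n+2k+2}(S^{3n+k+1})=0$ forces injectivity, while $H_{3n+2k+1}(S^{3n+k+1})=0$ (for $k\geq1$), respectively the injectivity of $i_{1*}$ on the matching bottom cells (for $k=0$), forces surjectivity.

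The crux lies in analyzing the bottom connecting map. By Lem.~\ref{spn} and the identification used for the bottom cell in Lem.~\ref{cfjht}, the map $i_{2*}$ carries the degree-$(3n+2k+2)$ generator of $\s^{2n+k+1}C_{f}$ to $[[x,y],y]\in H_{3n+2k+2}(\lo\fs)$. By Lem.~\ref{lof2}, $(\lo j_{2})_{*}$ is the inclusion of tensor algebras $T(x,[x,y])\hookrightarrow T(x,[x,y],[[x,y],y])$, and exactness of the bottom row gives $\Ker\pa'=\mathrm{im}(\lo j_{2})_{*}$; so it suffices to check $[[x,y],y]\notin\mathrm{im}(\lo j_{2})_{*}$. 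This I would settle by a word-length count inside $T(x,y)=H_{*}(\lo\s C_{f})$: every element of the subalgebra $T(x,[x,y])$ has as many tensor letters $y$ as factors $[x,y]$, so carrying two letters $y$ requires total degree at least $2(2n+k+1)=4n+2k+2>3n+2k+2$, whereas $[[x,y],y]$ already contains two $y$'s in degree $3n+2k+2$. Hence $[[x,y],y]\notin\mathrm{im}(\lo j_{2})_{*}$, so $\pa'([[x,y],y])\neq0$ and is therefore a generator of the one-dimensional group $H_{3n+2k+1}(\lo B)$.

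Combining these facts, $\rho_{*}\hc\pa=\pa'\hc i_{2*}$ sends the top-cell generator of $\s^{2n+k+1}C_{f}$ to a generator of $H_{3n+2k+1}(\lo B)$; as $\pa$ is an isomorphism, $\rho_{*}$, and hence $\ck_{*}$, is an isomorphism on $H_{3n+2k+1}(-)$, so $\deg(\ck)$ is a unit $u\in\z_{(p)}$. Finally, $p$-locally $\ck\simeq[u]$ as a self-map of $S^{3n+2k+1}$, and commutativity of the left square yields $\phi\hc h=\lt\hc\ck=\lt\hc[u]=u\,\lt$; thus replacing $\ck$ by $\mathrm{id}$ and $\lt$ by $\ell\lt$ with $\ell=u$ leaves the diagram commutative. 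I expect the main obstacle to be the bookkeeping behind $\pa'([[x,y],y])\neq0$ (the word-length count together with the precise identification of $i_{2*}$ on the top cell), and the $k=0$ boundary case of the surjectivity of $\pa$; every other step is a formal consequence of the naturality of Lem.~\ref{fjdt} and the loop-space homologies already computed in Lem.~\ref{lof2}.
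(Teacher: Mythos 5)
Your proposal is correct and follows essentially the same route as the paper: apply the naturality of the Serre long exact sequence (Lem.~\ref{fjdt}) to Diag.~(\ref{yrd}), use Lem.~\ref{spn} to see that $i_{2*}$ hits $[[x,y],y]$, use Lem.~\ref{lof2} to see that $[[x,y],y]$ generates $\cok(\lo j_{2})_{*}$ in degree $3n+2k+2$ (your word-length count is exactly the justification the paper leaves implicit), and conclude $\rho_{*}\neq0$, hence $\ck_{*}\neq0$ on mod-$p$ homology, hence $\deg(\ck)$ is a unit in $\z_{(p)}$. The only difference is that you additionally verify $\pa_{1}$ is an isomorphism, whereas the paper only needs $\rho_{*}\hc\pa_{1}\neq0\Rightarrow\rho_{*}\neq0$; this extra care is harmless.
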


\begin{proof}
    Consider the induced homomorphism $H_{3n+2k+1} (\rho)=\rho_{*}$. The space $\s^{2n+k+1}C_{f}$ is $((3n+k+1)-1)$-connected, and hence $(n-1)$-connected. 
The naturality of the Leray-Serre long exact sequence (\cite[Proposition~3.2.1, p.~68]{ko1996})  dictates  the following commutative diagram with exact second row,\[\begin{tikzcd}
  & H_{3n+2k+2} (\s^{2n+k+1}C_{f}) \arrow[r,"\pa_{1}"] \arrow[d,"i_{2*}"] & H_{3n+2k+1} (A) \arrow[d,"\rho_{*}"] \\
H_{3n+2k+2} (\lo\fe) \arrow[r, "(\lo j_{2})_{*}"]            & H_{3n+2k+2} (\lo\fs) \arrow[r,"\pa_{2}"]            & H_{3n+2k+1} (\lo B).
\end{tikzcd}
\]
\noindent The exactness gives $\mathrm{Im} (\pa_{2})\tg\cok(\lo j_{2})_{*}.$
Combining it with Lemma~\ref{lof2} we infer that \begin{equation}\label{gycok}\mathrm{Im} (\pa_{2})\tg\cok(\lo j_{2})_{*}=\mathrm{Span}_{\z/p}\n [[x,y],y]+\mathrm{Im} (\lo j_{2})_{*}\nn\tg\z/p.
\end{equation}
Recall that $[\s^{2n+k+1}C_{f}, \lo\fs]\tg[\s^{2n+k+1}C_{f}, \lo F]$ induced by the inclusion; and note that  $i_{2}\colon  \s^{2n+k+1}C_{f}\rightarrow\lo\fs$ denotes the restriction of $\s^{2n+k+1}C_{f}\stackrel{E}\longrightarrow\lo \mathrm{SQ}^{\max}_{3} (C_{f})\hookrightarrow\lo F$. Then, $$i_{2*} ([[x,y],y])=[[x,y],y]\in H_{3n+2k+2} (\lo\fs) $$ follows from Lemma~\ref{spn1}. In addition, Equation (\ref{gycok}) implies that $$0\neq\pa_{2}[[x,y],y]\in H_{3n+2k+1} (\lo B).$$ Therefore, $\pa_{2}\hc i_{2*}\neq0$ which results in $\rho_{*}\neq0$. For the space $A$ and the space $\lo B$, the inclusions of the bottom cells  both induce isomorphisms between the $\z/p$-homology groups of dimension $3n+2k+1$. Thus, $\rho_{*}\neq0$ implies $H_{3n+2k+1} (\ck)=\ck_{*}\neq0$,  completing the proof.\end{proof}

 After replacing $\lt$ by $\ell\lt$, ($\ell$:   a unit in $\z_{(p)}$),
we have a commutative diagram
\begin{equation}\label{zjt}\begin{tikzcd}
S^{3n+2k+1}\;\; \arrow[r,"\s^{2n+k+1}f"] \arrow[d, equal] & \;\;S^{3n+k+1}\arrow[r,"i_{1}"] \arrow[d,"\phi"] & \s^{2n+k+1}C_{f} \arrow[d,"i_{2}"] \\
S^{3n+2k+1}\; \arrow[r,"\lt"]            & \lo\fe \arrow[r,"\lo j_{2}"]            & \lo\fs.
\end{tikzcd}
\end{equation}
\noindent Equation (\ref{f3dy}) results in a cofiber sequence $S^{3n+2k+2}\stackrel{\beta}\longrightarrow\fe\stackrel{j_{2}}\longrightarrow\fs.$   Then $\beta=i_{5}\hc i_{3}$ up to a unit in $\z_{(p)}$ by  Lemma~\ref{zzd}. 

Now, without changing the homotopy type of $\fs$, replace $\beta$ by $\ell\beta$ for some unit $\ell$ in $\z_{(p)}$ such that $\beta=i_{5}\hc i_{3}$ strictly.

Denote  the isomorphism $[\s X,Y]\stackrel{\tg}\longrightarrow [X,\lo Y]$ by $\mathrm{adj}$ for any spaces $X,Y$.

We shall show that $\mathrm{adj} (\beta)=\lt$ up to a unit in $\z_{(p)}$.
Applying the functor $\lo(-)$ to $\beta=i_{5}\hc i_{3}$, we have $\lo\beta=\lo i_{5}\hc \lo i_{3}$. Then, we  obtain a  diagram  whose second row is a homotopy fibration, 
\begin{equation}\label{nbdt}
 \begin{tikzcd}
\lo S^{3n+k+2}\arrow[dr,"\lo\beta"]  \arrow[d,"\lo i_{3}"'] & S^{3n+k+1}  \arrow[d,"\mathrm{adj} (\beta)"] \arrow[l,"E"']& \\
\lo B \arrow[r,"\lo i_{5}"]  &\lo\fe \arrow[r,"\lo j_{2}"]  & \lo\fs.  \\
S^{3n+k+1}  \arrow[u,"i_{4}"]   \arrow[ur,"\lt"']                  &                    & 
\end{tikzcd}
\end{equation}

Next we provide the following lemma.
\begin{lem}\label{gylt}$\mathrm{adj} (\beta)=\lt$ up to a unit in $\z_{(p)}$, where $\mathrm{adj} (\beta)$ is the adjoint map of $\beta.$
\end{lem}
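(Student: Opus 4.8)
The plan is to read the identity $\beta'=\lt$ (up to a unit) directly off the last displayed diagram by composing the triangles that are already known to commute, the only genuinely new input being a comparison of the two bottom-cell inclusions $\lo i_{3}\hc E$ and $i_{4}$ of $\lo B$. First I would record what the diagram supplies. The top triangle commutes because $\beta'$ is by definition the adjoint of $\beta$, so $\beta'=\lo\beta\hc E$; the left triangle commutes because $\beta=i_{5}\hc i_{3}$ holds strictly after the unit adjustment, whence $\lo\beta=\lo i_{5}\hc\lo i_{3}$; and the bottom triangle commutes up to an invertible coefficient in $\z_{(p)}$ by the definition of $\lt$, so that $\lt=\lo i_{5}\hc i_{4}$ up to a unit. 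Combining the first two gives
\[
\beta'=\lo\beta\hc E=\lo i_{5}\hc\lo i_{3}\hc E .
\]
Thus the lemma reduces to the single claim that $\lo i_{3}\hc E=i_{4}$ up to an invertible coefficient in $\z_{(p)}$, for then $\beta'=\lo i_{5}\hc i_{4}=\lt$ up to a unit, as desired.

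Next I would establish this remaining claim, which is the heart of the matter. By Lem.~\ref{xr} the bottom cell of $B$ lies in dimension $3n+2k+2$, so $B$ is $(3n+2k+1)$-connected and $H_{3n+2k+2}(B)\tg\z_{(p)}$ is generated by that cell; consequently $\lo B$ is $(3n+2k)$-connected and the Hurewicz theorem yields isomorphisms $\pi_{3n+2k+1}(\lo B)\tg H_{3n+2k+1}(\lo B)\tg\z_{(p)}$. Both $\lo i_{3}\hc E$ and $i_{4}$ are elements of this infinite cyclic group, so it suffices to show that each induces an isomorphism (up to a unit) on $H_{3n+2k+1}(-)$. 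The map $i_{4}$ is by definition the inclusion of the bottom cell $S^{3n+2k+1}=\sk_{3n+2k+1}(\lo B)$ of Lem.~\ref{xr}, hence a generator. For $\lo i_{3}\hc E$ I would argue through the homology suspension $\cg'$: the map $E_{*}$ carries the fundamental class of $S^{3n+2k+1}$ isomorphically onto the bottom class of $\lo S^{3n+2k+2}$, and since $i_{3}$ restricts to the bottom-cell isomorphism on $H_{3n+2k+2}$, naturality of $\cg'$ forces $(\lo i_{3})_{*}$ to send the bottom class of $\lo S^{3n+2k+2}$ to a generator of $H_{3n+2k+1}(\lo B)$. Hence $\lo i_{3}\hc E$ is also a generator, and the two maps agree up to an invertible coefficient in $\z_{(p)}$.

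Equivalently, and perhaps more transparently, under the adjunction isomorphism $\pi_{3n+2k+1}(\lo B)\tg\pi_{3n+2k+2}(B)$ the class $\lo i_{3}\hc E$ corresponds exactly to $i_{3}$, the generator arising from the bottom cell, while $i_{4}$ corresponds to the Hurewicz generator of $\pi_{3n+2k+2}(B)\tg\z_{(p)}$; being two generators of the same group, they coincide up to a unit. I expect the main obstacle to be precisely this step: the diagram does not itself assert $\lo i_{3}\hc E=i_{4}$, and making the identification rigorous requires the connectivity count for $\lo B$ together with the naturality of the homology suspension (or, in the alternative formulation, the adjunction) to pin down both maps as generators of the same $\z_{(p)}$. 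Once this is in hand, the chain of equalities above closes, giving $\beta'=\lt$ up to an invertible coefficient in $\z_{(p)}$.
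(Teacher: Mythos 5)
Your proposal is correct and takes essentially the same route as the paper: both reduce the lemma, via the three commuting triangles of the diagram, to the single claim that $\lo i_{3}\hc E=i_{4}$ up to a unit in $\z_{(p)}$, and both establish that claim by a connectivity/bottom-cell argument showing the two maps represent generators of the same copy of $\z_{(p)}$ (the paper factors $\lo i_{3}\hc E$ through $i_{4}$ by cellular approximation and checks the degree of the resulting self-map is a unit, which is the same computation as your Hurewicz/homology-suspension argument).
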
\begin{proof} In this proof, we freely use the commutativity of Diagram (\ref{nbdt}). Clearly, the map $\lo i_{3}\hc E \colon  S^{3n+2k+1}\rightarrow\lo B$ can be  decomposed to be $$S^{3n+k+1}\stackrel{\mu}\longrightarrow S^{3n+k+1}\stackrel{i_{4}}\longrightarrow \lo B$$ for some self-map $\mu.$ Recall that $i_{3}\colon S^{3n+k+2}\rightarrow B$ is the inclusion of the bottom cell. Then,   $H_{3n+k+1} (\mu)=\mu_{*}\neq0$ which results in $\lo i_{3}\hc E=i_{4}$ up to a unit in $\z_{(p)}$. Therefore, $\mathrm{adj} (\beta)=\lo i_{5}\hc\lo i_{3}\hc E=\lo i_{5}\hc i_{4}=\lt$  up to a unit  in $\z_{(p)}$.\end{proof}

We examine the$\m p$ spherical elements and give the following lemma, which is critical to the proof of our main theorem. 

\begin{lem} \label{gyf}    The adjoint map of the inclusion $j_{4}\colon S^{3n+k+2}\hookrightarrow G=(S^{n+1}\vee S^{3n+k+2})\cup e^{5n+2k+3}\cup\cdots$ is the inclusion $$S^{3n+k+1}\hookrightarrow S^{n}\vee S^{3n+k+1}\hookrightarrow \lo(S^{n+1}\vee S^{3n+k+2})\hookrightarrow\lo((S^{n+1}\vee S^{3n+k+2})\cup e^{5n+2k+3}\cup\cdots)=\lo G.$$
Moreover, up to a unit in $\z/p$, we have
$\phi_{*}\e=(\lo j_{3}\hc \mathrm{adj} (j_{4}))_{*} (\e)=[[x,y],x].$
\end{lem}

\begin{proof} The first part is obvious. We consider the second part. Lemma \ref{lof2} gives that
\[H_{*} (\lo\fe)\tg T(x, [x,y]) \;\;\text{as Hopf algebras}.\]

\noindent  Recall the homotopy fibration \[ \lo G\stackrel{\lo j_{3}\;}\longrightarrow \lo\fe\rightarrow\lo S^{2n+k+2}. \]
\noindent Lemma~\ref{lpgtd} implies that the Hopf algebra \[H_{*} (\lo G)= T(\n\mathrm{ad}^{m} (z)(x)\;|\; m\geq0\nn)\]
\noindent
where $|x|=n $,  $|z|=2n+k+1$ and $z=[x,y]\in H_{*} (\lo G)\xyd T(x,y)$; moreover, $$(\lo j_{3})_{*} ([[x,y],x])=[[x,y],x]\in H_{*} (\lo \fe).$$ \noindent Using the notation of the generator $\e\in H_{3n+k+1} (S^{3n+k+1})$ given in Lemma \ref{cfjht}, and noticing that the homology group $H_{3n+k+1} (\lo G)=\x\,[[x,y],x]\,\xx\tg\z/p$,  we have \[(\mathrm{adj} (j_{4}))_{*} (\e)=\lambda[[x,y],x]\in H_{*} (\lo G) \;\;\text{for some integer}\; \lambda\in [1,p-1].\] 
Therefore, $(\lo j_{3}\hc \mathrm{adj} (j_{4}))_{*} (\e)=\lambda[[x,y],x]\in H_{*} (\lo \fe).$ By Lemma \ref{cfjht}, we have \[\phi_{*}\e= [[x,y],x]\in H_{3n+k+1} (\lo\fe)\;\;\text{where}\;\; \e\in H_{3n+k+1} (S^{3n+k+1}).\] 
\noindent Consequently, up to the integer $\lambda\in[1,p-1],$ we have $\phi_{*}\e=(\lo j_{3}\hc \mathrm{adj} (j_{4}))_{*} (\e)=[[x,y],x]$.\end{proof}

\subsection{The homotopy group $\pi_{3n+k+1} (\lo\fe)$ and maps lying in it}
\;\\ \indent Denote by $q_{2}$ the pinch map $\fe=S^{n+1}\cup e^{2n+k+2}\rightarrow S^{2n+k+2}.$ We examine $\pi_{3n+k+1} (\lo\fe)$ to study $\kj$.

\begin{lem}\label{410} Let $\mathrm{d}=\mathrm{rank} (\z/p\otimes \mathrm{Im} (q_{2*}))$, the number of  the direct summands of $$\mathrm{Im} (q_{2*})=\mathrm{Im}\big(q_{2*}\colon\pi_{3n+k+2} (\fe)\rightarrow \pi_{3n+k+2} (S^{2n+k+2})= \s\pi_{3n+k+1} (S^{2n+k+1}) \big),$$ and  let  $\n\s\mathbbm{y}^{(i)} \nn_{ 1\leq i\leq \mathrm{d}} $ be  an  arbitrary minimal generating set of $\mathrm{Im} (q_{2*})$. Take  $$[\s\mathbbm{y}^{(i)}]\in\n j_{2.5},\;[\e_{n+1},\s f],\;\mathbbm{y}^{(i)} \nn\xyd \mathrm{Coext}_{[\e_{n+1},\s f]} (\s\mathbbm{y}^{(i)})\xyd \pi_{3n+k+2} (\fe).$$ The  element $[\s\mathbbm{y}^{(i)}]$ is in fact both a lift of $-\s\mathbbm{y}^{(i)}$ and  a coextension of $\mathbbm{y}^{(i)}$ for each $i$ if $\mathrm{d}\geq1$. Then, 
    $\pi_{3n+k+1} (\lo\fe)$ is generated by $$\text{elements lying in}\;(\lo j_{2.5})\hc \pi_{3n+k+1} (\lo S^{n+1}),\;\;\text{the element}\;\mathrm{adj} (j_{3}j_{4})=\lo j_{3}\hc \mathrm{adj} (j_{4})\colon S^{3n+k+1}\rightarrow\lo\fe$$  $$\text{and the elements}\;\;\mathrm{adj} ([\s\mathbbm{y}^{(i)}]),\;(1\leq i\leq \mathrm{d}).$$ 
    
\noindent If \(\mathrm{d}=0\), so that the symbol
$
\n\s\mathbbm{y}^{(i)} \nn_{ 1\leq i\leq \mathrm{d}}$
does not occur, we regard the $[\s\mathbbm{y}^{(i)}], (1\le i\le \mathrm{d}) $
part together with the
$
\mathrm{adj} ([\s\mathbbm{y}^{(i)}]),\, (1\le i\le \mathrm{d})$
part as \(0\).
    \end{lem}

\begin{proof}  First consider the case $\mathrm{d}\geq1$. We will freely use some classical methods for calculating homotopy groups, see Remark \ref{jssq} for these methods. The homotopy fibering $G=(S^{n+1}\vee S^{3n+k+2})\cup e^{5n+2k+3}\cup\cdots \stackrel{j_{3}}\longrightarrow\fe\stackrel{q_{2}}\longrightarrow S^{2n+k+2}$ induces an exact sequence \begin{equation}\notag
    \pi_{3n+k+2} (S^{n+1}\vee S^{3n+k+2}) \stackrel{}\longrightarrow\pi_{3n+k+2} (\fe)\stackrel{q_{2*}}\longrightarrow \pi_{3n+k+2} (S^{2n+k+2})\stackrel{\pa}\rightarrow \pi_{3n+k+1} (S^{n+1}),
\end{equation}

\noindent which is canonically identified with the exact sequence
$$\pi_{3n+k+1} (\lo (S^{n+1}\vee S^{3n+k+2})) \stackrel{}\longrightarrow\pi_{3n+k+1} (\lo\fe)\stackrel{(\lo q_{2})_{*}}\longrightarrow \pi_{3n+k+1} (\lo S^{2n+k+2})\stackrel{\pa}\rightarrow \pi_{3n+k} (\lo S^{n+1}).$$
\noindent Obviously, $\pi_{3n+k+2} (S^{2n+k+2})=\s \pi_{3n+k+1} (S^{2n+k+1})$ and $\pa(\s\mathbbm{x})=[\e_{n+1},\s f]\hc\mathbbm{x}$  for each $\s\mathbbm{x}\in\pi_{3n+k+2} (S^{2n+k+2}).$ Let the elements $\s\mathbbm{y}^{(i)},\;(1\leq i\leq \mathrm{d})$ denote   generators of a minimal generating set of $$\mathrm{Im} (q_{2*})=\mathrm{Im} (q_{2*}\colon\pi_{3n+k+2} (\fe)\rightarrow \pi_{3n+k+2} (S^{2n+k+2}) )=\mathrm{Ker} (\pa)$$ where $\mathrm{d}=\mathrm{rank} (\z/p\otimes \mathrm{Im} (q_{2*}))\geq1$ by the assumption. Therefore,  the taken element $$[\s\mathbbm{y}^{(i)}]\in\n j_{2.5},\;[\e_{n+1},\s f],\;\mathbbm{y}^{(i)} \nn\xyd \mathrm{Coext}_{[\e_{n+1},\s f]} (\s\mathbbm{y}^{(i)})$$ is a lift of $-\s\mathbbm{y}^{(i)}$  as well as a coextension of $\mathbbm{y}^{(i)}$ for each $i$. So,  $\pi_{3n+k+2} (\fe)$ is generated by $$\text{elements lying in}\;j_{2.5}\hc \pi_{3n+k+2} (S^{n+1}),\;\;\text{the element}\;j_{3}j_{4}\colon S^{3n+k+2}\hookrightarrow S^{n+1}\vee S^{3n+k+2}\rightarrow \fe$$ \noindent and the elements $[\s\mathbbm{y}^{(i)}],\;(1\leq i\leq \mathrm{d})$.
Equivalently speaking, $\pi_{3n+k+1} (\lo\fe)$ is generated by $$\text{elements lying in}\;(\lo j_{2.5})\hc \pi_{3n+k+1} (\lo S^{n+1}),\;\;\text{the element}\;\mathrm{adj} (j_{3}j_{4})=\lo j_{3}\hc \mathrm{adj} (j_{4})\colon S^{3n+k+1}\rightarrow\lo\fe$$ \noindent and the elements $\mathrm{adj} ([\s\mathbbm{y}^{(i)}]),\;(1\leq i\leq \mathrm{d})$. Hence, the result holds when $\mathrm{d}\geq1$. It is easy to see that the result   also holds when $\mathrm{d}=0$. \end{proof}

\;\\ \indent The following two lemmas are used to study the homology behavior of maps lying in $ \pi_{3n+k+1} (\lo\fe)$.

\begin{lem}\label{gyts} For the space $\fe=S^{n+1}\cup _{[\e_{n+1},\s f]}e^{2n+k+2}$, the graded group $H_{*} (\lo \fe;\z_{(p)})$ is $\z_{(p)}$-torsion free when $k\geq1$ or $n$ is even.\end{lem}

\begin{proof} If $k\geq1$ or $n+1$ is odd, then    $H^{*} (\fe;\z_{(p)})$ is a trivial algebra.  Consider the $\z_{(p)}$-cohomology Leray-Serre spectral sequence  $\n E_{\bullet}^{\bullet,\bullet}, d^{\bullet}\nn$ associated with the homotopy fibration $\lo \fe\rightarrow *\rightarrow \fe$. The Leibniz rule and the trivial algebra structure of $H^{*} (\fe;\z_{(p)})$  imply that $d^{n+1} (u)=0$ for all  $u\in E_{i}^{n+1,j}$ with $i\geq2$ and $j\geq 1$.

Next we show that
 the graded group $H^{*} (\lo \fe;\z_{(p)})$ is $\z_{(p)}$-torsion  free.  To obtain a contradiction, assume that there exists a nonzero $\z_{(p)}$-torsion element $a\in H^{*} (\lo\fe;\z_{(p)})$ with $|a|\geq1$. Then, we can find a nonzero $\z_{(p)}$-torsion element $b\in H^{*} (\lo\fe;\z_{(p)})$ with $|b|\geq1$ such that $|b|\leq |a|$ for all nonzero $\z_{(p)}$-torsion elements $a\in H^{*} (\lo\fe;\z_{(p)})$. It follows that all elements $v\in E_{r}^{i,j}$ with $j<|b|$,\;$i\geq1$ and $r\geq2$  are $\z_{(p)}$-torsion free. Then in each page, $d(b)=0$ and so $b\neq0$ survives to $E_{\wq}^{|b|,0}$. However, all elements in $E_{\wq}^{(\bullet,\bullet)\neq0}$ are trivial, giving a contradiction. Thus, the graded group $H^{*} (\lo \fe;\z_{(p)})$ is $\z_{(p)}$-torsion  free and hence the graded group $H_{*} (\lo \fe;\z_{(p)})$ is $\z_{(p)}$-torsion free.\end{proof}

\;\\ \indent In the above lemma, the condition `` $k\ge1$ or $n$ is even'' can not be removed. Taking $k=0$, $n=5$ and setting $\s f=p^{a}\e_{6}$ with $a\in\z_{+}$, we have $\fe=S^{6}\cup _{[\e_{6},\; p^{a}\e_{6}]}e^{12}$. The Hopf invariant 
of $[\e_{6},\,p^{a}\e_{6}]$ is $2p^{a}.$
Then $u^{2}=(2p^{a})v$, where $u\in H^{6} ( \fe;\z_{(p)})$ and  $v\in H^{12} ( \fe;\z_{(p)})$  are suitable generators.
By using the cohomology Leray-Serre spectral sequence, one may obtain that the group $H^{11} (\lo\fe;\z_{(p)})=\z_{(p)}/(2p^{a}\z_{(p)})$ and hence $H_{10} (\lo\fe;\z_{(p)})$ has a $\z_{(p)}/(2p^{a}\z_{(p)})$-summand.

\begin{lem}\label{tdxwsy} Recall   that $\e\in H_{3n+k+1} (S^{3n+k+1})\tg\z/p$ denotes the generator. If $k\geq1$ or $n$ is even, then, at the$\m p$-homology level for the maps $S^{3n+k+1}\rightarrow \lo\fe$,  each element lying in  $(\lo j_{2.5})\hc \pi_{3n+k+1} (\lo S^{n+1})$ sends $\e$ to 0,  $\mathrm{adj} (j_{3}j_{4})$ sends $\e$ to $[[x,y],x]$ up a unit in $\z/p,$ and 
   each   $\mathrm{adj} ([\s\mathbbm{y}^{(i)}])$ sends $\e$ to 0.\end{lem}
\begin{proof}  It was proved that the map $\mathrm{adj} (j_{3}j_{4})$ sends $\e$ to $[[x,y],x]$ up a unit in $\z/p$ in Lemma \ref{gyf}. We examine the  behavior of the maps at the $\mathbb{Q}$-homology level.
Let $\mathcal{L} (-)$ be the rationalization functor,  sending $p$-local nilpotent CW complexes to their rationalizations, and sending  $p$-local maps to their rationalizations. Then $\mathcal{L} (\pi_{3n+k+1} (\lo S^{n+1}))=0$ and hence $\mathcal{L} ((\lo j_{2.5})\hc\pi_{3n+k+1} (\lo S^{n+1}))=0$. The elements $\mathbbm{y}_{i}$ are in the stable homotopy group, so all  $\mathcal{L} (\mathbbm{y}_{i})=0$.  Thus, 
\begin{align}
\mathcal{L} ([\s\mathbbm{y}^{(i)}]) 
 &\xyd \mathcal{L}\n j_{2.5},\;[\e_{n+1},\s f],\;\mathbbm{y}^{(i)} \nn\label{l1} \\
 &\xyd \n\mathcal{L} j_{2.5},\;\mathcal{L}[\e_{n+1},\s f],\;\mathcal{L}\mathbbm{y}^{(i)}) \nn\label{l2} \\
 &=\n\mathcal{L} j_{2.5},\;\mathcal{L}[\e_{n+1},\s f],\;0 \nn  \notag \\
 &= \mathcal{L} j_{2.5}\hc \mathcal{L} \pi_{3n+k+2} ( S^{n+1})\notag \\
 &= 0.\notag
\end{align}

\noindent Here, the relation ``\,$\xyd$\,'' between Equation (\ref{l1}) and Equation (\ref{l2}) follows from the equivalent definition of Toda brackets (see Toda \cite[Proposition 1.7, p.~13]{Toda}) and the fact that  $\mathcal{L}$ localizes extensions and coextensions.
\noindent Thus, $\mathcal{L} (\mathrm{adj} ([\s\mathbbm{y}^{(i)}]))=\mathrm{adj} (\mathcal{L} ([\s\mathbbm{y}^{(i)}]))=0.$ Then,  all elements in $(\lo j_{2.5})\hc\pi_{3n+k+1} (\lo S^{n+1})$   and 
   all   $\mathrm{adj} ([\s\mathbbm{y}^{(i)}])$  induce trivial homomorphisms  of $\mathbb{Q}$-homology groups. Lemma \ref{gyts} tells us that  $H_{*} (\lo \fe;\z_{(p)})$ is $\z_{(p)}$-torsion free.
Therefore, all elements in $(\lo j_{2.5})\hc\pi_{3n+k+1} (\lo S^{n+1})$   and 
   all   $\mathrm{adj} ([\s\mathbbm{y}^{(i)}])$  induce trivial homomorphisms  of $\z/p$-homology groups. Hence the result holds.\end{proof}
\;\\ \indent  For the restriction map $\kj\in \pi_{3n+k+1} (\lo\fe)$, we examine its  adjoint map  $\mathrm{adj}^{-1} (\kj)\in \pi_{3n+k+2} (\fe)$.

\begin{lem}\label{zxkya}  If $k\geq1$ or $n$ is even, then for some unit $r\in\z_{(p)}$, \begin{equation}\notag\label{kjdty}
    r\mathrm{adj}^{-1} (\kj)\ty j_{3}j_{4}\m j_{2.5}\hc \pi_{3n+k+2} (S^{n+1}),\;[\s\mathbbm{y}^{(i)}], (1\leq i\leq \mathrm{d})
\end{equation}
 If $k=0$ and $n$ is odd, then for some unit $r\in\z_{(p)}$, \begin{equation}\notag\label{kjdty}
    r\mathrm{adj}^{-1} (\kj)\ty j_{3}j_{4}\m j_{2.5}\hc \pi_{3n+k+2} (S^{n+1}),\;[\s\mathbbm{y}^{(i)}]', (1\leq i\leq \mathrm{d}).
\end{equation}
\noindent Here, $$[\s\mathbbm{y}^{(i)}]'\ty [\s\mathbbm{y}^{(i)}]\m j_{3}j_{4}$$\noindent for each $1\leq i\leq \mathrm{d}.$  If 
$\mathrm{d}=\mathrm{rank}\big((\z/p\otimes \mathrm{Im} (q_{2*}\colon\pi_{3n+k+2} (\fe)\rightarrow \pi_{3n+k+2} (S^{2n+k+2}) \big)=0,$  then the corresponding part involving $[\s\mathbbm{y}^{(i)}],\, (1\leq i\leq \mathrm{d})$ or  $[\s\mathbbm{y}^{(i)}]',\, (1\leq i\leq \mathrm{d})$ is understood to be $0$. To simplify the notation, take a $\z_{(p)}$-submodule $I\xyd \pi_{3n+k+2} (\fe)$ to be $$I=j_{2.5}\hc \pi_{3n+k+2} (S^{n+1})+\x \;[\s\mathbbm{y}^{(i)}]\;|\; 1\leq i\leq \mathrm{d}\;\xx$$
\noindent
when $k\geq1$ or $n$ is even, $$I=j_{2.5}\hc \pi_{3n+k+2} (S^{n+1})+\x \;[\s\mathbbm{y}^{(i)}]'\;|\; 1\leq i\leq \mathrm{d}\;\xx$$ \noindent when  $k=0$ and $n$ is odd. \end{lem}
\begin{proof} We prove the lemma by the method of undetermined coefficients. Suppose that $$\kj=a+t\mathrm{adj} (j_{3}j_{4})+\s_{i=1}^{\mathrm{d}}\lambda_{i} \mathrm{adj}[\s\mathbbm{y}^{(i)}]$$ \noindent where $a\in (\lo j_{2.5})\hc \pi_{3n+k+1} (\lo S^{n+1})$ and $t,\lambda_{i}\in\z_{(p)}.$

Consider the case  that $k\geq1$ or $n$ is even.  Recall that the fact that $\kj_{*} (\e)=[[x,y],x]]$ up to a unit in $\z_{(p)}$; see Lemma \ref{gyf}.  This fact and Lemma \ref{tdxwsy}   imply that $t\in\z_{(p)}$ is a unit. Take $r=1/t$.

Consider  the case that $k=0$ and $n$ is odd. Recall $n\geq2$ by our global assumption. Hence $n\geq3$ in this case. We know $|[[x,y],x]|=3n+k+1=3n+1$. In $H_{3n+1} (\lo\fe)$, all  primitive elements are in $\mathrm{Span}_{\z/p}\n[[x,y],x]\nn$; see Selick's textbook \cite[Theorem 10.5.2, p.~109]{selickjc} or calculate them directly. So, $(\mathrm{adj}[\s\mathbbm{y}^{(i)}])_{*} (\e)\in \mathrm{Span}_{\z/p}\n[[x,y],x]\nn.$ It follows that 
we may choose elements $[\s\mathbbm{y}^{(i)}]'$ satisfying the homomorphisms of the$\m p$  homology groups  $$(\mathrm{adj}[\s\mathbbm{y}^{(i)}]')_{*}=0 \;\; \text{and}\;\; [\s\mathbbm{y}^{(i)}]'\ty [\s\mathbbm{y}^{(i)}]\m j_{3}j_{4}.$$ Note that the generators $\mathrm{adj} ([\s\mathbbm{y}^{(i)}])$ of $\pi_{3n+k+1} (\lo\fe)$ may be replaced by $\mathrm{adj} ([\s\mathbbm{y}^{(i)}]')$; also note that any element lying in\;$(\lo j_{2.5})\hc \pi_{3n+1} (\lo S^{n+1})$ obviously induces trivial homomorphisms of the$\m p$ homology groups. Then, we reset $$\kj=a+t\mathrm{adj} (j_{3}j_{4})+\s_{i=1}^{\mathrm{d}}\lambda_{i}' \mathrm{adj}[\s\mathbbm{y}^{(i)}]'$$ \noindent where $a\in (\lo j_{2.5})\hc \pi_{3n+k+1} (\lo S^{n+1})$ and $t,\lambda_{i}'\in\z_{(p)}.$ Thus $t\in\z_{(p)}$ is a unit and we choose $r=1/t$. \end{proof}

\subsection{The determination of the map $\beta$}

Note that without changing the homotopy type of $\fs$ and $F$,  we can replace the attaching map $\beta$   by $u\beta$ for any unit $u\in\z_{(p)}$.

Having completed all preliminary works, we are now in a position to demonstrate the main result of this paper.

Although the $p$-localization condition and the trivial-algebra condition are assumed throughout this section, we restate  them in the following theorem for completeness and ease of application.

\begin{theorem}\label{zdlzm}Localize spaces at a prime $p$ such that either $p=2$ with no additional assumption on $n+k$, or  $p\geq5$ and  $n+k$  is additionally assumed to be odd.    Assume that the mod $p$ cohomology  algebra $H^{*} (C_{f})$ is a trivial algebra. Then, the homotopy fiber $F$  of  the pinch map  $ *\not\simeq\s C_{f}=S^{n+1}\cup _{\s f} e^{n+k+2} \xrightarrow[]{pinch} S^{n+k+2}$ ($n\geq2,\;k\geq0$) is given by \[F=\fe\cup_{\beta}  e^{3n+2k+3}\cup e^{4n+3k+4}  \cdots,\;\;\big(\beta\in\pi_{3n+2k+2} (\fe)\big).\]  
\noindent
For its attaching map $\beta$,  the following relations hold: \\ if $k\geq1$ or $n$ is even, then $$\beta\ty j_{3}  j_{4}\hc \s^{2n+k+2}f\m j_{2.5}\hc \pi_{3n+k+2} (S^{n+1})\hc \s^{2n+k+2}f,\;[\s\mathbbm{y}^{(i)}] \hc\s^{2n+k+2}f, (1\leq i\leq \mathrm{d});$$
\noindent if $k=0$ and $n$ is odd, then
$$\beta\ty j_{3}  j_{4}\hc \s^{2n+k+2}f\m j_{2.5}\hc \pi_{3n+k+2} (S^{n+1})\hc \s^{2n+k+2}f,\;[\s\mathbbm{y}^{(i)}]' \hc\s^{2n+k+2}f, (1\leq i\leq \mathrm{d}).$$\noindent That is, $$\beta\ty j_{3}  j_{4}\hc \s^{2n+k+2}f\m I\hc \s^{2n+k+2}f.$$ \noindent Here,   the principal term $j_{3}  j_{4}\hc \s^{2n+k+2}f$   is  the following composition,   \vspace{-0.85\baselineskip}\;\\\;\[\begin{tikzcd}[column sep=0.15cm]
		  S^{3n+2k+2}\xrightarrow[]{\s ^{2n+k+2}f} S^{3n+k+2}
		\arrow[r, hook,"j_{4}"]
		& \mathrm{hofib} (F^{(2)} \xrightarrow[]{pinch} S^{2n+k+2})
		\arrow[r, "j_{3}"," \shortstack{\scriptsize homotopy fiber\\\scriptsize inclusion}"'] \arrow[d, equal]  & F^{(2)}\arrow[d, equal]
		\\ & (S^{n+1}\vee S^{3n+k+2})\cup e^{5n+2k+3}\cup\cdots& S^{n+1}\cup_{[\mathrm{id},\s f]}  e^{2n+k+2}.
	\end{tikzcd}\] \noindent  The precise description of the maps $[\s\mathbbm{y}^{(i)}] \hc\s^{2n+k+2}f, (1\leq i\leq \mathrm{d})$ and 
    $[\s\mathbbm{y}^{(i)}]' \hc\s^{2n+k+2}f, (1\leq i\leq \mathrm{d})$  are given in Lemma \ref{410} and Lemma~\ref{zxkya}.
\end{theorem}
\begin{proof} In this proof,  the symbol ``=" stands for equality up to a unit in $\z_{(p)}$. By the left square of Diagram~(\ref{zjt}) and Lemma~\ref{gylt}, we have  $\mathrm{adj} (\beta)=\lt=\phi\hc \s^{2n+k+1}f.$  
 Making use of Lemma~\ref{zxkya} and  applying $\mathrm{adj}^{-1} (-)$, we derive  the result.\end{proof}\;\\ \indent  It is known that the attaching map $\beta$ is in fact a higher Whitehead product of the form $[-,-,-]$ (not the iterated Whitehead product $[[-,-],-]$), due to  Zhu-Jin \cite{zzj}. However, higher Whitehead products are, in general,  intricate and challenging to manage, even for those of elements of homotopy groups of spheres \cite{zzj}. A comprehensive review of the definition and properties of higher Whitehead products is given by Zhu-Jin in \cite[Section~2]{zzj}.

For explicit calculations of the  homotopy groups, the following results are  useful.

\begin{lem}\label{asd1}  The  homomorphism 
$(j_{3}j_{4})_{*}\colon  \pi_{r} (S^{3n+k+2})\rightarrow\pi_{r} (\fe)$ is injective for each $r\leq4n+2k+1$.
\end{lem}
\begin{proof}  Recall that $$G=(S^{n+1}\vee S^{3n+k+2})\cup e^{5n+2k+3}\cup\cdots.$$ \noindent Let $j_{3.5}\colon  S^{n+1}\rightarrow G$ be the inclusion and $\pa\colon   \pi_{\bullet+1} (S^{2n+k+2})\rightarrow\pi_{\bullet} (G)$ be the connecting homomorphism with respect to the homotopy fibration $G\rightarrow \fe\rightarrow S^{2n+k+2}$. It is clear that $\pi_{r+1} (S^{2n+k+2})$ is in the stable range as $r\leq4n+2k+1$. Noticing the well-known formula $\pa(\mathbbm{x}\hc\s\mathbbm{y})=\pa(\mathbbm{x})\hc\mathbbm{y}$ we deduce that $$ \pa\pi_{r} (S^{2n+k+2})=j_{3.5}\hc[\mathrm{id}_{S^{n+1}},\s f]\hc\pi_{r-1} (S^{2n+k+1})\xyd\mathrm{Im}j_{3.5*}.$$
 The sum of $\mathrm{Im}j_{3.5*}$ and $\mathrm{Im}j_{4*}$ is a direct sum in $\pi_{r} (G)$, so $\mathrm{Im}j_{4*}\cap\mathrm{Im}\pa=0$. Therefore the result holds.\end{proof}

\;\\\indent Combining with this lemma, we now   comment on the indeterminacy term for the  Theorem \ref{zdlzm}.
\begin{rem}\label{midrem} The principal term
$j_{3}j_{4}\hc\Sigma^{2n+k+2}f$
and the indeterminacy term come from rather different geometric sources.   The former comes from the second sphere of 
$G=\mathrm{hofib} (F^{(2)}\to S^{2n+k+2})$,
whereas the latter comes from the bottom sphere of
$G$
together with the lifts in $\pi_{*} (F^{(2)})$. Moreover, after passing to adjoint maps on loop spaces, $j_{3}j_{4}$ induces an isomorphism on mod $p$ homology, whereas every map in $I$ induces the zero homomorphism on mod $p$ homology. Hence, the error term $I$  can not absorb the main term $j_{3}j_{4}$, that is, $j_{3}j_{4}\notin I$. Additionally,    for the orders we have the relation $$\ord(j_{3}j_{4}\hc\s^{2n+k+2}f)\geq \ord(\mathbbm{x})$$ for any $\mathbbm{x}\in I\hc\s^{2n+k+2}f$; notice that the   homomorphism $\pi_{3n+2k+2} (j_{3}j_{4})$ is injective (Lemma \ref{asd1}) and that $\s^{2n+k+2}f$ is of order $p^{m}$ for some integer $m\geq0$ when $k\geq1$ and of order $\wq$  or 1 when $k=0$.

In practice, the indeterminacy term  has no effect on the computability of the homotopy groups $\pi_{*} (\s C_{f})$ when $\beta$ is needed, since we may choose a different generating set for the homotopy group. The skill is similar to changing basis in a vector space. For example, let the vector space $V=\x u,v\xx\cong \mathbb{Q}^{2}$, and  suppose that $u'\ty u \m v$.
Then we  have 
$V=\x u',v\xx\cong \mathbb{Q}^{2}$.\end{rem}

We consider the homotopy fiber of the pinch map $\fs\rightarrow S^{3n+2k+3}$, that is, $J(M_{\fe},S^{3n+2k+2})$. 
\begin{cor}\label{bcd} The homotopy fiber of the pinch map $\fs\rightarrow S^{3n+2k+3}$  is  given by $$J(M_{\fe},S^{3n+2k+2})=(\fe\cup _{\gamma} e^{4n+2k+3})\cup e^{5n+3k+4}\cup\cdots.$$ 
The attaching map $\gamma$ satisfies  $\gamma\ty j_{3}\hc[j_{3.5},\,j_{4}]\hc\s^{3n+k+2}f\m[j_{3}j_{3.5},\;I\hc\s^{2n+k+2}f]$.
\end{cor}
\begin{proof} We first introduce the notation. Let $a_{1}\colon   S^{n+1}\rightarrow S^{n+1}\vee S^{3n+k+2}$ and $a_{2}\colon   S^{3n+k+2}\rightarrow S^{n+1}\vee S^{3n+k+2}$ be the inclusions. By the standard technique, we may replace $\beta\colon   S^{3n+k+2}\rightarrow\fe $  by the inclusion to the mapping cylinder, $i\colon   S^{3n+k+2}\rightarrow M_{\beta}= M_{\fe}$; let $\bar{j}_{2.5}$ be the composition $ S^{n+1} \xrightarrow[]{j_{2.5}}\fe\hookrightarrow M_{\fe}$.  Denote the relative folding map $$ M_{\fe}\vee S^{3n+k+2}\rightarrow J_{2} (M_{\fe},S^{3n+2k+2})=M_{\fe} $$ by $\nabla$, which is given by $\nabla(x,*)=\mathrm{id}_{M_{\fe}} (x)=x,\;\nabla(*,a)=i(a)=a.$

According to Gray \cite{Gray}, we have a commutative diagram whose  first row is a  homotopy cofibration,\[
\begin{tikzcd}
S^{n+1}\wedge S^{3n+2k+1} \arrow[r,"{[a_{1},a
_{2}]}"] &S^{n+1}\vee S^{3n+2k+2} \arrow[r,"u_{1}", hook] \arrow[d, hook,"\bar{j}_{2.5}\vee \mathrm{id}"'] &S^{n+1}\times  S^{3n+2k+2}\arrow[d,  hook] \\
 & M_{\fe}\vee S^{3n+2k+2} \arrow[d, "\nabla"']\arrow[r,hook] & M_{\fe}\times S^{3n+2k+2}\arrow[d, two heads] \\
& J_{2} (M_{\fe}, S^{3n+2k+2}) \arrow[r, hook] & J_{2} (M_{\fe},S^{3n+2k+2}).
\end{tikzcd}
\]
\noindent Considering the homotopy pushout of $\nabla (\bar{j}_{2.5}\vee \mathrm{id})$ and $u_{1}$, we see that the resulting space is 
the homotopy cofiber of $\nabla (\bar{j}_{2.5}\vee \mathrm{id})[a_{1},a_{2}]$ via employing the homotopy pushout of the map $[a_{1},a_{2}]$ and   $S^{n+1}\wedge S^{3n+2k+1} \rightarrow* $.  (Recall that, in a commutative $3\times 2$ diagram of spaces, if the two inner squares are homotopy pushouts, then the outer rectangle is also a homotopy pushout.)
By checking the homology, we deduce that
this resulting space is the 3-cell skeleton of the space $J_{2} (M_{\fe},S^{3n+2k+2})$ up to homotopy. 
 Then, the  attaching map $S^{4n+2k+2}\rightarrow J_{2} (M_{\fe}, S^{3n+2k+2})$ of the 3-cell skeleton of the space $J_{2} (M_{\fe},S^{3n+2k+2})$ can be taken as $$\nabla(\bar{j}_{2.5}\vee \mathrm{id})[a_{1},a_{2}]=[\nabla(\bar{j}_{2.5}\vee \mathrm{id})a_{1},\nabla(\bar{j}_{2.5}\vee \mathrm{id})a_{2}]=[\,\bar{j}_{2.5},\,i].$$
Restoring the original maps,
  we see that $\gamma$ can be taken as $$\gamma=[j_{2.5},\beta]\ty[j_{2.5},j_{3}j_{4}\hc\s^{2n+k+2}f]\m[j_{2.5},I\hc\s^{2n+k+2}f].$$
\noindent Clearly,   $j_{2.5}=j_{3}j_{3.5}$ up to a unit in $\z_{(p)}$. Hence, up to a unit in $\z_{(p)}$, the map $[j_{2.5},j_{3}j_{4}\hc\s^{2n+k+2}f]$ is equal to
\begin{align}
[j_{3}j_{3.5},j_{3}j_{4}\hc\s^{2n+k+2}f] &= j_{3}\hc [j_{3.5},\;j_{4}\hc\s^{2n+k+2}f] \notag \\
 &= j_{3}\hc [j_{3.5}\hc \s\mathrm{id}_{S^{n}},\,j_{4}\hc\s(\s^{2n+k+1}f)] \notag \\
 &= j_{3}\hc [j_{3.5},\,j_{4}]\hc \s(\mathrm{id}_{S^{n}}\wedge\s^{2n+k+1}f)) \notag \\
 &= j_{3}\hc [j_{3.5},\,j_{4}]\hc (\pm\s^{3n+k+2}f ).\notag 
\end{align} \noindent
 (Here, we freely use the well-known formula of Whitehead products given in \cite[Theorem 8.18, p.~484]{GW}.)
Without changing the homotopy type,  we can choose  $\gamma$ satisfying
$\gamma\ty j_{3}\hc[j_{3.5},\,j_{4}]\hc\s^{3n+k+2}f\m[j_{3}j_{3.5},\;I\hc\s^{2n+k+2}f].$\end{proof}
\section{An application:   Determining  $\pi_{18} (\s^{3}\cpt\colon  2)$}\label{seccp2}
In this section, we  determine the 2-primary component of $\pi_{18} (\s^{3}\cpt)$.

 Suppose that the $p$-primary ($p\neq 3$) components of all homotopy groups in the $m$-th stem
  $\n\pi_{\ell+m} (S^{\ell}\colon  p)\nn_{\ell\geq2}$ are completely known, where  $m\leq i-n-1$. Based on Gray's results (Proposition~\ref{grdl}), once the attaching map $\beta$ of the
3-cell skeleton of  $F$ is determined, the computation of
$\pi_{i} (\Sigma C_{f}\colon  p)$--provided that the fourth cell of $F$ has no effect--can be carried out using well-established methods of the Toda school.
These include analyzing composition relations among generators,
studying connecting homomorphisms, and rewriting elements as linear combinations
of the ``standard generators''. There is a wealth of paradigmatic examples in foundational works such as those of Oguchi~\cite{Og},
Mimura-Toda~\cite{20s}, Oda~\cite{Oda}, Mimura~\cite{mlq}, Kachi~\cite{Kachilwq,r1516},
and Mukai~\cite{Mukai1966,Mukai}. Once the attaching map $\beta$ has been determined, to calculate $\pi_{i} (\s C_{f}\colon  p)$ in the above range, our method requires no tools beyond  the classical works.

For this reason, we present our calculations in a streamlined manner, elaborating only upon the crucial arguments and delicate points. All composition relations among the generators we freely use  can be found in $\hat{\mathrm{O}}$guchi's table \cite[pp.~103--105]{Og}.
 We shall also freely use the notation for generators of $\pi_{\bullet} (S^{\ell})$ given in Toda's book \cite{Toda}. Note that   common  terminology for dealing with Toda brackets is introduced in \cite[Remark~2.5.2]{hp2} and a brief introduction of Toda’s naming convention for the generators of $\pi_{\bullet} (S^{\ell}\colon  2)$ is given in \cite[Section~6]{hp2}.

 We explain the philosophy behind our next calculations in the following remark.

\begin{rem0}\label{jssq}  We aim to determine $\pi_{\bullet} (C_{g})=\pi_{\bullet} (Y\cup _{g}CX)$, where $g\colon X\rightarrow Y$ is a map between simply-connected spaces.  There is a cofiber sequence $$ X\xrightarrow[]{g}Y\stackrel{j_{_{Y}}}\hookrightarrow C_{g}\stackrel{q}\twoheadrightarrow \s X. $$\noindent
 The homotopy fiber of the map $q$ is Gray's relative James construction, i.e., $F_{q}=J(M_{Y},X)$. Then we have a fiber sequence $F_{q}\stackrel{i}\rightarrow C_{g}\stackrel{q}\rightarrow \s X$, resulting in an exact sequence of  homotopy groups,
$$\pi_{\bullet+1} (\s X)\stackrel{\pa}\rightarrow \pi_{\bullet} (F_{q})\stackrel{i_{*}\;}\rightarrow\pi_{\bullet} (C_{g})\stackrel{q_{*}}\rightarrow\pi_{\bullet} (\s X)\stackrel{\pa}\rightarrow\pi_{\bullet-1} (F_{q}).$$

\noindent After calculating the $\pa$-images, we have a short exact sequence, $$0\rightarrow \mathrm{Im} (i_{*})\stackrel{\xyd\;}\rightarrow\pi_{\bullet} (C_{g})\stackrel{q_{*}}\rightarrow\mathrm{Ker} (\pa)\rightarrow0, $$
in which $\mathrm{Im} (i_{*})\tg\cok(\pa\colon\pi_{\bullet+1} (\s X)\rightarrow \pi_{\bullet} (F_{q}))$. So, the problem is reduced to calculating the $\pa$-images, and solving the group extension problem arising from   generators of $\Ker(\pa)$. \begin{itemize}
    \item [\rm (1)] To calculate the $\pa$-images, we use these formulas:   $$\pa(\s \mathbbm{x})=i_{_{Y}}\hc g\hc \mathbbm{x}, \;\;\pa( \mathbbm{x}\hc\s\mathbbm{y})=\pa( \mathbbm{x})\hc \mathbbm{y},$$  $$\pa\n \mathbbm{x}_{1}, \s^{k+1}\mathbbm{x}_{2},\s^{k+1}\mathbbm{x}_{3}\nn_{k+1}\xyd (-1)^{k+1} \n \pa\mathbbm{x}_{1}, \s^{k}\mathbbm{x}_{2},\s^{k}\mathbbm{x}_{3}\nn_{k},$$

\noindent
whenever the maps can be composed and the  Toda bracket on the left is well-defined. Here, $$i_{_{Y}}\colon Y\hookrightarrow M_{Y}\hookrightarrow J(M_{Y}, X)=F_{q}$$ \noindent denotes the canonical inclusion. Additionally, $j_{_{Y}}=i\hc i_{_{Y}}$.
\\\indent\quad During this process, we need to rewrite elements as linear combinations of Toda's generators, using the relations given in Toda's book \cite{Toda}, or using $\hat{O}$guchi's table \cite[pp.~103--105]{Og} which is more convenient.

 \item [\rm (2)]To solve the group extension problem coming from the generators of $\Ker(\pa)$, we use  the following fact. Suppose that $\s\mathbbm{x}\in \Ker(\pa)$ is a generator  of order $m<\wq$  satisfying $g\hc \mathbbm{x}=0$. (In concrete calculations, such an element $\s \mathbbm{x}$ often exists.) Then,  there exists  $$\mathrm{coext}_{g} (\mathbbm{x})\in \mathrm{Coext}_{g} (\mathbbm{x})\xyd \n j_{_{Y}},\;g,\;\mathbbm{x} \nn\; $$\noindent such that $q_{*} (\mathrm{coext}_{g} (\mathbbm{x}))=-\s\mathbbm{x}$,   
 and hence $$-m (\mathrm{coext}_{g} (\mathbbm{x}))\in j_{_{Y}}\hc\n g,\;\mathbbm{x},\; [m]\nn,\;\;\big([m]=m(\mathrm{id})\big).
 $$

 \noindent It remains to analyze the  Toda bracket $\n g,\;\mathbbm{x}, \;[m]\nn$, employing the fundamental properties of Toda brackets given in Toda's book \cite[Proposition 1.2, Proposition 1.4]{Toda}.
\end{itemize}
\end{rem0}
This remark provides the guiding idea for the calculations that follow. We therefore state it briefly, and then comment on several points that may be of interest to the reader.

The formula $\pa(\s\mathbbm{x})=i_{_{Y}}\hc g\hc\mathbbm{x}$ in assertion (1)  follows from Gray's Lemma \cite[Lemma 4.1]{Gray} in his 1973 paper. When $X$ is a sphere, this formula was known much earlier, following from  James's method of relative homotopy groups given by his 1954 paper \cite{james}.

For assertion (2),  the symbol $\mathrm{Coext}_{g} (\mathbbm{x})$ denotes the set of coextensions of $\mathbbm{x}$ with respect to $g$, see Toda \cite[p.~13]{Toda} $\bar{\text{O}}$shima \cite[Section 2]{oo} and Yang-Mukai-Wu \cite[Subsection 2.4]{hp2}. We warn that the definition of coextensions in given in \cite{hp2}  agrees with Toda's only up to sign, resulting in two kinds of Toda brackets having parallel behavior. To  agree with Toda's definition of coextensions strictly, the map $C^{f}$ in \cite[(2.4a), p.~2290]{hp2}  should be replaced by $-C^{f}$  given by $\ell\wedge t\mapsto f(\ell)\wedge (1-t).$ Nevertheless, \cite[Corollary 2.4.2]{hp2} holds for both settings; the difference does not affect the sign in \cite[Corollary 2.4.2]{hp2} because in its proof the sign-sensitive part vanishes, $\overline{p}\hc \widetilde{f}^{+}=*$. Note that $\mathrm{id}\colon C_{g}\rightarrow C_{g}$ is  an extension of $j_{_{Y}}$  with respect to $g$, leading to $\mathrm{Coext}_{g} (\mathbbm{x})\xyd \n j_{_{Y}},\;g,\;\mathbbm{x} \nn$.   This relation  is  used in Kachi~\cite{Kachilwq,r1516}, Mukai~\cite{Mukai1966,Mukai} and some other classic papers of  experts of the Toda school. If we replace the working category by the category of pointed suspension spectra,  and if $X,Y$ are $p$-local spheres and $g\neq0$, then the relation $\mathrm{Coext}_{g} (\mathbbm{x})\xyd \n j_{_{Y}},\;g,\;\mathbbm{x} \nn$ can be strengthened to  
$$\mathrm{Coext}_{g} (\mathbbm{x})= \n j_{_{Y}},\;g,\;\mathbbm{x} \nn\;\;\text{up to units in}\;\z_{(p)};$$\noindent more precisely, $\pm\mathrm{Coext}_{g} (\mathbbm{x})\xyd \n j_{_{Y}},\;g,\;\mathbbm{x} \nn$, and for each $\af\in \n j_{_{Y}},\;g,\;\mathbbm{x} \nn$, there is a unit $ \,\ell_{\af}\in\z_{(p)}$ such that the element $\ell_{\af}\af\in\mathrm{Coext}_{g} (\mathbbm{x}).$ (To prove this fundamental imply  the stable homotopy theory, check the indeterminacy of $q\hc\n j_{_{Y}},\;g,\;\mathbbm{x}\nn$ and employ $q_{*}\mathrm{Coext}_{g} (\mathbbm{x})=\pm\mathbbm{x}$.)

We make the following observation to explain why  $V_{2} (\mathbb{C}^{4})\backslash\n x_{0}\nn\simeq \s^{3}\cpt$.
\begin{rem0}\label{v42}  Without taking any localization,  the Stiefel manifold
$ V_{2} (\mathbb{C}^{4})=\textit{SU} (4)/\textit{SU} (2)$ has a CW decomposition $V_{2} (\mathbb{C}^{4})\simeq(\s^{3}\cpt)\cup e^{12}$ by checking the Steenrod square $\emph{S}q^{2}$ \cite[6.10 (4), p.~624]{mtbook}. Note that there is a  well-known algebra isomorphism   $$H^{*} ( V_{2} (\mathbb{C}^{4});\z)\tg\Lambda(a,b), \,\;(|a|=5,\; |b|=7),$$ where $\Lambda(a,b)$ is the exterior algebra; see \cite[Proposition 5.11, p.~152]{pxljc}. Take an arbitrary point $x_{0}\in V_{2} (\mathbb{C}^{4})$. We know that $V_{2} (\mathbb{C}^{4})=(V_{2} (\mathbb{C}^{4})\backslash\n x_{0}\nn)\cup \mathrm{int} (D^{12} (x_{0}))$, where $D^{12} (x_{0})\xyd V_{2} (\mathbb{C}^{4})$ is a small disk containing $x_{0}$ of dimension 12. Then, the fact $\pi_{1} (V_{2} (\mathbb{C}^{4}))=0$ and the Van Kampen theorem imply $\pi_{1} (V_{2} (\mathbb{C}^{4})\backslash\n x_{0}\nn)=0.$ It is well-known that $V_{2} (\mathbb{C}^{4})$ is an orientable manifold. Then, making use of the  classical techniques from Hatcher  \cite[Section 3.3, p.~231]{xh}  as well as \cite[Theorem 3.26 (a), p.~236]{xh} to analyze  the exact sequence$$\cdots \rightarrow H_{i}\big(V_{2} (\mathbb{C}^{4})\backslash\n x_{0}\nn;\z\big)\rightarrow H_{i}\big(V_{2} (\mathbb{C}^{4});\z\big)\rightarrow H_{i}\big((V_{2} (\mathbb{C}^{4}),\;V_{2} (\mathbb{C}^{4})\backslash\n x_{0}\nn);\z\big)\rightarrow\cdots,$$ 
\noindent we see that 
$V_{2} (\mathbb{C}^{4})\backslash\n x_{0}\nn\simeq \s^{3}\cpt.$
\end{rem0}

Let us determine $\pi_{18} (\s^{3}\cpt)$ localized at 2. Recall that $\cpt=S^{2}\cup _{\ca_{2}}e^{4}$.

From now on, localize spaces at 2.

First, we introduce  notation to denote the spaces and maps. \[
\begin{tikzcd}[]
S^{15}\arrow[r,"j_{4}"] & G \arrow[r,"j_{3}"] & F^{(2)} \arrow[r,"j_{2}"] & F^{(3)} \arrow[r,"j_{1}"] & F\arrow[r,"j_{0}"] & \s^{3}\cpt
\end{tikzcd}
\]
\noindent In the above sequence,  the notation $F$, $\fe$, $\fs$, $G$, $j_{2}$,  $j_{3}$ and $j_{4}$ have the same meaning as those in Section \ref{secproofbeta} by taking $f=\ca_{4}\in\pi_{5} (S^{4})$ and $p=2$, (see the table in Subsection \ref{hbg1} for convenience). Additionally, let $j_{0}$ be the homotopy fiber inclusion, $j_{1}$ be the inclusion and recall that $j_{2.5}\colon   S^{5}\rightarrow\fe$ \noindent is the inclusion of the bottom cell.

Note that $[\e_{5},\ca_{5}]=[\e_{5},\e_{5}]\hc\ca_{9}=\nu_{5}\ca_{8}^{2}$. Hence \begin{equation}\label{jtdf}
  F=J(M_{S^{5}},S^{6})=(S^{5}\cup_{\nu_{5}\ca_{8}^{2}} e^{11})\cup_{\beta} e^{17}\cup e^{23}\cup\cdots.  
\end{equation} Additionally,
$G=J(M_{S^{5}},S^{10})=(S^{5}\vee S^{15})\cup e^{25}\cup\cdots$, (see Lemma~\ref{lpgfl}).

\begin{lem0} Under the above setup,  $\beta\ty j_{3}j_{4}\ca_{15}\m j_{2.5}\nu_{5}\cg_{8}\ca_{15},\,4j_{2.5}\ck_{5}.$ Moreover, $\nu_{5}\cg_{8}\ca_{15}=\nu_{5}\lt_{8}.$\end{lem0}

\begin{proof} We point out that the relation 
$\nu_{5}\cg_{8}\ca_{15}=\nu_{5}\lt_{8}$
is not completely  
given in $\hat{\mathrm{O}}$guchi's table \cite[pp.~103--105]{Og}, (it is one of the very few composition relations in Toda’s range that are not provided the table), but given in Toda's book \cite[p.~152]{Toda}.  By Theorem~\ref{hao1} and the result of the homotopy groups of spheres, we obtain the lemma. \end{proof}

We shall use the pinch map  fibrations \vspace{-0.5\baselineskip}\[G\stackrel{j_{3}}\longrightarrow\fe\stackrel{q_{2}}\longrightarrow S^{11},\;\;Z\stackrel{i_{3}'}\longrightarrow\fs\stackrel{q_{3}}\longrightarrow S^{17}\]\;\\\;\vspace{-2.2\baselineskip}\\ as well as $F\stackrel{j_{0}}\longrightarrow\s^{3}\cpt\stackrel{q}\longrightarrow S^{7}$ to determine $\pi_{18} (\s^{3}\cpt)$, where the homotopy fiber
$Z=J(M_{\fe},S^{16})$.

As mentioned earlier, when describing homotopy group isomorphisms together with chosen generators, we always assume that each generator corresponds to the indicated direct summand in the obvious way. 

The homotopy groups $\pi_{18} (\fe)$, $\pi_{18} (\fs)$ and some information of $\pi_{17} (\fs)$ are  required.
\begin{lem0}\label{cp2yb}\begin{itemize}
    \item [\rm(1)]The group $\pi_{18} (\fe)$ is given by \begin{align}
\pi_{18} (\fe) &= \x j_{2.5}\nu_{5}\cg_{8}\nu_{15},j_{3}j_{4}\nu_{15}, \mathrm{coext}_{\nu_{5}\ca_{8}^{2}} (2\cg_{10})\xx \notag \\
 &\tg  \z/2\jia\z/8\jia\z/16. \notag
\end{align}
\item [\rm(2)] The group $\pi_{18} (\fs)$ is given by \begin{align}
\pi_{18} (\fs) &= \x j_{2}j_{2.5}\nu_{5}\cg_{8}\nu_{15},j_{2}j_{3}j_{4}\nu_{15}, j_{2}\hc \mathrm{coext}_{\nu_{5}\ca_{8}^{2}} (2\cg_{10})\xx \notag \\
 &\tg \z/2\jia\z/4\jia\z/16. \notag
\end{align}
\item [\rm(3)] In $\pi_{17} (\fs)$, $j_{2}j_{2.5}\nu_{5}^{4}\neq0.$

\end{itemize}
 \end{lem0}
\begin{proof}  As explained at the beginning of this section, we shall only explain the essential parts of the proof, leaving the remaining details to the reader. \begin{itemize}
    \item [\rm(1)] Consider $\pa\colon\pi_{19} (S^{11})\rightarrow\pi_{18} (G)$. Recall that $j_{3.5}\colon S^{5}\rightarrow G$ is the inclusion of the bottom cell.
 We have   $\pa(\lt_{11})=j_{3.5}\nu_{5}\ca_{8}^{2}\hc\lt_{10}=j_{3.5}\nu_{5}\hc(\pm2\nu_{8}\cg_{10})=0$, and $\pa(\ch_{11})=j_{3.5}\nu_{5}\ca_{8}^{2}\hc\ch_{10}=j_{3.5}\nu_{5}\ca_{8}\hc\nu_{9}^{3}=0$. Then, $\pa(\pi_{19} (S^{11}))=0$ and $j_{3*}\colon\pi_{18} (G)\rightarrow\pi_{18} (\fe)$ is injective. Notice that $\nu_{5}\ca_{8}^{2}\cg_{10}$ is of order 2. Hence $$\mathrm{Ker} (\pa\colon \pi_{18} (S^{11})\rightarrow\pi_{17} (G))=\z/8\n2\cg_{11}\nn.$$
      Taking \vspace{-0.8\baselineskip} \[\mathrm{coext}_{\nu_{5}\ca_{8}^{2}} (2\cg_{10})\in\n j_{2.5},\nu_{5}\ca_{8}^{2},2\cg_{10}\nn\] we have $q_{2*}\mathrm{coext}_{\nu_{5}\ca_{8}^{2}} (2\cg_{10})=-2\cg_{11}$. The definition of $\mu_{n}$ (\cite[Lemma~6.5, p.~57]{Toda}) and the Jacobi identity (\cite[ii) of Equation (3.9), p.~33]{Toda}) imply  $$\n\ca_{9},2\cg_{10},8\e_{17}\nn\ty\mu_{9}\m \nu_{9}^{3},\ca_{9}\lt_{10},\cg_{9}\ca_{16}^{2}.$$

One may also use $\mu\in\x 8\cg,2\e,\ca \xx=\x \ca,2\e,8\cg\xx$ (\cite[p.~189]{Toda}) and $\Ker(\s^{\wq}\colon\pi_{18} (S^{9})\rightarrow\pi_{9}^{S} (S^{0}))=\z/2\n \cg_{9}\ca_{16}^{2}\nn$
to infer that $\n\ca_{9},2\cg_{10},8\e_{17}\nn\ty\mu_{9}\m \nu_{9}^{3},\ca_{9}\lt_{10},\cg_{9}\ca_{16}^{2}.$ Then we deduce that  for some $x_{1},x_{2},x_{3}\in\z,$
\begin{align}
-8(\mathrm{coext}_{\nu_{5}\ca_{8}^{2}} (2\cg_{10})) &\in j_{2.5}\hc\n\nu_{5}\ca_{8}^{2},2\cg_{10},8\e_{17}\nn \notag \\
 &\dyd j_{2.5}\nu_{5}\ca_{8}\hc\n \ca_{9},2\cg_{10},8\e_{17}\nn \notag \\
 &\ni j_{2.5}\nu_{5}\ca_{8}\hc(\mu_{9}+x_{1}\nu_{9}^{3}+x_{2}\ca_{9}\lt_{10}+x_{3}\cg_{9}\ca_{16}^{2}) \label{aa1} \\
 &= j_{2.5}\nu_{5}\ca_{8}\hc\mu_{9}.  \label{aa2} 
\end{align}

For the equality from Equation (\ref{aa1}) to Equation (\ref{aa2}), note that $$\nu_{5}\ca_{8}\nu_{9}^{3}=0,\;\;\nu_{5}\ca_{8}\ca_{9}\lt_{10}=\nu_{5}\hc(\pm 2\nu_{8}\cg_{11})=0,$$ $$\nu_{5}\ca_{8}\cg_{9}\ca_{16}^{2}=\nu_{5} (\s\cg'\hc\ca_{15}+\lt_{8}+\ch_{8})\ca_{16}^{2}=\nu_{5}\s\cg'\hc4\nu_{15}+\nu_{5}\hc(\pm 2\nu_{8}\cg_{11})+\nu_{5}\hc\nu_{8}^{3}\ca_{17}=0+0+0=0.$$ \noindent
The indeterminacy of $\n\nu_{5}\ca_{8}^{2},2\cg_{10},8\e_{17}\nn$ is
$$\mathrm{Ind}\n\nu_{5}\ca_{8}^{2},2\cg_{10},8\e_{17}\nn=\nu_{5}\ca_{8}^{2}\hc\x\lt_{10},\ch_{10}\xx=0.$$
\noindent
It follows that $8(\mathrm{coext}_{\nu_{5}\ca_{8}^{2}} (2\cg_{10}))=j_{2.5}\nu_{5}\ca_{8}\mu_{9}$ is of order 2.
    \item [\rm(2)] By Lemma~\ref{asd1}, we know that $\pi_{18} (S^{17})\stackrel{\pa}\rightarrow\pi_{17} (Z)$ is injective. We use the identification $\pi_{18} (Z)=\pi_{18} (\fe)$.
 Notice that $$\pa(\ca_{17}^{2})=\beta\hc\ca_{16}^{2}\ty j_{3}j_{4}\ca_{15}\hc\ca_{16}^{2}=4(j_{3}j_{4}\nu_{15})\m j_{2.5}\nu_{5}\cg_{8}\ca_{15}\hc\ca_{16}^{2},\; 4j_{2.5}\ck_{5}\hc\ca_{16}^{2},$$
and that
$\nu_{5}\cg_{8}\ca_{15}\hc\ca_{16}^{2}=\nu_{5}\cg_{8}\hc4\nu_{15}=0\in\pi_{18} (S^{5})$ as well as $4j_{2.5}\ck_{5}\hc\ca_{16}^{2}=0$. So, $\pa(\ca_{17}^{2})=4(j_{3}j_{4}\nu_{15})$.

    \item [\rm(3)] Let us first examine $\pi_{17} (\fe)$. Note that 
    $$\nu_{5}\ca_{8}^{2}\hc \cg_{10}=\nu_{5}\ca_{8}\hc (\lt_{9}+\ch_{9})=\nu_{5}\ca_{8}\lt_{9}+\nu_{5}\hc\nu_{8}^{3}=\nu_{5}\ca_{8}\lt_{9}+\nu_{5}^{4}.$$

Identify $\pi_{17} (S^{5}\vee S^{15})$ with $\pi_{17} (G)$ and consider $\pa\colon \pi_{18} (S^{11})\rightarrow\pi_{17} (G)$.  It follows  that $$\pa(\cg_{11})=j_{3.5}\nu_{5}\ca_{8}^{2}\hc\cg_{10}=j_{3.5} (\nu_{5}\ca_{8}\lt_{9}+\nu_{5}^{4})=(\nu_{5}\ca_{8}\lt_{9}+\nu_{5}^{4},0).$$
Then, $ j_{2.5}\nu_{5}^{4}\in\pi_{17} (\fe) $ is nonzero.
    \noindent Now,  examine $\pi_{17} (\fs)$. Consider $\pa\colon \pi_{18} (S^{17})\rightarrow\pi_{17} (Z)$ and identify $\pi_{17} (\fe)$ with $\pi_{17} (Z)$. 
We have $$\pa(\ca_{17})=\beta\hc\ca_{16}\ty j_{3}j_{4}\ca_{15}^{2}\m j_{2.5}\nu_{5}\lt_{8}\hc\ca_{16}=j_{2.5}\nu_{5}^{4}.$$
\noindent  Then, the restriction $\pi_{17} (j_{2})|_{    \mathrm{Im} (\pi_{17} (j_{2.5}))}$ is injective.
    \end{itemize}\end{proof} 
    Recall that $i_{3}'\colon  Z\rightarrow\fs$ is the homotopy fiber inclusion. Let  $i_{4}'\colon  \fe\hookrightarrow Z=J(M_{\fe},S^{16})$ be the canonical inclusion. Then, up to a unit in $\z_{(2)}$, we conclude that  $j_{2}=i_{3}'i_{4}'$ and that
$j_{2}j_{2.5}\colon   S^{5}\rightarrow \fs$ is  the inclusion of the bottom cell  by examining the $5$-th homology groups.
With these preparations, we are now able to compute the homotopy group $\pi_{18} (\s^{3}\cpt)$.
    
\begin{thm0}\label{tlq} The homotopy group $\pi_{18} (\s^{3}\cpt)$ is given as follows,
\begin{align}
\pi_{18} (\s^{3}\cpt) &= \x j_{0}j_{1}j_{2}j_{2.5}\nu_{5}\cg_{8}\nu_{15}  ,\;j_{0}j_{1}j_{2}j_{3}j_{4}\nu_{15},\; j_{0}j_{1}j_{2}\hc \mathrm{coext}_{\nu_{5}\ca_{8}^{2}} (2\cg_{10}),\;\mathrm{coext}_{\ca_{5}} (\ck_{6})\xx \notag \\
 &\tg \z/2\jia\z/4\jia\z/16\jia\z/8. \notag 
\end{align}
\end{thm0} 
\begin{proof}  The homomorphism $\pi_{18} (j_{0})$ is obviously injective since $\pi_{19} (S^{7})=0$. Consider $\mathrm{Ker} (\pi_{18} (S^{7})\stackrel{\pa}\longrightarrow\pi_{17} (F)).$  In $\pi_{17} (F)$, we have $\pa(\ck_{7})=(j_{1}j_{2}j_{2.5}\ca_{5})\hc\ck_{6}=0$ as $\ca_{5}\ck_{6}=0$.  Employing Lemma~\ref{cp2yb} (3) we obtain
$$\pa(\ch_{7}\nu_{15})=(j_{1}j_{2}j_{2.5}\ca_{5})\hc\ch_{6}\nu_{14}=j_{1}j_{2}j_{2.5}\nu_{5}^{4}\neq0.$$ \noindent Therefore,
$\mathrm{Ker} (\pi_{18} (S^{7})\stackrel{\pa}\longrightarrow\pi_{17} (F))=\z/8\n\ck_{7}\nn.$ Note that the composition 
$j_{0}j_{1}j_{2}j_{2.5}=i$ to a unit in $\z_{(2)}$, where $i\colon S^{5}\rightarrow \s^{3}\cpt$ denotes  the inclusion of the bottom cell.

Taking $\mathrm{coext}_{\ca_{5}} (\ck_{6})\in\n i,\ca_{5},\ck_{6}\nn$,  we have $q_{*}\mathrm{coext}_{\ca_{5}} (\ck_{6})=-\ck_{7}$. Then we deduce that 
\begin{align}
-8\mathrm{coext}_{\ca_{5}} (\ck_{6}) &\in i\hc\n \ca_{5},\ck_{6},8\e_{17}\nn \notag \\
 &\xyd i\hc\n \ca_{5},4\ck_{6},2\e_{17}\nn \notag \\
 &= i\hc\n \ca_{5},\ca_{6}^{2}\mu_{8},2\e_{17}\nn \notag \\
 &\dyd i\hc\n 4\nu_{5},\mu_{8},2\e_{17}\nn \notag \\
 &\dyd i\hc2\e_{5}\hc\n 2\nu_{5},\mu_{8},2\e_{17}\nn \notag \\
 &\dyd -i\hc2\e_{5}\hc\s\n 2\nu_{4},\mu_{7},2\e_{16}\nn \notag \\
 &= 0\m0. \notag 
\end{align}
Here, we use   $\ca_{5}\hc(\pm  P(\cg_{13}))=\ca_{5}\hc[\e_{6},\e_{6}]\cg_{11}=[\e_{5},\e_{5}]\ca^{2}_{9}\cg_{11}=\nu_{5}\ca_{8}^{3}\cg_{11}=\nu_{5}\hc4\nu_{8}\hc\cg_{11}=0$ to compute  the indeterminacy of $i\hc\n \ca_{5},\ca_{6}^{2}\mu_{8},2\e_{17}\nn$. This completes the proof.\end{proof}

\end{document}